\newtheorem{lemma}{Lemma}[section]
\newtheorem{theorem}[lemma]{Theorem}
\newtheorem{proposition}[lemma]{Proposition}
\theoremstyle{definition}
\newtheorem{definition}[lemma]{Definition}
\newtheorem{example}[lemma]{Example}
\newtheorem*{remark}{Remark}
\numberwithin{equation}{section}
\newcommand{\comment}[1]{}
\newcommand{\cA}{{\mathcal A}}
\newcommand{\cB}{{\mathcal B}}
\newcommand{\cE}{{\mathcal E}}
\newcommand{\R}{{\mathbb R}}
\newcommand{\N}{{\mathbb N}}
\newcommand{\D}{{\mathbb D}}
\newcommand{\supp}{{\mathrm {supp}\,}}
\newcommand{\as}[1]{\left\langle #1\right\rangle}
\newcommand{\ow}[1]{\widetilde{ #1}}
\newcommand{\Hm}[1]{\leavevmode{\marginpar{\tiny%
$\hbox to 0mm{\hspace*{-0.5mm}$\leftarrow$\hss}%
\vcenter{\vrule depth 0.1mm height 0.1mm width \the\marginparwidth}%
\hbox to 0mm{\hss$\rightarrow$\hspace*{-0.5mm}}$\\\relax\raggedright
#1}}}
\newcommand{\1}{\mathds{1}}
\newcommand{\Eph}{\hat{\mathcal{E}}_\varphi}
\newcommand{\Ep}{\mathcal{E}_\varphi}
\newcommand{\Ee}{\mathcal{E}_{\rm e}}
\newcommand{\Em}{\mathcal{E}^{(M)}}
\newcommand{\Ek}{\mathcal{E}^{(k)}}
\newcommand{\Ekh}{\hat{\mathcal{E}}^{(k)}}
\newcommand{\Er}{\mathcal{E}^{\rm ref}}
\newcommand{\Era}{\mathcal{E}^{\rm ref}_a}
\newcommand{\tom}{\overset{m}{\to}} 
\newcommand{\toq}{\overset{\rho}{\to}} 
\begin{document}

\title{A note on reflected Dirichlet forms}

\author[Schmidt]{Marcel Schmidt}
\address{M. Schmidt, Mathematisches Institut \\Friedrich Schiller Universit{\"a}t Jena \\07743 Jena, Germany } \email{schmidt.marcel@uni-jena.de}

\begin{abstract}
In this paper we give an algebraic construction of the (active) reflected Dirichlet form. We prove that it is the maximal Silverstein extension whenever the given form does not possess a killing part and we prove that Dirichlet forms need not have a maximal Silverstein extension if a killing is present. For regular Dirichlet forms we provide an alternative construction of the reflected process on a compactification (minus one point) of the underlying space.
\end{abstract}


\maketitle



\section{Introduction}

Reflected forms of regular Dirichlet forms have been introduced by Silverstein in \cite{Sil1} to study the boundary behavior of Markov processes. More precisely, it is an  important question to characterize all extensions of a given Markov process beyond its lifetime, whose sample paths show the same local behavior as the original process. In terms of the associated Dirichlet forms Silverstein observed in \cite{Sil,Sil2} that such processes are encoded by Silverstein extensions  (with an additional condition on the generator) of the Dirichlet form  of the given process and that the active\footnote{The adjective active refers to to the fact that it lives on the space of square integrable functions. In the literature the reflected Dirichlet form is a form on all functions and therefore NOT a Dirichlet form; the active reflected Dirichlet form is then the restriction of the reflected Dirichlet form to square integrable functions.  Since this can be a source of confusion, we shall write of the active reflected Dirichlet form and of the reflected form (here we drop the word Dirichlet).} reflected Dirichlet form  is  maximal among all such extensions in the sense of quadratic forms. The name reflected form comes from the following observation. For Brownian motion on an Euclidean domain (with smooth boundary) that is killed upon leaving the domain the process associated to the active reflected Dirichlet form is Brownian motion reflected at the boundary of the domain, see e.g. \cite{Fuk67}.

 Silverstein proposed two approaches for constructing the reflected forms, one by extending processes and one by extending forms. However, he did not show that both approaches coincide and it seems that the closedness of the reflected forms in his 'form approach' is missing.  These gaps were closed by Chen in \cite{Che} with probabilistic methods. In \cite{Kuw} Kuwae extended the form theoretic approach to defining reflected forms to quasi-regular Dirichlet forms and obtained an analytic proof of the closedness of the reflected form. A streamlined version of both constructions for quasi-regular Dirichlet forms can now be found in the textbook \cite{CF} by Chen and Fukushima.

The known approaches to defining reflected forms have in common that they are rather technical and need some regularity of the underlying space and the Dirichlet form. One relies on the Beurling-Deny decomposition of quasi-regular Dirichlet forms while the other uses characterizations of harmonic functions through the associated Markov process. It is the main goal of this note to give  an 'algebraic' construction of reflected Dirichlet forms, which works for all Dirichlet forms. In spirit, a similar approach has been recently used by Robinson in \cite{Rob} for inner regular local Dirichlet forms but his methods cannot treat the nonlocal case and also needs some regularity. Note also that Robinson does not use the name reflected Dirichlet form; one of his extremal forms is the reflected form.  

Besides  greater generality our new approach has some virtues that lead to new insights even for regular Dirichlet forms. It is claimed in \cite{CF,Kuw} that the active reflected Dirichlet form is always the maximal Silverstein extension of the given quasi-regular Dirichlet form. Unfortunately, the statement and the given proofs are only correct if the form does not have a killing, see Proposition~\ref{proposition:counterexample} for a counterexample. We construct the reflected form by splitting the given form into its main and its killing part and extending both parts to the maximal possible domain. The reflected form is then the sum of both extensions. We obtain that the active reflected Dirichlet form is the maximal Silverstein extension if the killing vanishes (as discussed previously in this case the proofs in \cite{CF,Kuw} are also correct) but additionally prove that the active main part is the maximal form whose resolvent dominates the resolvent of the given form, see Theorem~\ref{theorem:maximality active reflected form}. This seems to be a new observation and leads to the insight that for every  Dirichlet form there exists a maximal Dirichlet form whose resolvent dominates the one of the given  form. Moreover, our construction allows us to prove that continuous functions are dense in the domain of the active reflected Dirichlet form of a regular Dirichlet form, see Theorem~\ref{theorem:continous functions are dense}. This can then be used to construct the reflected process on a compactification (minus one point) of the underlying space, see Theorem~\ref{theorem:regularity active main part}. To the best of our knowledge this precise topological information on the space where the reflected process can live is new. Previous constructions only show that there exists a locally compact space on which the reflected process lives that contains a quasi-open subset which is quasi-homeomorphic to the given underlying space. 

As can already be seen  for quasi-regular Dirichlet forms, reflected forms leave the realm of Dirichlet forms. They are Markovian forms on all a.e. defined measurable functions and are lower semicontinuous with respect to a.e. convergence. In our construction of reflected forms quadratic forms of this type feature even more prominently. While it is possible to show that they are extended Dirichlet forms after a change of the underlying measure, it is more natural to to replace a.e. convergence by local convergence in measure and consider them instead as closed Markovian forms on the topological vector space $L^0(m)$, so called energy forms. For localizable measures energy forms have been introduced by the author in his PhD thesis \cite{Schmi}. There it is shown that they are a common generalization of   extended Dirichlet forms and resistance forms in the sense of Kigami \cite{Kig2}.  In this note we only deal with $\sigma$-finite measures but with two types of quadratic forms: Dirichlet forms and energy forms.

The paper is organized as follows. In Section~\ref{section:preliminaries} we discuss basics about Dirichlet forms and energy forms. In particular, we clarify the relation of Silverstein extensions and form domination. Section~\ref{section:reflected forms} is devoted to the construction and properties of reflected forms. In Section~\ref{section:regular} we apply the developed theory to regular Dirichlet forms. In Appendix~\ref{appendix:closed forms on l0} we discuss the basics of closed forms on metrizable topological vectors spaces while Appendix~\ref{appendix:monotone forms} contains a characterization of monotone forms.

\medskip

Section~\ref{section:reflected forms} and Appendix~\ref{appendix:closed forms on l0} are based on the author's PhD thesis \cite{Schmi}. 

\medskip 

{\bf Acknowledgements:} The author would like to thank Alexander Grigor'yan for his encouragement to write this article. Large parts of the text were written while the author was enjoying the hospitality of Jun Masamune at Hokkaido University Sapporo. The support of JSPS for this stay is gratefully acknowledged. Since the present text is based on the author's PhD thesis, it also owes greatly to discussions with his former advisor Daniel Lenz.

\section{Preliminaries}\label{section:preliminaries}

 Throughout the paper $(X,\mathcal{B},m)$ is a $\sigma$-finite measure space. The space of real-valued measurable $m$-a.e. defined functions on $X$ is denoted by $L^0(m)$. We equip it with the vector space topology of {\em local convergence in measure}.  Recall that a sequence $(f_n)$ {\em converges to $f$ locally in measure} (in which case we write $f_n \overset{m}{\to} f$) if and only if for all sets $U \in \mathcal{B}$ with $m(U)  < \infty$ we have
$$\int_U |f -f_n| \wedge 1 dm \to 0 \text{, as } n\to \infty.$$
Here,  for $f,g \in L^0(m)$ we use the notation $f \wedge g$ for the pointwise minimum of  $f$ and $g$, and $f \vee g$ for the pointwise maximum.  Since $m$ is $\sigma$-finite, the topology of local convergence in measure is metrizable and $f_n \tom f$ if and only if  each subsequence of $(f_n)$ has a subsequence that converges $m$-a.e. to $f$, see e.g. \cite[Proposition~245K]{Fre2}. In particular, a.e. convergent sequences are locally convergent in measure.

In what follows we shall be concerned with Dirichlet forms on $L^2(m)$ and closed Markovian forms  on $L^0(m)$, so-called energy forms. Closed forms on topological vector spaces other than $L^2(m)$  seem to be not so well-studied. Therefore, we include a short discussion about them in Appendix~\ref{appendix:closed forms on l0}. We also refer to the beginning of this appendix for the general terminology that we use for quadratic forms. For a background on Dirichlet forms see e.g. \cite{CF,FOT,MR}.

\subsection{Dirichlet forms and domination}

Let $\cE$ be a {\em Dirichlet form}, i.e., a densely defined closed Markovian quadratic form on $L^2(m)$. We write $\as{\cdot,\cdot}_\cE$ for the {\em form inner product}
$$\as{f,g}_\cE = \cE(f,g) + \as{f,g}_2 \text{ for } f,g \in D(\cE),$$
where $\as{\cdot,\cdot}_2$ is the ordinary $L^2$-inner product on $L^2(m)$. The {\em form norm } is  $\|\cdot\|_\cE := \as{\cdot,\cdot}_\cE^{1/2}$. Recall the following structure properties of the domains of Dirichlet forms, see e.g. \cite[Theorem~I.4.12]{MR}.  

\begin{lemma} \label{lemma:contraction properties}
 Let $\cE$ be a Dirichlet form. For $f,f_1,\ldots,f_n \in L^2(m)$ the  inequalities
$$|f(x)| \leq \sum_{k=1}^n|f_k(x)| \text{ and } |f(x)-f(y)| \leq \sum_{k = 1}^n  |f_k(x)-f_k(y)| \text{ for }  m\text{-a.e. } x,y \in X $$
 imply
$$\cE(f)^{1/2} \leq \sum_{k = 1}^n \cE(f_k)^{1/2}.$$
In particular, $D(\cE) \cap L^\infty(m)$ is an algebra and $D(\cE)$ is a lattice.
\end{lemma}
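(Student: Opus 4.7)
The plan is to reduce the inequality to the following multivariable version of the normal contraction principle for Dirichlet forms: if $F : \R^n \to \R$ satisfies $F(0) = 0$ and $|F(x) - F(y)| \leq \sum_{k=1}^n |x_k - y_k|$ for all $x, y \in \R^n$, then for $f_1, \ldots, f_n \in D(\cE)$ one has $F(f_1, \ldots, f_n) \in D(\cE)$ and $\cE(F(f_1, \ldots, f_n))^{1/2} \leq \sum_{k=1}^n \cE(f_k)^{1/2}$. I regard the proof of this multivariable principle as the main obstacle: while its single-variable version is essentially the defining Markovian property, the upgrade to several variables typically proceeds by approximating $\cE$ by its bounded Yosida resolvent forms (for which the inequality can be verified directly from sub-Markovianity of the resolvents) and then passing to the limit by closedness of $\cE$.

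Granted this principle, the heart of the argument is to realize $f$ as $F \circ (f_1, \ldots, f_n)$ for a suitable normal contraction $F$. Set $\phi := (f_1, \ldots, f_n) : X \to \R^n$. The second hypothesis forces $f(x) = f(y)$ whenever $\phi(x) = \phi(y)$, so $f$ factors $m$-a.e.\ as $f = F_0 \circ \phi$ for some $F_0 : \phi(X) \to \R$ that is $1$-Lipschitz with respect to the $\ell^1$-norm, and the first hypothesis reads $|F_0(z)| \leq \|z\|_1$ on $\phi(X)$. A McShane extension produces a $1$-Lipschitz function $\tilde F : \R^n \to \R$ extending $F_0$. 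I would then truncate by setting $F := (-G) \vee (\tilde F \wedge G)$, where $G(z) := \|z\|_1$ is itself $1$-Lipschitz; since lattice operations preserve the $1$-Lipschitz property, $F$ remains $1$-Lipschitz. By construction $F(0) = 0$ and $|F| \leq G$, and the bound $|F_0| \leq G$ on $\phi(X)$ forces $F|_{\phi(X)} = F_0$. Hence $f = F \circ \phi$ holds $m$-a.e., and the multivariable principle yields $\cE(f)^{1/2} \leq \sum_k \cE(f_k)^{1/2}$.

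The algebra and lattice statements then become direct applications of the main inequality. For $g_1, g_2 \in D(\cE)$ the elementary bounds $|g_1 \vee g_2|, |g_1 \wedge g_2| \leq |g_1| + |g_2|$ together with $|(g_1 \vee g_2)(x) - (g_1 \vee g_2)(y)| \leq |g_1(x) - g_1(y)| + |g_2(x) - g_2(y)|$ (and analogously for $\wedge$) reduce the lattice property to a single invocation of the lemma. For $g_1, g_2 \in D(\cE) \cap L^\infty(m)$ with $\|g_i\|_\infty \leq M$, the product satisfies $|g_1 g_2| \leq M(|g_1| + |g_2|)$ and $|(g_1 g_2)(x) - (g_1 g_2)(y)| \leq M(|g_1(x) - g_1(y)| + |g_2(x) - g_2(y)|)$, so applying the main inequality to $Mg_1, Mg_2$ in place of the $f_k$ delivers $g_1 g_2 \in D(\cE)$ with $\cE(g_1 g_2)^{1/2} \leq M(\cE(g_1)^{1/2} + \cE(g_2)^{1/2})$.
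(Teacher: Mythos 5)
Your proposal is correct, but note that the paper does not prove this lemma at all -- it simply cites \cite[Theorem~I.4.12]{MR} -- so you are supplying an argument where the paper supplies none. Your black-boxed multivariable contraction principle, proved via the resolvent approximations $\cE^{(\beta)}(u)=\beta\as{u-\beta G_\beta u,u}$ and sub-Markovianity, is exactly the standard route (and is in fact carried out in miniature inside the paper's own proof of Lemma~\ref{lemma:properties of eph}, where the coefficients $b_{ij}\geq 0$, $c_i\geq 0$ appear). The genuinely different ingredient in your write-up is the reduction of the ``dominated'' hypothesis to an explicit composition $f=F\circ(f_1,\dots,f_n)$: factoring through $\phi=(f_1,\dots,f_n)$, taking a McShane extension with respect to the $\ell^1$-metric, and truncating by $\pm\|z\|_1$ to force $F(0)=0$ while keeping $F=F_0$ on $\phi(X)$. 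This step checks out (lattice operations preserve the Lipschitz bound, and $|F_0|\leq G$ on the range guarantees the truncation does not disturb $F_0$). The classical proof in \cite{MR} instead handles the dominated form of the hypothesis directly, by writing $\cE^{(\beta)}$ as an integral against a symmetric kernel plus a killing term and applying Minkowski's inequality pointwise under the integral; your factorization buys a cleaner conceptual statement (a single $n$-variable normal contraction principle) at the cost of the extension argument. One point you pass over lightly is the meaning of ``for $m$-a.e.\ $x,y$'': to define $F_0$ on $\phi(X)$ you need the inequalities to hold for all $x,y$ outside a single null set, which requires a (standard but nontrivial) selection of representatives if the hypothesis is read as $m\otimes m$-a.e. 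Since the paper states the hypothesis in the same informal way, this is a presentational rather than a mathematical gap. The derivations of the lattice and algebra properties from the main inequality are correct as written.
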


Let $\cE$ and $\tilde{\cE}$ be Dirichlet forms on $L^2(m)$ and let $(G_\alpha)_{\alpha > 0}$, respectively $(\tilde G_\alpha)_{\alpha> 0}$, be the associated Markovian resolvents. We say that {\em $\tilde{\cE}$ dominates $\cE$} if $(\tilde G_\alpha)_{\alpha> 0}$ dominates $(G_\alpha)_{\alpha > 0}$, i.e., if for all $f \in L^2(m)$ and all $\alpha>0$ the inequality $|G_\alpha f| \leq \tilde{G}_\alpha |f|$ holds.

Domination of forms can be characterized by ideal properties of their domains. This is discussed next. For two subsets $I,S \subseteq L^0(m)$ we say that $I$ is an {\em order ideal in $S$} if $f \in S$, $g \in I$ and $|f| \leq |g|$ implies $f \in I$ and we say that $I$ is an {\em algebraic ideal in $S$} if $f \in S$, $g \in I$ implies $fg \in I$. 
\begin{lemma} \label{lemma:characterization of domination}
 Let $\cE$, $\tilde{\cE}$  Dirichlet forms on $L^2(m)$. The following assertions are equivalent.
 \begin{itemize}
  \item[(i)] $\cE$ is dominated by $\tilde \cE$.
  
  \item[(ii)]  $D(\cE) \subseteq D(\tilde \cE)$, $D(\cE)$ is an order ideal in $D(\tilde{\cE})$  and  
  $$\cE(f,g) \geq \tilde{\cE}(f,g)$$
  for all nonnegative $f,g \in D(\cE)$. 
  \item[(iii)] $D(\cE) \subseteq D(\tilde \cE)$, $D(\cE)\cap L^\infty(m)$ is an algebraic ideal in $D(\tilde{\cE}) \cap L^\infty(m)$ and  
  $$\cE(f,g) \geq \tilde{\cE}(f,g)$$
  for all nonnegative $f,g \in D(\cE)$. 
 \end{itemize}
\end{lemma}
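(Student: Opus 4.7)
My plan is to establish the cycle (ii)$\Rightarrow$(iii)$\Rightarrow$(i)$\Rightarrow$(ii). The equivalence (i)$\Leftrightarrow$(ii) is essentially Ouhabaz's characterisation of form domination specialised to Dirichlet forms, while (ii)$\Leftrightarrow$(iii) is a Dirichlet-form-specific bridge between order and algebraic ideals that rests on the algebra/lattice properties from Lemma~\ref{lemma:contraction properties}.

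For (ii)$\Rightarrow$(iii), the inclusion and form inequality are immediate. For the algebraic ideal property take $u\in D(\tilde\cE)\cap L^\infty(m)$ and $v\in D(\cE)\cap L^\infty(m)$. Since $v\in D(\tilde\cE)\cap L^\infty$, Lemma~\ref{lemma:contraction properties} applied to $\tilde\cE$ gives $uv\in D(\tilde\cE)\cap L^\infty$; the bound $|uv|\le\|u\|_\infty|v|$, the lattice inclusion $|v|\in D(\cE)$, and the order ideal hypothesis then force $uv\in D(\cE)$.

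For (iii)$\Rightarrow$(i), splitting $f=f_+-f_-$ reduces to showing $G_\alpha f\le\tilde G_\alpha f$ for nonnegative $f$. Set $k:=(G_\alpha f-\tilde G_\alpha f)_+$ and $d_-:=(\tilde G_\alpha f-G_\alpha f)_+$, so $k,d_-\ge0$ with $k\cdot d_-=0$ pointwise. Once the membership $k\in D(\cE)$ is secured, I would combine the Galerkin identities $\cE_\alpha(G_\alpha f,k)=\langle f,k\rangle=\tilde\cE_\alpha(\tilde G_\alpha f,k)$ with the form inequality $\cE_\alpha(G_\alpha f,k)\ge\tilde\cE_\alpha(G_\alpha f,k)$ on the nonnegative pair $(G_\alpha f,k)$ to deduce $\tilde\cE_\alpha(G_\alpha f-\tilde G_\alpha f,k)\le 0$, i.e.\ $\tilde\cE_\alpha(k,k)\le\tilde\cE(d_-,k)$. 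The latter is nonpositive by the standard Dirichlet-form fact that $\tilde\cE(g_1,g_2)\le 0$ for disjointly supported nonnegative $g_1,g_2\in D(\tilde\cE)$ (a pointwise sign check followed by integration against the jump/local/killing parts). Since $\tilde\cE_\alpha(k,k)\ge\alpha\|k\|^2$, this forces $k=0$. The membership $k\in D(\cE)$ is extracted from (iii) by approximating $k$ by products of $G_\alpha f\in D(\cE)\cap L^\infty$ with normal contractions of $G_\alpha f-\tilde G_\alpha f\in D(\tilde\cE)\cap L^\infty$, using Lemma~\ref{lemma:contraction properties} and closedness of $\cE$ to pass to the limit.

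For (i)$\Rightarrow$(ii), the Yosida identity $\cE(f)=\lim_{\alpha\to\infty}\alpha\langle f-\alpha G_\alpha f,f\rangle$ (value $+\infty$ off $D(\cE)$) combined with $G_\alpha f\le\tilde G_\alpha f$ on nonnegative $f$ yields $\cE(f)\ge\tilde\cE(f)$ and $f\in D(\tilde\cE)$, with the general case reducing via the lattice splitting $f=f_+-f_-$ from Lemma~\ref{lemma:contraction properties}. The order ideal portion follows Ouhabaz's sign-tracking argument verbatim. The main obstacle throughout is the membership step in (iii)$\Rightarrow$(i): extracting $k\in D(\cE)$ from only the algebraic ideal hypothesis demands a careful approximation with uniform energy bounds, since the naive cutoffs (with Lipschitz constants blowing up like $1/\eps$) do not deliver bounded form norms, and the absence of any order ideal structure at this stage blocks the usual lattice shortcut.
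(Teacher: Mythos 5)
Your cycle (ii)$\Rightarrow$(iii)$\Rightarrow$(i)$\Rightarrow$(ii) founders exactly where you say it does: the membership $k=(G_\alpha f-\tilde G_\alpha f)_+\in D(\cE)$ in the step (iii)$\Rightarrow$(i) is never established, and it is precisely the hard content of the lemma. A priori $k$ lies only in $D(\tilde\cE)$ (a lattice operation applied to $G_\alpha f\in D(\cE)\subseteq D(\tilde\cE)$ and $\tilde G_\alpha f\in D(\tilde\cE)$), and your proposed approximants --- ``products of $G_\alpha f$ with normal contractions of $G_\alpha f-\tilde G_\alpha f$'' --- neither converge to $k$ in any identified sense nor come with uniform energy bounds, as you yourself concede. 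The missing idea is the two-variable cutoff used in the paper's proof of (iii)$\Rightarrow$(ii): for $0\le u\le v$ with $v\in D(\cE)$ and $u\in D(\tilde\cE)$ bounded, set $C_\eps(u,v)=u\,H_\eps(v)$ with $H_\eps(y)=y/(y+\eps)$. Then $H_\eps(v)\in D(\cE)$ by Lemma~\ref{lemma:contraction properties}, so $C_\eps(u,v)\in D(\cE)$ by the algebraic ideal hypothesis; the point you are missing is that the \emph{joint} map $(x,y)\mapsto xy/(y+\eps)$ is $1$-Lipschitz in each variable on $\{0\le x\le y\}$ \emph{uniformly in} $\eps$, so $\tilde\cE(C_\eps(u,v))^{1/2}\le\tilde\cE(u)^{1/2}+\tilde\cE(v)^{1/2}$, while the difference is controlled by the monotonicity consequence $\cE(C_\eps(u,v))-\tilde\cE(C_\eps(u,v))\le\cE(v)-\tilde\cE(v)$ of the assumed form inequality (applied to the nonnegative pair $(v-C_\eps,\,v+C_\eps)$). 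Lower semicontinuity of $\cE$ then yields $u\in D(\cE)$. Applied with $u=k$, $v=G_\alpha f$ (after reducing to bounded nonnegative $f$), this repairs your step; without it, (iii)$\Rightarrow$(i) does not go through. Your worry about $\eps^{-1}$-Lipschitz constants is exactly the symptom of treating $H_\eps$ as a one-variable contraction of $v$ alone rather than estimating $C_\eps$ jointly.

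For comparison, the paper does not attempt (iii)$\Rightarrow$(i) directly: it imports (i)$\Leftrightarrow$(ii) from the literature on domination of semigroups (\cite{MVV}) and concentrates all the work in (iii)$\Rightarrow$(ii) via the $C_\eps$ device above, which shows that the algebraic ideal property upgrades to the order ideal property. Your resolvent argument for (iii)$\Rightarrow$(i) is a reasonable alternative skeleton --- the chain $\alpha\|k\|_2^2\le\tilde\cE_\alpha(k)\le\tilde\cE(d_-,k)\le0$ is sound once $k\in D(\cE)$ is secured, and your (ii)$\Rightarrow$(iii) matches the paper's --- but it does not bypass the $C_\eps$ estimate; it merely relocates it, and as written that estimate is absent.
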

\begin{proof}
 (i) $\Leftrightarrow$ (ii): This follows from \cite[Corollary~4.3]{MVV}, where instead of the resolvents the associated semigroups are considered. It is readily verified that domination of the associated resolvents is equivalent to domination of the associated semigroups.
 
 (ii) $\Rightarrow$ (iii): Let $f \in D(\tilde{\cE}) \cap L^\infty(m)$ and let $g \in D(\cE)\cap L^\infty(m)$. We obtain $|fg| \leq \|f\|_\infty |g|$. Since $D(\cE)$ is an order ideal in $D(\tilde \cE)$, this implies $fg \in D(\cE)$.
 
 (iii) $\Rightarrow$ (ii): Let $f \in D(\tilde \cE)$ and let $g \in D(\cE)$ with $|f| \leq |g|$. Since domains of Dirichlet forms are lattices, we can assume $0 \leq f \leq g$. Moreover, since $f \wedge n \to  f$, $g \wedge n \to g$, as $n \to \infty$, with respect to the corresponding form norms, see e.g. \cite[Theorem~1.4.2]{FOT}, we can further assume that $f$ and $g$ are bounded.
 
 Let $A:= \{(x,y) \in \R^2 \mid 0 \leq x \leq y\}$,   $\varepsilon > 0$ and consider
 $$C_\varepsilon:A \to \R,\, C_\varepsilon(x,y) := x \frac{y}{y+\varepsilon}.$$
 For $(x_i,y_i) \in A$, $i=1,2$, it satisfies
 $$|C_\varepsilon(x_1,y_1) - C_\varepsilon(x_2,y_2)| \leq |x_1 - x_2| + |y_1 - y_2|$$
 and $C_\varepsilon(0,0) = 0$. Since $0\leq f \leq g$, we also have 
 $$C_\varepsilon(f,g) = f \frac{g}{g + \varepsilon} \to f \text{ in } L^2(m),\text{ as }\varepsilon \to 0+,$$
 and the $L^2$-lower semicontinuity of $\cE$ implies
 $$\cE(f) \leq \liminf_{\varepsilon\to 0+} \cE(C_\varepsilon(f,g)).$$
 Therefore, it suffices to prove that the right-hand side of this inequality is bounded independently of $\varepsilon$. 
 
 The function $H_\varepsilon:[0,\infty) \to \R, H_\varepsilon(y) = y(y+\varepsilon)^{-1}$ is $\varepsilon^{-1}$-Lipschitz and satisfies $H_\varepsilon(0) = 0$, such that  $H_\varepsilon(g) \in D(\cE)$ by  Lemma~\ref{lemma:contraction properties}.  Since $D(\cE)\cap L^\infty(m)$ is an algebraic ideal in $D(\tilde{\cE}) \cap L^\infty(m)$, this implies $C_\varepsilon(f,g)  = f H_\varepsilon(g)\in D(\cE)$ for all $\varepsilon > 0$. The inequality between $\cE$ and $\tilde{\cE}$ for nonnegative functions in $D(\cE)$ and the inequalities $0 \leq C_{\varepsilon}(f,g) \leq f \leq g$  then show
 \begin{align*}
  \cE(C_\varepsilon(f,g)) &= \tilde \cE(C_\varepsilon(f,g)) + \cE(C_\varepsilon(f,g))   - \tilde \cE(C_\varepsilon(f,g)) \\
  &\leq  \tilde \cE(C_\varepsilon(f,g)) + \cE(g) - \tilde \cE(g).
 \end{align*}
 Moreover, the properties of $C_\varepsilon$ and Lemma~\ref{lemma:contraction properties} imply
 $$\tilde  \cE(C_\varepsilon(f,g))^{1/2} \leq \tilde \cE (f)^{1/2} + \tilde \cE (g)^{1/2}.$$
 Altogether we obtain that $\cE(C_\varepsilon(f,g))$ is bounded independently of $\varepsilon$. This finishes the proof.
\end{proof}
\begin{remark}
 For the case when $\tilde \cE$ is an extension of $\cE$ the author learned the presented proof of (iii) $\Rightarrow$ (ii) in a discussion with Peter Stollmann and Hendrik Vogt in December 2012. Independently, a different proof for the  lemma was  recently given in \cite{Rob} in the somewhat more restrictive setting that $X$ is a locally compact separable metric space and $m$ is a Radon measure of full support on $X$. 
\end{remark}
If the quadratic form $\tilde \cE$ is an extension of $\cE$, then $D(\cE) \subseteq D(\tilde \cE)$ and the inequality $\cE(f,g) \geq \tilde \cE(f,g)$ for nonnegative $f,g \in D(\cE)$ are trivially satisfied. If, in this case, the form $\tilde \cE$ satisfies any of the three equivalent conditions of the previous lemma, it is called {\em Silverstein extension} of $\cE$. It is one main goal of this paper to construct for a given Dirichlet form the maximal Dirichlet form that dominates it and, if possible, to also construct its maximal Silverstein extension.  
\begin{remark}
 Silverstein extensions play an important rôle in the study of the boundary behavior of symmetric Markov processes. Their study was initiated in the books \cite{Sil,Sil2}. A modern treatment can be found in \cite{CF}.
\end{remark}
\subsection{Energy forms and extended Dirichlet spaces} 

As discussed in Appendix~\ref{appendix:closed forms on l0}, a quadratic form $E$ on $L^0(m)$ is called {\em closed} if it is lower semicontinuous with respect to local convergence in measure, i.e., if for all sequences $(f_n)$ in $L^0(m)$ and all $f \in L^0(m)$ the convergence $f_n \tom f$ implies
$$E(f) \leq \liminf_{n\to \infty} E(f_n).$$

A closed quadratic form $E$ on $L^0(m)$ is called {\em energy form} if it is {\em Markovian}, i.e., if for each normal contraction $C:\R \to \R$ and all $f \in L^0(m)$ we have
$$E(C \circ f) \leq E(f).$$
Clearly, the restriction of an energy form to $L^2(m)$ is a (not necessarily densely defined) Dirichlet form on $L^2(m)$ as its $L^2$-lower semicontinuity follows from the continuity of the embedding $L^2(m) \hookrightarrow L^0(m)$. The theory of extended Dirichlet forms shows that also the 
opposite way is possible.

A Dirichlet form $\cE$ on $L^2(m)$ can be   considered to be a quadratic form on $L^0(m)$ by letting $\cE(f) = \infty$ for $f \in L^0(m) \setminus L^2(m)$. The next lemma shows that this form is closable and its closure is an energy form, the so-called {\em extended Dirichlet form}, which we denote by $\Ee$.  
\begin{lemma}
 Every Dirichlet form on $L^2(m)$ is closable on $L^0(m)$. Its closure $\Ee$ is an energy form that is given by
 $$\Ee:L^0(m) \to [0,\infty],\, \Ee(f):= \begin{cases}
                 \lim\limits_{n \to \infty} \cE(f_n) &\text{if } (f_n) \text{ is }\cE\text{-Cauchy with } f_n \tom f,\\
                 \infty &\text{if there exists no such sequence}.
                \end{cases} $$
\end{lemma}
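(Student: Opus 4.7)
My plan is to reduce the proof to one classical fact from Dirichlet form theory: if $(h_n) \subset D(\cE)$ is $\cE$-Cauchy (in the sense that $\cE(h_n - h_m) \to 0$) and $h_n \to 0$ $m$-a.e., then $\cE(h_n) \to 0$; see e.g.\ \cite[Theorem~1.1.5]{FOT} or \cite[Theorem~1.1.5]{CF}. Since a sequence in $L^0(m)$ converges locally in measure iff every subsequence has an a.e.\ convergent sub-subsequence, the same conclusion holds when a.e.\ convergence is replaced by $\tom$. This is precisely the closability of $\cE$ (extended by $\infty$ on $L^0(m) \setminus L^2(m)$) as a quadratic form on the topological vector space $L^0(m)$, in the sense of Appendix~\ref{appendix:closed forms on l0}.

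Given closability, I would invoke the abstract theory of closures of closable forms on metrizable topological vector spaces (Appendix~\ref{appendix:closed forms on l0}) to produce a closure $\overline{\cE}$ that is automatically lower semicontinuous on $L^0(m)$, and then show it admits the description in the statement. Indeed, for two $\cE$-Cauchy sequences $(f_n), (g_n) \subset D(\cE)$ both converging locally in measure to the same $f$, their difference $(f_n - g_n)$ is $\cE$-Cauchy and $\tom 0$, so the key lemma gives $\cE(f_n - g_n) \to 0$, and polarization yields $\lim_n \cE(f_n) = \lim_n \cE(g_n)$. Thus the value $\lim_n \cE(f_n)$ is well-defined and agrees with $\overline{\cE}(f)$; taking $f_n \equiv f$ for $f \in D(\cE)$ further gives $\overline{\cE}|_{D(\cE)} = \cE$. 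I then set $\Ee := \overline{\cE}$.

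For Markovianity I would fix a normal contraction $C : \R \to \R$ and $f \in L^0(m)$ with $\Ee(f) < \infty$, and choose an $\cE$-Cauchy sequence $(f_n) \subset D(\cE)$ with $f_n \tom f$ and $\cE(f_n) \to \Ee(f)$. Since $C(0) = 0$ and $C$ is $1$-Lipschitz one has $|C \circ f_n| \leq |f_n|$, whence $C \circ f_n \in L^2(m)$; Lemma~\ref{lemma:contraction properties} applied with the single summand $f_n$ then puts $C \circ f_n$ in $D(\cE)$ with $\cE(C \circ f_n) \leq \cE(f_n)$. Continuity of $C$ and the subsequence characterization of $\tom$ yield $C \circ f_n \tom C \circ f$, so $L^0(m)$-lower semicontinuity of $\Ee$ together with $\Ee|_{D(\cE)} = \cE$ gives
\[
\Ee(C \circ f) \;\leq\; \liminf_{n \to \infty} \Ee(C \circ f_n) \;=\; \liminf_{n \to \infty} \cE(C \circ f_n) \;\leq\; \lim_{n \to \infty} \cE(f_n) \;=\; \Ee(f).
\]

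The main obstacle will be the classical closability lemma cited at the outset. Its proof is the technical heart of extended Dirichlet form theory, relying on unit contractions together with a Banach--Saks-type argument in $(D(\cE), \as{\cdot, \cdot}_\cE)$ to squeeze $L^2$-convergence out of $m$-a.e.\ convergence in the $\sigma$-finite setting. I would not reprove it here but cite it and concentrate the actual work on the $L^0(m)$-reformulation described above.
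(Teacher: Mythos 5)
Your Markovianity argument is exactly the one in the paper, and your overall architecture (reduce to a known fact from extended Dirichlet space theory, then run it through the abstract closure machinery of Appendix~\ref{appendix:closed forms on l0}) is sound. The one genuine gap is the sentence ``This is precisely the closability of $\cE$ \dots in the sense of Appendix~\ref{appendix:closed forms on l0}.'' It is not, as the appendix is written. Lemma~\ref{lemma:characterization closability} characterizes closability by \emph{lower semicontinuity of $\cE$ on its domain}: for every sequence $(f_n)$ in $D(\cE)$ and every $f \in D(\cE)$ with $f_n \tom f$ --- with no Cauchy assumption on $(f_n)$ --- one must show $\cE(f) \leq \liminf_n \cE(f_n)$. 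The fact you cite ($(h_n)$ $\cE$-Cauchy and $h_n \tom 0$ implies $\cE(h_n)\to 0$) only yields well-definedness of the candidate closure; it does not by itself verify the hypothesis of Lemma~\ref{lemma:characterization closability}, so you cannot ``invoke the abstract theory'' at that point. The two conditions are in fact equivalent, but the implication you need (Cauchy condition $\Rightarrow$ lower semicontinuity on the domain) requires an extra step you have not supplied: given $(f_n)$ in $D(\cE)$ with $\sup_n\cE(f_n)<\infty$ and $f_n \tom f \in D(\cE)$, pass via the Banach--Saks theorem and Lemma~\ref{lemma:convergent cesaro means} to a subsequence whose Ces\`aro means $g_N$ are $\cE$-Cauchy and still converge to $f$ locally in measure; then $(g_N - f)$ is $\cE$-Cauchy with $g_N - f \tom 0$, your cited fact gives $\cE(g_N - f)\to 0$, and hence $\cE(f) = \lim_N \cE(g_N) \leq \liminf_n \cE(f_n)$. (Alternatively, one can verify completeness of the candidate domain in the form metric and appeal to Lemma~\ref{lemma:completeness v.s. lower semicontinuity}.) Either way a bridge must be built, and the tools are in the appendix, but your proposal skips it.

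For comparison, the paper's proof sidesteps this entirely by citing a different known result, namely Schmuland's theorem from \cite{Schmu2}: $\cE(f) \leq \liminf_n \cE(f_n)$ whenever $f_n, f \in D(\cE)$ and $f_n \to f$ $m$-a.e. After reducing local convergence in measure to a.e.\ convergence along a subsequence, this is \emph{exactly} the hypothesis of Lemma~\ref{lemma:characterization closability}, so both closability and the displayed formula for $\Ee$ come out in one stroke. Your route, once the Ces\`aro-mean bridge above is inserted, is also valid and rests on the most classical statement of extended Dirichlet space theory rather than on \cite{Schmu2}.
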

\begin{proof}
For proving closability and the formula for $\Ee$ we employ Lemma~\ref{lemma:characterization closability}. Thus, we need to show that $\cE$ is lower semicontinuous with respect to local convergence in measure on its domain. To this end, let $(f_n)$ a sequence in $D(\cE)$ and let $f \in D(\cE)$ with $f_n \tom f$. Without loss of generality we can assume $\liminf_{n \to \infty} \cE(f_n) < \infty$ and by passing to a suitable subsequence we can further assume $f_n \to f$ $m$-a.e. It then follows from the main result of \cite{Schmu2} that  $\cE(f) \leq \liminf_{n \to \infty} \cE(f_n).$ For a simplified proof see \cite[Theorem~1.59]{Schmi}. 

It remains to show that $\Ee$ is Markovian. Let $C:\R \to \R$ a normal contraction and let $f \in D(\Ee)$. We choose an $\cE$-Cauchy sequence $(f_n)$ in $D(\cE)$ with $f_n \tom f$. The lower semicontinuity of $\Ee$,  the Markov property of $\cE$ and the fact that $\Ee$ extends $\cE$ yield
$$\Ee(C \circ f) \leq \liminf_{n\to \infty} \Ee(C \circ f_n)  =   \liminf_{n\to \infty} \cE(C \circ f_n) \leq  \liminf_{n\to \infty} \cE( f_n) = \Ee(f).$$
This finishes the proof.
\end{proof}
It follows  from the previous lemma and the characterization of local convergence in measure that the domain of $\Ee$ coincides with the classical extended Dirichlet space, i.e.,
$$D(\Ee) = \{f \in L^0(m) \mid \text{ there ex.  $\cE$-Cauchy sequence } (f_n) \text{ in } D(\cE) \text{ with } f_n \to f\, m\text{-a.e.}\}.$$
In particular, $D(\Ee) \cap L^2(m)$ = $D(\cE)$, see e.g. \cite[Theorem~1.1.5]{CF}.
 
 \begin{remark}
  We included the previous lemma because it seems that the lower semicontinuity of $\Ee$ on $L^0(m)$ is not contained in the literature. We could only find the inequality 
  $$\Ee(f) \leq \liminf_{n\to \infty} \Ee(f_n) $$
   for sequences $(f_n)$ with $f_n \to  f$ $m$-a.e. (which is almost the same as local convergence in measure) under the additional assumption that either $f \in D(\Ee)$, see \cite[Corollary~1.9]{CF}, or $f_n \in D(\cE)$, see \cite[Lemma~2]{Schmu2}. In contrast, the Markov property of $\Ee$ is contained in the literature, see e.g. \cite[Theorem~1.1.5]{CF}. It seems that our proof, which makes direct use of lower semicontinuity, is a bit simpler.  
 \end{remark}
 
 \begin{remark}
Energy forms have been introduced in \cite{Schmi} on localizable measure spaces.  It can  be proven that for a given $\sigma$-finite measures $m$,  any energy form on $L^0(m)$ is an extended Dirichlet form of a not necessarily densely defined Dirichlet form (a so-called Dirichlet form in the wide sense). However, the measure $m$ needs to be changed to a suitable equivalent finite measure $m'$ on $X$ for which $L^0(m) = L^0(m')$ as topological vector spaces (e.g. one can take  $m' = g \cdot m$ for some strictly positive $g \in L^1(m)$). For details see \cite[Proposition~3.7]{Schmi}. Below we will  construct different energy forms on $L^0(m)$. Since it would be somewhat artificial and technically more complicated to consider them as extended Dirichlet forms with respect to a changed measure, we will directly work in the category of energy forms. Note also that energy forms on non-$\sigma$-finite measure spaces need not be extended Dirichlet forms. For example, resistance forms in the sense of Kigami \cite{Kig2} are energy forms on a set equipped with the counting measure, see the discussion in \cite[Subsection~2.1.3]{Schmi}. 
 \end{remark}
It follows from the previous remark that the statements of Lemma~\ref{lemma:contraction properties} are also true for energy forms, see also \cite[Theorem~2.20]{Schmi} for a direct proof. We only need the following consequences which can be proven directly.
\begin{lemma}\label{lemma:bounded approximation}
 Let $E$ be an energy form on $L^0(m)$. Let $f \in D(E)$ and for $\alpha > 0$ let $f^{(\alpha)} := (f\wedge \alpha)\vee(-\alpha)$. Then $f^{(\alpha)} \in D(E)$ and the following convergence statements hold true.
 \begin{itemize}
  \item[(a)]  $E(f^{(n)} - f)  \to $, as $n\to \infty$. 
  \item[(b)] Let $(f_n)$ a sequence in $D(E)$ with $f_n \tom f$ and $E(f_n - f)\to 0$, as $n \to \infty$. Then, for every $\alpha \geq \|f\|_\infty$, we have $f^{(\alpha)}_n \tom f$ and $E(f^{(\alpha)}_n - f) \to 0$, as $n \to \infty$.
  
 \end{itemize}
\end{lemma}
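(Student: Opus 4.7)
The plan is to exploit the parallelogram identity for the quadratic form $E$ together with its lower semicontinuity under $\tom$ and the Markov property. First, that $f^{(\alpha)} \in D(E)$ is immediate: the truncation $t \mapsto (t\wedge\alpha)\vee(-\alpha)$ is a normal contraction, so the Markov property gives $E(f^{(\alpha)}) \leq E(f) < \infty$.

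For (a), I would begin by noting that $f^{(n)} \to f$ pointwise $m$-a.e., hence $f^{(n)} \tom f$. The Markov property gives $E(f^{(n)}) \leq E(f)$, while lower semicontinuity gives $E(f) \leq \liminf_n E(f^{(n)})$, so $E(f^{(n)}) \to E(f)$. Since also $f^{(n)}+f \tom 2f$, lower semicontinuity yields $4E(f) = E(2f) \leq \liminf_n E(f^{(n)}+f)$. The parallelogram identity
$$E(f^{(n)}-f) \;=\; 2E(f^{(n)}) + 2E(f) - E(f^{(n)}+f)$$
then forces $\limsup_n E(f^{(n)}-f) \leq 2E(f)+2E(f)-4E(f) = 0$, which is the claim since $E \geq 0$.

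For (b), fix $\alpha \geq \|f\|_\infty$ so that $f^{(\alpha)} = f$. Because truncation is $1$-Lipschitz one has the pointwise bound $|f_n^{(\alpha)} - f| = |f_n^{(\alpha)} - f^{(\alpha)}| \leq |f_n - f|$, which immediately upgrades $f_n \tom f$ to $f_n^{(\alpha)} \tom f$. The Markov property gives $E(f_n^{(\alpha)}) \leq E(f_n)$, and the reverse triangle inequality for the seminorm $E(\cdot)^{1/2}$ on $D(E)$, applied to the hypothesis $E(f_n - f) \to 0$, yields $E(f_n) \to E(f)$. Combining $\limsup_n E(f_n^{(\alpha)}) \leq E(f)$ with the reverse inequality from lower semicontinuity gives $E(f_n^{(\alpha)}) \to E(f)$. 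The same parallelogram argument as in (a), now applied to $f_n^{(\alpha)}$ and $f$ and using $f_n^{(\alpha)}+f \tom 2f$, concludes the proof.

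The only genuinely substantive ingredient is the parallelogram trick that upgrades the soft information (scalar convergence $E(f_n^{(\alpha)}) \to E(f)$ together with $\tom$-convergence) into convergence in the form seminorm; everything else is bookkeeping with the Markov property and the observation that truncation is simultaneously a normal contraction (which controls $E$) and pointwise $1$-Lipschitz (which controls $\tom$-convergence). Validity of the parallelogram identity for $E$ is automatic since $E$ is a quadratic form.
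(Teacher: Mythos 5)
Your proof is correct and follows essentially the same route as the paper: the Markov property gives $\limsup_n E(f_n^{(\alpha)})\leq E(f)$, lower semicontinuity gives the reverse inequality, and the remaining Radon--Riesz-type step (soft convergence plus convergence of the form values implies convergence in the form seminorm) is exactly what the paper delegates to Lemma~\ref{lemma:existence of a weakly convergent subnet}, which you simply re-derive in the special case needed here via the parallelogram identity applied to $f_n^{(\alpha)}+f\tom 2f$. The only point worth flagging is immaterial: the statement of (a) has a typo (``$E(f^{(n)}-f)\to$'' should read ``$\to 0$''), which you correctly interpret as convergence to zero.
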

\begin{proof}
 (a) + (b): All statements are consequences of the Markov property of $E$ (applied to the normal contraction $\R \to \R, x\mapsto (x \wedge \alpha) \vee(-\alpha)$), its lower semicontinuity and Lemma~\ref{lemma:existence of a weakly convergent subnet}.
\end{proof}

\begin{lemma}\label{lemma:algebraic properties}
 Let $E$ be an energy form. Then $D(E)$ a lattice and for every $f,g \in D(E)$ we have
 $$E(f\wedge g)^{1/2} \leq E(f)^{1/2} + E(g)^{1/2} \text{ and } E(f\vee g)^{1/2} \leq E(f)^{1/2} + E(g)^{1/2}.$$
 Moreover, $D(E) \cap L^\infty(m)$ is an algebra and for every $f,g \in D(E)\cap L^\infty(m)$ the inequality
  $$E(fg)^{1/2} \leq \|f\|_\infty E(g) + \|g\|_\infty E^{1/2}(f)$$
  holds.
\end{lemma}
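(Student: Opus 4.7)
For the lattice statement, the plan is to use the polarization identities
$$f\wedge g = \tfrac12\bigl(f+g-|f-g|\bigr), \qquad f\vee g = \tfrac12\bigl(f+g+|f-g|\bigr).$$
Since $E$ is quadratic, $D(E)$ is a vector subspace of $L^0(m)$, so $f\pm g\in D(E)$. The map $x\mapsto|x|$ is a normal contraction, so by Markovianity $|f-g|\in D(E)$ with $E(|f-g|)\le E(f-g)$. Hence $f\wedge g, f\vee g\in D(E)$, and since $E^{1/2}$ is a seminorm on $D(E)$, the triangle inequality yields
$$E(f\wedge g)^{1/2}\le \tfrac12 E(f+g)^{1/2}+\tfrac12 E(|f-g|)^{1/2}\le \tfrac12 E(f+g)^{1/2}+\tfrac12 E(f-g)^{1/2}\le E(f)^{1/2}+E(g)^{1/2},$$
with the analogous chain for $f\vee g$.

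For the algebra statement, I would first prove that $h^2\in D(E)$ whenever $h\in D(E)\cap L^\infty(m)$. Using the lattice part (applied to $h$ and $0$), split $h = h_+ - h_-$ with $h_\pm\in D(E)\cap L^\infty(m)$ and $h_+ h_- = 0$ pointwise. For $M\ge\|h\|_\infty$ consider the normal contraction
$$\phi_M(x) := \tfrac{1}{2M}\bigl((x\vee0)\wedge M\bigr)^2,$$
which is easily checked to be $1$-Lipschitz, to vanish at $0$, and to satisfy $|\phi_M(x)|\le|x|$. Since $\phi_M\circ h_\pm = h_\pm^2/(2M)$, Markovianity forces $h_\pm^2\in D(E)$, and from $h^2 = h_+^2 + h_-^2$ we conclude $h^2\in D(E)$. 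Polarizing through $fg = \tfrac14\bigl((f+g)^2-(f-g)^2\bigr)$ then delivers $fg\in D(E)$ for all $f,g\in D(E)\cap L^\infty(m)$.

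The delicate point — and what I expect to be the main obstacle — is securing the sharp constant in the product estimate, since naive polarization only produces a bound with $(\|f\|_\infty+\|g\|_\infty)$ times $(E(f)^{1/2}+E(g)^{1/2})$ and an extra absolute constant. To hit the asserted inequality I would invoke the contraction principle for energy forms (the analogue of Lemma~\ref{lemma:contraction properties} granted by the preceding remark) with the choices $h := fg$, $h_1 := \|f\|_\infty g$ and $h_2 := \|g\|_\infty f$. The pointwise bound $|h(x)|\le\|f\|_\infty|g(x)| = |h_1(x)|\le|h_1(x)|+|h_2(x)|$ is immediate, and the identity
$$f(x)g(x)-f(y)g(y) = f(x)\bigl(g(x)-g(y)\bigr) + g(y)\bigl(f(x)-f(y)\bigr)$$
yields
$$|h(x)-h(y)|\le\|f\|_\infty|g(x)-g(y)|+\|g\|_\infty|f(x)-f(y)| = |h_1(x)-h_1(y)|+|h_2(x)-h_2(y)|.$$
The contraction principle then gives exactly $E(fg)^{1/2}\le E(h_1)^{1/2}+E(h_2)^{1/2} = \|f\|_\infty E(g)^{1/2}+\|g\|_\infty E(f)^{1/2}$, completing the proof.
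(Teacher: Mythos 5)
Your lattice argument is exactly the paper's: the identity $f\wedge g=\tfrac12(f+g-|f-g|)$, Markovianity applied to $x\mapsto|x|$, and the triangle inequality for the seminorm $E^{1/2}$. For the \emph{Moreover} part the two proofs genuinely diverge. The paper picks a strictly positive $h\in L^1(m)$, passes to the finite measure $m'=h\cdot m$, observes that the restriction of $E$ to $L^2(m')$ is a (not necessarily densely defined) Dirichlet form containing all bounded elements of $D(E)$, and then simply quotes the classical algebra property with the same product estimate from Fukushima--Oshima--Takeda. You instead stay on $L^0(m)$: you get $h^2\in D(E)$ for bounded $h$ from the normal contraction $\phi_M$ applied to $h_\pm$, polarize to get $fg\in D(E)$, and then obtain the sharp constant from the multi-function contraction principle (the energy-form analogue of Lemma~\ref{lemma:contraction properties}) applied to $h=fg$, $h_1=\|f\|_\infty g$, $h_2=\|g\|_\infty f$. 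Both steps are correct; note only that your final appeal to the contraction principle already yields $fg\in D(E)$ by itself, so the $\phi_M$/polarization paragraph is redundant, and that this contraction principle is something the paper merely asserts for energy forms (via the measure-change remark and the reference to the author's thesis) rather than proves --- its own justification runs through essentially the same reduction to a Dirichlet form on $L^2(m')$ that the paper's proof performs explicitly. So your route is more intrinsic to $L^0(m)$ but leans on a stronger unproved-in-this-paper ingredient, while the paper's route is shorter and only uses the standard $L^2$ Dirichlet form toolbox.
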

\begin{proof}
  The first statement follows from the identity $f \wedge g = \frac{f + g - |f-g|}{2}$ and the fact that $\R \to \R,x \mapsto |x|$ is a normal contraction. $f \vee g$ can be treated similarly. 

 For the 'Moreover'-statement let $h \in L^1(m)$ with $h> 0$ $m$-a.e. and set $m' = h \cdot m$. Then the embedding $L^2(m') \hookrightarrow L^0(m)$ is continuous and so the restriction of $E$ to $L^2(m')$, which we denote by $\cE$, is a (not necessarily densely defined) Dirichlet form. Since $h \in L^1(m)$ and $f,g$ are bounded, we have $f,g \in L^2(m')$ and therefore $f,g \in D(\cE)$. Now the statement follows from \cite[Theorem~1.4.2]{FOT}. 
\end{proof}

 We call an energy form $E$ {\em recurrent} if $1 \in \ker E$ and {\em transient if $\ker E = \{0\}$}. These notions are borrowed from Dirichlet form theory, as a Dirichlet form $\cE$ is recurrent  if and only if $1 \in \ker \Ee$ and it is transient if and only if $\ker \Ee = \{0\}$, see e.g. \cite[Theorem~1.6.2 and Theorem~1.6.3]{FOT}.

\section{Reflected Dirichlet forms} \label{section:reflected forms}

In this section we construct the reflected Dirichlet form by splitting the given Dirichlet form into its main part and its killing part and then extending both parts to the maximal possible domain. We prove that the $L^2$-restriction of the main part is the maximal Dirichlet form that dominates the given form and we prove that it is the maximal Silverstein extension if there is no killing part. Moreover, we give an example of a Dirichlet form with non-vanishing killing part that does not possess a maximal Silverstein extension. Everything in this section is based on \cite[Chapter~3.3]{Schmi}.

\subsection{The main part}

In this subsection $\cE$ is a fixed Dirichlet form. Let $\varphi \in D(\cE)$ with $0 \leq \varphi \leq 1$ be given. We define the functional $\Eph:L^0(m) \to [0,\infty]$ by
$$\Eph(f) := \begin{cases}
              \cE(\varphi f) - \cE(\varphi f^2,\varphi) & \text{if } f \in L^\infty(m), \varphi f, \varphi f^2 \in D(\cE),\\
              \infty &\text{else}.
             \end{cases}
$$
On a technical level the following theorem is the main insight of this subsection. We formulate it as a theorem because we think that it has some applications  beyond the construction of reflected Dirichlet forms. Recall that the relation $\leq$ on quadratic forms compares the size of their domains; we have $q \leq q'$ for two quadratic forms $q,q'$ if and only if $q(f) \geq q'(f)$ for all $f \in D(q)$, cf. Appendix~\ref{appendix:closed forms on l0}.
\begin{theorem}[Properties of truncated forms] \label{theorem:properties of concatenated forms}
The functional $\Eph$ is a closable quadratic form on $L^0(m)$. Its closure $\Ep$ is a recurrent energy form with the following properties.
\begin{itemize}
 \item[(a)] $\Ee \leq \Ep$ and if $\psi \in D(\cE)$ with $\varphi \leq \psi \leq 1$, then $\cE_\psi \leq \Ep$.
 \item[(b)] $D(\Ep) \cap L^\infty(m) = D(\Eph) = \{f \in L^\infty(m) \mid \varphi f \in D(\cE)\}$. 
 \item[(c)] If $\tilde \cE$ is a Dirichlet form that dominates $\cE$, then $\tilde \cE_\varphi \leq \Ep$.
\end{itemize}
\end{theorem}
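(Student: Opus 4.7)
The plan is to verify the statement in four stages: well-definedness of $\Eph$, its closability on $L^0(m)$, and the three comparison properties (a)--(c). Everything rests on the following key lemma (which also yields the second equality in (b)): \emph{if $f \in L^\infty(m)$ and $\varphi f \in D(\cE)$, then $\varphi f^2 \in D(\cE)$, with $\cE(\varphi f^2)^{1/2} \le C_{\|f\|_\infty}(\cE(\varphi)^{1/2} + \cE(\varphi f)^{1/2})$.} I would prove this by combining the pointwise bounds $|\varphi(x) f(x)^2 - \varphi(y) f(y)^2| \le \|f\|_\infty^2 |\varphi(x) - \varphi(y)| + \varphi(y) |f(x)^2 - f(y)^2|$ and $\varphi(y)|f(x)-f(y)| \le |\varphi(x) f(x) - \varphi(y) f(y)| + \|f\|_\infty |\varphi(x)-\varphi(y)|$ with the contraction principle of Lemma~\ref{lemma:contraction properties}. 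Polarisation via $4\varphi fg = \varphi(f+g)^2 - \varphi(f-g)^2$ then gives $\varphi fg \in D(\cE)$ for $f, g \in D(\Eph)$, so $\Eph(f,g) := \cE(\varphi f, \varphi g) - \cE(\varphi fg, \varphi)$ is a bona fide symmetric bilinear form.

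Nonnegativity and the comparison properties all flow from a single semigroup identity. Combining $\cE(u,v) = \lim_{t\to 0+} t^{-1}\langle u - T_tu, v\rangle$ with the cancellation $\|\varphi f\|_2^2 = \langle \varphi f^2, \varphi\rangle$ and symmetrising the residual $T_t$-terms via the Markov kernel $p_t$, one obtains
\[ \Eph(f) = \lim_{t\to 0+} \tfrac{1}{2t}\iint \varphi(x)\varphi(y)(f(x)-f(y))^2 \, p_t(x,dy)\, m(dx) \ge 0. \]
The killing contribution cancels automatically, and $f \equiv 1$ gives $\Eph(1) = \cE(\varphi) - \cE(\varphi,\varphi) = 0$, so the closure $\Ep$ will be recurrent. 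Closability in $L^0(m)$ is equivalent to $L^0$-lower semicontinuity on the domain (appendix closability lemma); given $f_n \tom f$ in $D(\Eph)$ with bounded $\Eph$-energy, I would truncate via Lemma~\ref{lemma:bounded approximation} to $\|f_n\|_\infty \le \|f\|_\infty$, use the squaring lemma to bound $\cE(\varphi f_n^2, \varphi)$ and hence $\cE(\varphi f_n)$, then invoke $L^0$-lower semicontinuity of $\Ee$ to place $\varphi f \in D(\cE)$, and finish by a Fatou-type argument on the cross term. The same scheme applied to differences of a $\Ep$-Cauchy sequence handles the first equality in (b).

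For (a), any $f \in D(\Ee)\cap L^\infty$ has $\varphi f \in D(\cE)$ by the algebra property of Lemma~\ref{lemma:contraction properties}, and the semigroup representation writes
\[ \Ee(f) - \Eph(f) = \lim_{t\to 0+}\Bigl[ \tfrac{1}{2t}\iint (1 - \varphi(x)\varphi(y))(f(x)-f(y))^2 \, p_t(x,dy) m(dx) + t^{-1}\int f^2 m_t \, dm \Bigr] \ge 0, \]
where $m_t := 1 - T_t 1 \ge 0$ encodes subconservativity; the same formula with $\varphi \le \psi$ yields $\cE_\psi \le \Ep$. For (c), domination of $\cE$ by $\tilde{\cE}$ transfers to pointwise kernel domination $p_t \le \tilde p_t$, and so the nonnegative integrand $\varphi(x)\varphi(y)(f(x)-f(y))^2$ forces the semigroup representation of $\hat{\tilde{\cE}}_\varphi$ to dominate that of $\Eph$, yielding $\tilde{\cE}_\varphi \le \Ep$ in the paper's ordering after closure. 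The hardest step, I anticipate, is executing the nonnegativity proof in a way that respects the paper's algebraic spirit: my semigroup-kernel derivation is implicitly probabilistic, and a purely algebraic version will likely require approximating $\varphi f$ by resolvent-smoothed functions $\alpha G_\alpha(\varphi f)$, so that the identities $\cE(u,v) = \langle Au, v\rangle$ and the chain rules for the generator can be exploited directly before passing to $\alpha \to \infty$.
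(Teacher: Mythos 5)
Your overall architecture matches the paper's: the squaring lemma via pointwise Lipschitz estimates fed into Lemma~\ref{lemma:contraction properties}, nonnegativity and the comparison properties via an approximation of $\cE$ by forms that are manifestly sums of squares with nonnegative coefficients, recurrence from $\Eph(1)=0$, and closability via the quantitative bound $\cE(\varphi f^2)^{1/2}\le 2\|f\|_\infty\cE(\varphi f)^{1/2}+3\|f\|_\infty^2\cE(\varphi)^{1/2}$. However, there are genuine gaps. The most important one concerns the domains in (a) and (c): the ordering $q\le q'$ requires $D(q)\subseteq D(q')$, so for $\cE_\psi\le\Ep$ you must show that $\psi f\in D(\cE)$ forces $\varphi f\in D(\cE)$ when $\varphi\le\psi$, and for $\tilde\cE_\varphi\le\Ep$ that $\varphi f\in D(\tilde\cE)$ forces $\varphi f\in D(\cE)$. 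Your kernel representation only compares the two energies where both are already known to be finite, and the approximation of $\Eph(f)$ by the smoothed forms is itself only valid for $f$ already in $D(\Eph)$, since it involves off-diagonal values of $\cE$. The paper closes the first implication with a separate lemma (a Lipschitz estimate for $(x,y,z)\mapsto xy/(z+\varepsilon)$ on $\{z\ge 0,\ |x|\le z\|f\|_\infty,\ |y|\le z\}$ applied to $(\psi f,\varphi,\psi)$, followed by lower semicontinuity as $\varepsilon\to 0+$), and the second with the order-ideal property of $D(\cE)$ in $D(\tilde\cE)$ combined with $|\varphi f|\le\|f\|_\infty\varphi$. Neither appears in your plan, and neither follows from the integrand comparison.

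Two further points. The kernel $p_t(x,dy)$ need not exist for an abstract Dirichlet form on a $\sigma$-finite measure space, and your proposed repair via generator chain rules is the wrong direction (there is no chain rule for nonlocal generators). The paper's fix is exactly the "algebraic" one you are looking for: replace $\cE$ by $\cE^{(\alpha)}(f)=\as{f,(I-\alpha G_\alpha)f}$, reduce to simple functions $f=\sum_j\alpha_j\1_{A_j}$, and write $\hat\cE^{(\alpha)}_\varphi(f)=\sum_{i,j}b_{ij}^\varphi(\alpha_i-\alpha_j)^2+\sum_i c_i^\varphi\alpha_i^2$ with $b_{ij}^\varphi=\alpha\as{\varphi\1_{A_i},G_\alpha(\varphi\1_{A_j})}\ge 0$ and $c_i^\varphi\ge 0$; this uses only positivity and sub-Markovianity of $\alpha G_\alpha$ and simultaneously yields (b)--(e) of the auxiliary lemma by comparing coefficients. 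Finally, in the closability argument a "Fatou-type argument on the cross term" cannot work as stated: $-\cE(\varphi f_n^2,\varphi)$ enters with a negative sign, so a liminf bound is useless; you need genuine convergence of $\cE(\varphi f_n^2,\varphi)$, which the paper obtains from the $\cE$-boundedness of $(\varphi f_n^2)$ plus its $L^2$-convergence, hence $\cE$-weak convergence via Lemma~\ref{lemma:existence of a weakly convergent subnet}.
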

 \begin{definition}[Truncated form]
  The closure $\Ep$ of $\Eph$ on $L^0(m)$ is called the {\em truncation of $\cE$ with respect to $\varphi$}. 
 \end{definition}
 \begin{remark}
  Property (b) in the theorem is quite important as it shows how to compute   $\Ep$ and its domain. Namely, if $f \in L^\infty(m)$ with $\varphi f \in D(\cE)$ we have
  $$\Ep(f) = \Eph(f)  = \cE(\varphi f) - \cE(\varphi f^2,\varphi).$$
  For arbitrary $f \in L^0(m)$ and $n \in \N$ we let $f^{(n)}:= (f\wedge n) \vee(-n).$ It follows from the Markov property and the lower semicontinuity of $\Ep$ that $\Ep(f) = \lim\limits_{n\to \infty}\Ep(f^{(n)})$. Therefore, $f \in D(\Ep)$ if and only if $\varphi f^{(n)} \in D(\cE)$ for each $n \in \N$ and the limit
  $$\Ep(f) = \lim_{n\to \infty} \cE(\varphi f^{(n)}) - \cE(\varphi (f^{(n)})^2,\varphi) $$
  is finite.
 \end{remark}

 In order to prove this theorem we need two lemmas.
\begin{lemma} \label{lemma:maximal silverstein extension technical lemma}
 Let $\varphi \in D(\cE)$ with $0 \leq \varphi \leq 1$ and let $f \in L^\infty(m)$. 
 \begin{itemize}
  \item[(a)] Let $C:\R \to \R$ be $L$-Lipschitz with $C(0) = 0$ and let $$M:= \sup \{|C(x)| \mid |x|\leq \|f\|_\infty\}.$$ Then
  $$\cE( \varphi\, C(f))^{1/2} \leq L \cE(\varphi f)^{1/2} + (M + L\|f\|_\infty) \cE(\varphi)^{1/2}.$$

  In particular, $\varphi f  \in D(\cE)$ implies $\varphi\, C(f)  \in D(\cE)$ and $\varphi f^2 \in D(\cE)$. 
  \item[(b)] If $\psi \in D(\cE)$ with $0 \leq \varphi \leq \psi$, then $\psi f  \in D(\cE)$ implies $\varphi f  \in D(\cE)$. 
 \end{itemize}
\end{lemma}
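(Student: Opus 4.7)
My plan is to deduce both parts from the contraction inequality Lemma~\ref{lemma:contraction properties}, by exhibiting the target function as dominated, both pointwise and in differences, by finitely many functions already known to lie in $D(\cE)$. The one algebraic identity that powers the whole argument is the product-rule rewriting $u(x)v(x) - u(y)v(y) = u(x)[v(x)-v(y)] + v(y)[u(x)-u(y)]$, applied once in (a) and twice in (b).

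For part (a), I would first split
\[
\varphi(x)C(f(x)) - \varphi(y)C(f(y)) = \varphi(x)\bigl[C(f(x))-C(f(y))\bigr] + C(f(y))\bigl[\varphi(x)-\varphi(y)\bigr].
\]
Using $L$-Lipschitzness of $C$ together with $\varphi \geq 0$, the first summand is bounded by $L\,\varphi(x)|f(x)-f(y)|$, and the identity $\varphi(x)[f(x)-f(y)] = [\varphi f(x) - \varphi f(y)] - f(y)[\varphi(x)-\varphi(y)]$ turns this into $L|\varphi f(x)-\varphi f(y)| + L\|f\|_\infty |\varphi(x)-\varphi(y)|$. Combined with $|C(f(y))| \leq M$ this yields
\[
|\varphi C(f)(x) - \varphi C(f)(y)| \leq L|\varphi f(x) - \varphi f(y)| + (M + L\|f\|_\infty)|\varphi(x)-\varphi(y)|,
\]
while $C(0)=0$ plus Lipschitzness gives the pointwise estimate $|\varphi C(f)| \leq L|\varphi f|$. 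Feeding $f_1 = L\varphi f$ and $f_2 = (M + L\|f\|_\infty)\varphi$ (both in $D(\cE)$) into Lemma~\ref{lemma:contraction properties} produces the displayed inequality, and $\varphi C(f) \in D(\cE)$ follows because $|\varphi C(f)| \leq L|\varphi f| \in L^2(m)$. For the consequence $\varphi f^2 \in D(\cE)$, let $C$ be any $2\|f\|_\infty$-Lipschitz extension of $t\mapsto t^2$ from $[-\|f\|_\infty,\|f\|_\infty]$ to $\R$ with $C(0)=0$ (say, set $C$ equal to $\|f\|_\infty^2$ outside the interval); then $C(f) = f^2$ a.e., so $\varphi f^2 = \varphi C(f)$.

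For part (b), I would again invoke Lemma~\ref{lemma:contraction properties}, this time with three test functions. Pointwise, $|\varphi f| \leq \|f\|_\infty\,\varphi$ is immediate from $0\leq\varphi$. For the differences, expand $\varphi f(x)-\varphi f(y) = \varphi(x)[f(x)-f(y)] + f(y)[\varphi(x)-\varphi(y)]$, estimate $\varphi(x)|f(x)-f(y)| \leq \psi(x)|f(x)-f(y)|$ using $\varphi \leq \psi$, and apply the product-rule trick once more to rewrite $\psi(x)[f(x)-f(y)] = [\psi f(x)-\psi f(y)] - f(y)[\psi(x)-\psi(y)]$. The resulting bound
\[
|\varphi f(x) - \varphi f(y)| \leq |\psi f(x) - \psi f(y)| + \|f\|_\infty|\psi(x)-\psi(y)| + \|f\|_\infty|\varphi(x)-\varphi(y)|
\]
feeds into Lemma~\ref{lemma:contraction properties} with $f_1 = \psi f$, $f_2 = \|f\|_\infty\,\psi$, $f_3 = \|f\|_\infty\,\varphi$, all of which lie in $D(\cE)$ by hypothesis, to give $\cE(\varphi f)^{1/2} < \infty$; and $\varphi f \in L^2(m)$ is automatic from $|\varphi f| \leq \|f\|_\infty\,\varphi$. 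The main conceptual obstacle in (b) is to resist the tempting quotient $\varphi f = (\varphi/\psi)(\psi f)$, which is ill-defined on $\{\psi=0\}$ and without Lipschitz control on $\psi$ off that set; the three-term additive decomposition above sidesteps this by keeping $\varphi$ and $\psi$ as independent correction terms, putting the estimate entirely within reach of the contraction lemma.
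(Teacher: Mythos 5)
Your proof is correct. For part (a) it is essentially the paper's argument in different clothing: the paper packages the same telescoping into the two-variable function $\widetilde{C}(x,y) = C(x/y)\,y$ on the cone $\{(x,y) : |x| \le |y|\,\|f\|_\infty\}$ and verifies its Lipschitz constants in each variable, which upon substituting $x = \varphi f$, $y = \varphi$ reproduces exactly your estimate with the same constants $L$ and $M + L\|f\|_\infty$; the choice of the truncated square $t \mapsto t^2 \wedge \|f\|_\infty^2$ for the ``in particular'' part is also identical. Part (b) is where you genuinely diverge: the paper takes precisely the quotient route you warn against, made rigorous by an $\varepsilon$-shift --- it applies the contraction lemma to $C_\varepsilon(\psi f, \varphi, \psi) = \varphi\,\tfrac{\psi}{\psi+\varepsilon}\,f$ with $C_\varepsilon(x,y,z) = xy/(z+\varepsilon)$ Lipschitz on a suitable cone, and then lets $\varepsilon \to 0$ using the $L^2$-lower semicontinuity of $\cE$. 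Your three-term additive decomposition reaches the same quantitative bound $\cE(\varphi f)^{1/2} \le \cE(\psi f)^{1/2} + \|f\|_\infty\,\cE(\psi)^{1/2} + \|f\|_\infty\,\cE(\varphi)^{1/2}$ in a single application of Lemma~\ref{lemma:contraction properties}, with no limiting procedure and no appeal to closedness of $\cE$; this is simpler and gives the estimate directly rather than only in the limit. One small phrasing issue in (a): the bound $|\varphi\, C(f)| \le L|\varphi f| \in L^2(m)$ only yields $\varphi\, C(f) \in L^2(m)$ (which is what you need in order to invoke the contraction lemma at all); membership in $D(\cE)$ then comes from the finiteness of the right-hand side of your displayed energy inequality, not from the $L^2$ domination.
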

\begin{proof}
 (a): Let $A :=\{(x,y) \in \R^2 \mid |x| \leq |y| \cdot \|f\|_\infty\}$ and consider the function 
$$\ow{C}: A \to \R, \quad (x,y) \mapsto \ow{C}(x,y) := \begin{cases}C\left({x}/{y}\right)y &\text{if } y \neq 0\\0 & \text{if }  y = 0 \end{cases}.$$
We show that $\ow{C}$ is Lipschitz with appropriate constants. The statement  then follows from Lemma~\ref{lemma:contraction properties} and the identity $\varphi\, C(f) = \ow{C}(\varphi f, \varphi)$. For $(x_1,y_1),(x_2,y_2) \in A$ with $y_1,y_2 \neq 0$ we have
\begin{align*}
 |\ow{C}(x_1,y_1) - \ow{C}(x_2,y_2)| &\leq |y_1| |C(x_1/y_1) - C(x_2/y_2)| + |C(x_2/y_2)| |y_1 - y_2|.
\end{align*}
Since $|x_i|\leq |y_i| \|f\|_\infty$, $i=1,2$, we obtain $|C(x_2/y_2)| \leq M$ and 
$$|C(x_1/y_1) - C(x_2/y_2)| \leq L \left|\frac{x_1y_2 - x_2 y_1}{y_1y_2}\right| \leq L \left|\frac{x_1  - x_2}{y_1}\right| + L \|f\|_\infty \left|\frac{y_1 - y_2}{y_1}\right|. $$
Altogether, these considerations amount to 
$$|\ow{C}(x_1,y_1) - \ow{C}(x_2,y_2)| \leq L|x_1 - x_2| + (M + L \|f\|_\infty) |y_1 - y_2|.$$
By the continuity of $\ow{C}$ on $A$, this inequality extends to the case when $y_2 = x_2 = 0$, in which it reads  
$$|\ow{C}(x_1,y_1)| \leq L|x_1| + (M + L \|f\|_\infty) |y_1|.$$
From Lemma~\ref{lemma:contraction properties} we infer
$$\cE(\varphi\, C(f))^{1/2} = \cE(\ow{C}(\varphi f,\varphi))^{1/2} \leq L \cE(\varphi f)^{1/2} + (M + L\|f\|_\infty) \cE(\varphi)^{1/2}. $$
For the 'In particular'-part we apply the statement to the function 
$$C:\R \to \R,\, x \mapsto x^2 \wedge \|f\|_\infty^2.$$

(b):  We let $B:=\{(x,y,z) \in  \R^3  \mid z \geq 0, |x| \leq z \cdot \|f\|_\infty \text{ and } |y| \leq z\}$. For $\varepsilon> 0$  we consider the function
$$C_\varepsilon:B \to \R,\quad (x,y,z) \mapsto C_\varepsilon(x,y,z):= xy/(z + \varepsilon).$$
From the inequality $0 \leq \varphi \leq \psi$ we obtain
$$C_\varepsilon(\psi f, \varphi ,\psi) =  \varphi  \frac{\psi}{\psi + \varepsilon} f \to  \varphi f $$
in $L^2(m)$,   as $\varepsilon \to 0+$. The lower semicontinuity of $\cE$ implies
$$\cE(\varphi f) \leq \liminf_{\varepsilon \to 0+} \cE(C_\varepsilon(\psi f ,\varphi,\psi)). $$
Thus, it suffices to prove that the right-hand side of the above inequality is finite. 

The partial derivatives of $C_\varepsilon$ satisfy $|\partial_x C_\varepsilon| \leq 1$, $|\partial_y C_\varepsilon| \leq \|f\|_\infty$ and $|\partial_z C_\varepsilon| \leq \|f\|_\infty$ in the interior of $B$. This yields
$$|C_\varepsilon(x_1,y_1,z_1) - C_\varepsilon(x_2,y_2,z_2)| \leq |x_1 - x_2| + \|f\|_\infty |y_1-y_2| + \|f\|_\infty |z_1 - z_2|,$$
 for $(x_i,y_i,z_i), i = 1,2,$ in the interior of $B$. Since  $C_\varepsilon$ is continuous and any point in $B$ can be approximated by interior points, we can argue similarly as in the proof of assertion (a) to obtain 
$$\cE(C_\varepsilon(\psi f, \varphi,\psi))^{1/2} \leq \cE(\psi f)^{1/2} + \|f\|_\infty \cE(\varphi)^{1/2} + \|f\|_\infty \cE(\psi)^{1/2} < \infty. $$
This finishes the proof.
\end{proof}

\begin{lemma}\label{lemma:properties of eph} Let $\varphi,\psi \in D(\cE)$ with $0\leq \varphi,\psi \leq 1$. 
 \begin{itemize}
 \item[(a)] $\Eph$ is a nonnegative quadratic form on $L^0(m)$. Its domain satisfies
 $$D(\Eph) = \{f \in L^\infty(m) \mid  \varphi f \in D(\cE)\}.$$
 \item[(b)] For every normal contraction $C:\R \to \R$ and every $f \in L^0(m)$ the inequality 
 $$\Eph(C\circ f) \leq \Eph(f)$$
  holds. 
  \item[(c)] If $ \varphi \leq \psi$, then for all $f \in L^0(m)$ we have $\Eph (f) \leq \hat{\cE}_{\psi}(f)$. 
  \item[(d)] For all $f \in D(\cE) \cap L^\infty(m)$ the inequality $\Eph (f) \leq \cE(f)$ holds.
  \item[(e)] If $\tilde \cE$ is a Dirichlet form that dominates $\cE$, then for all $f \in L^0(m)$ we have $$\Eph(f) \leq \hat{\tilde \cE}_\varphi(f).$$
 \end{itemize}
 
 \end{lemma}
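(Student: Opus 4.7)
The plan is to handle the structural content of (a) using Lemma~\ref{lemma:maximal silverstein extension technical lemma} and polarization, and then to unify (a)-nonnegativity together with (b)--(e) through a Yosida-type semigroup representation of $\Eph$.

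For (a), first identify $D(\Eph) = \{f \in L^\infty(m) \mid \varphi f \in D(\cE)\}$: the inclusion $\subseteq$ is immediate from the definition, while $\supseteq$ uses Lemma~\ref{lemma:maximal silverstein extension technical lemma}(a) applied to $C(x) = x^2 \wedge \|f\|_\infty^2$, which gives $\varphi f^2 = \varphi\,C(f) \in D(\cE)$ whenever $\varphi f \in D(\cE)$. Polarization produces the associated bilinear form $\Eph(f,g) = \cE(\varphi f, \varphi g) - \cE(\varphi fg, \varphi)$; this is well-defined on the above domain because $\varphi fg = \tfrac12(\varphi(f+g)^2 - \varphi f^2 - \varphi g^2) \in D(\cE)$ by the same step applied to each summand. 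Bilinearity and symmetry are then routine.

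For (a)-nonnegativity and (b)--(e), I would introduce the Yosida form $\cE^{(t)}(u,v) := t^{-1}\langle u - p_t u, v\rangle_2$, where $(p_t)$ is the symmetric Markovian semigroup of $\cE$. By the spectral theorem $\cE^{(t)}(u) \nearrow \cE(u)$ as $t \to 0+$ for every $u \in D(\cE)$. Writing $\hat\cE^{(t)}_\varphi(f) = t^{-1}(\langle \varphi, p_t(\varphi f^2)\rangle_2 - \langle \varphi f, p_t(\varphi f)\rangle_2)$ and symmetrizing the two variables via $\langle u, p_t v\rangle_2 = \langle v, p_t u\rangle_2$ produces
\[
\hat\cE^{(t)}_\varphi(f) \;=\; \frac{1}{2t}\iint \varphi(x)\varphi(y)(f(x)-f(y))^2 \,\hat p_t(dx,dy),
\]
where $\hat p_t$ denotes the symmetric positive bilinear form on simple functions of two variables induced by $p_t$, defined on $F = \sum a_{ij} 1_{A_i \times B_j}$ by $\hat p_t(F) := \sum a_{ij}\langle 1_{A_i}, p_t 1_{B_j}\rangle_2$. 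From this representation the remaining assertions reduce to pointwise inequalities of the kernel: non-negativity is immediate; (b) follows from $(C(a)-C(b))^2 \leq (a-b)^2$ for normal contractions; (c) from $\varphi(x)\varphi(y) \leq \psi(x)\psi(y)$ when $\varphi \leq \psi$, combined with Lemma~\ref{lemma:maximal silverstein extension technical lemma}(b) for the domain inclusion $D(\hat{\cE}_\psi) \subseteq D(\Eph)$; (d) from $\varphi(x)\varphi(y) \leq 1$ compared against the analogous identity $\cE^{(t)}(f) = \tfrac{1}{2t}\iint (f(x)-f(y))^2 \hat p_t(dx,dy) + \tfrac{1}{t}\int f^2(1 - p_t 1)\,dm$; and (e) from the fact that, by Lemma~\ref{lemma:characterization of domination}, domination of $\cE$ by $\tilde\cE$ is equivalent to $\tilde p_t \geq p_t$ pointwise, which in turn yields $\hat{\tilde p}_t \geq \hat p_t$ on positive simple functions. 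In each case letting $t \to 0+$ and invoking monotone convergence transfers the inequality to $\Eph$.

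The main obstacle is making the symmetrization and the pairing against $\hat p_t$ rigorous without any regularity of the state space: the object $\hat p_t$ has no intrinsic kernel, but exists as a symmetric positive bilinear functional on simple tensors, and indicator-level identities extend to the non-negative symmetric kernels $\varphi(x)\varphi(y)(f(x)-f(y))^2$ by a standard monotone class argument. A secondary technical point is verifying the domain inclusions needed in (c) and (e), which are supplied by Lemma~\ref{lemma:maximal silverstein extension technical lemma}(b) and Lemma~\ref{lemma:characterization of domination} respectively.
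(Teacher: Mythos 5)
Your proposal follows essentially the same route as the paper. The paper uses the resolvent approximation $\alpha\,\cE^{(\alpha)}(f)=\alpha\langle f,(I-\alpha G_\alpha)f\rangle_2$ in place of your semigroup Yosida form, reduces to simple functions $f$, and exhibits $\hat\cE^{(\alpha)}_\varphi(f)$ as a finite sum $\sum_{i,j} b^\varphi_{ij}(\alpha_i-\alpha_j)^2+\sum_i c^\varphi_i\alpha_i^2$ with $0\le b^\varphi_{ij}\le b_{ij}$ and $0\le c^\varphi_i\le c_i$ (plus the analogous comparisons for $\psi$ and for a dominating $\tilde\cE$); this is exactly your kernel-positivity argument, packaged so that the object $\hat p_t$ never has to be made rigorous and the monotone class step is replaced by $L^2$-approximation of bounded $f$ by simple functions. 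One point you must make explicit: the approximation $\hat\cE^{(t)}_\varphi(g)\to\Eph(g)$ rests on the off-diagonal convergence $\cE^{(t)}(\varphi g^2,\varphi)\to\cE(\varphi g^2,\varphi)$, so it is only valid for $g\in D(\Eph)$, and it is not monotone in $t$ (only the on-diagonal terms are), so ``monotone convergence'' is not the right justification. Consequently each of (b)--(e) needs a prior domain check before passing to the limit; you supply these for (c) and (e), but not for (b), where you must verify $\varphi\, C(f)\in D(\cE)$ via Lemma~\ref{lemma:maximal silverstein extension technical lemma}(a) (exactly as you already used it in part (a)) --- without this the asserted inequality could fail with an infinite left-hand side. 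With those checks inserted, your argument is sound and is the paper's proof in different notation.
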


\begin{proof}
(a): Let $f \in L^\infty(m)$. Lemma~\ref{lemma:maximal silverstein extension technical lemma}~(a) shows that $\varphi  f\in D(\cE)$ implies $\varphi f^2  \in D(\cE)$. This observation and the fact that $\cE$ is a quadratic form then show that $\Eph$ is a quadratic form whose domain satisfies $D(\Eph) = \{f \in L^\infty(m) \mid \varphi f \in D(\cE)\}$. The nonnegativity of $\Eph$ follows from assertion (c) (we shall not use this fact in the rest of the proof).

Let $(G_\alpha)$ be the resolvent of $\cE$. We denote the corresponding continuous approximating form by $\cE^{(\alpha)}$, i.e.,   
$$\cE^{(\alpha)}:L^2(m) \to [0,\infty),\, \cE^{(\alpha)}(f) :=  \as{f, (I-\alpha G_\alpha)f}.$$
It is shown in \cite{FOT} that for $f \in L^2(m)$ we have 
$$\cE(f) = \lim_{\alpha \to \infty} \alpha \cE^{(\alpha)}(f),$$
where the limit is infinite on $L^2(m) \setminus D(\cE)$. In particular, for $f  \in D(\Eph)$ this implies
$$\Eph(f) = \cE(\varphi f) - \cE(\varphi f^2,\varphi) = \lim_{\alpha\to \infty} \alpha \left(\cE^{(\alpha)}(\varphi f) - \cE^{(\alpha)}(\varphi f^2,\varphi)\right)=  \lim_{\alpha\to \infty} \alpha \hat \cE_\varphi^{(\alpha)}(f). $$
Note that since $\Eph(f)$ involves taking off-diagonal values of $\cE$, this approximation for $\Eph$ is only valid for functions in the domain   $D(\Eph)$.

We now prove that assertions (b), (c), (d)  and (e) hold true for the continuous Dirichlet form $\cE^{(\alpha)}$ and then infer the statement for general forms by an approximation procedure. To simplify notation we write $\hat \cE^{(\alpha)}_\varphi$ for the form $\hat{(\cE^{(\alpha)})}_\varphi$. 

Since $\varphi \in D(\cE) \subseteq L^2(m),$ for any $f \in L^\infty(m)$ we have $\varphi f \in L^2(m) = D(\cE^{(\alpha)})$ and so $D(\hat \cE^{(\alpha)}_\varphi) = L^\infty(m)$. Any function in $L^\infty(m)$ can be approximated by a sequence of simple functions $(f_n)$ in $L^2(m)$ such that $\varphi f_n$ converges to $\varphi f$  and $\varphi f_n^2$ converges to  $\varphi f_n^2$ in $L^2(m)$. Therefore, it suffices to prove the statements for simple $L^2$-functions. To this end, let
$$f = \sum_{j = 1}^n \alpha_j \1_{A_j}$$
with pairwise disjoint $A_j$ of finite measure be given. We obtain 
$$\hat \cE^{(\alpha)}_\varphi(f) = \sum_{i,j=1}^n b_{ij}^\varphi (\alpha_i - \alpha_j)^2 +  \sum_{i}^n c_{i}^\varphi \alpha_i^2,$$
with 
$$b_{ij}^\varphi = -\cE^{(\alpha)}_\varphi(\1_{A_i},\1_{A_j}) = -\cE^{(\alpha)}(\varphi \1_{A_i},\varphi\1_{A_j}) = \alpha \as{\varphi \1_{A_i},G_\alpha (\varphi \1_{A_j})} $$
and 
$$c_{i}^\varphi = \cE^{(\alpha)}_\varphi(\1_{A_i},\1_{\cup_j A_j}) = \cE^{(\alpha)}(\varphi \1_{A_i},\varphi (\1_{\cup_j A_j} - 1)) = \as{\varphi \1_{A_i},  \alpha  G_\alpha (\varphi  \1_{X \setminus \cup_j A_j})}.$$
The same computation for $\cE^{(\alpha)}$ yields 
$$\cE^{(\alpha)}(f) = \sum_{i,j=1}^n b_{ij}  (\alpha_i - \alpha_j)^2 +  \sum_{i}^n c_{i} \alpha_i^2,$$
with 
$$b_{ij} = \cE^{(\alpha)}(\1_{A_i},\1_{A_j}) = \alpha \as{ \1_{A_i},G_\alpha  \1_{A_j}} $$
 and 
 $$c_i = \cE^{(\alpha)} (\1_{A_i},\1_{\cup_j A_j}) = \as{\1_{A_i}, \1_{\cup_j A_j} - \alpha G_\alpha\1_{\cup_j A_j}}.  $$
 Since $\alpha G_\alpha$ is Markovian, these identities show 
 $0 \leq b_{ij}^\varphi \leq  b_{ij} \text{ and } 0 \leq c_i^\varphi \leq c_i,$
 and we obtain (b) and (d) for the form $\cE^{(\alpha)}$ (cf. the proof of \cite[Theorem~I.4.12]{MR}). If $\psi \in L^2(m)$ with $\varphi \leq \psi \leq 1$, then we also have $b_{ij}^\varphi \leq b_{ij}^\psi$ and $c_i^\varphi \leq c_i^\psi$ proving (c) for the form $\cE^{(\alpha)}$. If $\tilde \cE$ is a Dirichlet form that dominates $\cE$, it follows from the formula for $b_{ij}^\varphi$ and $c_i^\varphi$ that $ \hat \cE^{(\alpha)}_\varphi(f)  \leq \hat{\tilde \cE}^{(\alpha)}_\varphi(f) $ for each $f \in L^2(m)$.

 We now prove the statements (b), (c), (d) and (e) for  $\Eph$ by approximating it with $\hat \cE^{(\alpha)}_\varphi$. Since this approximation is only valid on $D(\Eph)$, for each statement we still need to verify that the involved functions belong to the correct domain.   
 
 (b): Let $C:\R \to \R$ be a normal contraction and let $f \in D(\Eph)$ such that $\varphi f,\varphi f^2 \in D(\cE)$. Lemma~\ref{lemma:maximal silverstein extension technical lemma} yields $\varphi\, C(f) \in D(\cE)$ and $\varphi\, C(f)^2 \in D(\cE)$. Thus, using (b) for the approximating forms, we obtain
 $$\Eph(C(f)) = \lim_{\alpha \to \infty} \alpha \hat \cE^{(\alpha)}_\varphi(C(f)) \leq  \lim_{\alpha \to \infty} \alpha \hat \cE^{(\alpha)}_\varphi( f ) = \Eph(f).$$

 (c): Let $f \in D(\hat \cE_\psi)$. Since $\psi f,\psi f^2 \in D(\cE)$, Lemma~\ref{lemma:maximal silverstein extension technical lemma} yields $\varphi f,\varphi  f^2 \in D(\cE)$. Thus, using (c) for the approximating forms  yields
 $$\Eph( f ) = \lim_{\alpha \to \infty} \alpha \hat \cE^{(\alpha)}_\varphi(f) \leq  \lim_{\alpha \to \infty} \alpha \hat \cE^{(\alpha)}_\psi( f ) = \cE_\psi( f ). $$
 (d): Let $f \in D(\cE) \cap L^\infty(m)$. Since $D(\cE) \cap L^\infty(m)$ is an algebra, we have $\varphi f \in D(\cE)$ and Lemma~\ref{lemma:maximal silverstein extension technical lemma} yields $\varphi f^2 \in D(\cE)$. Using (d) for the approximating forms shows
 $$\Eph(f) = \lim_{\alpha \to \infty} \alpha \hat \cE^{(\alpha)}_\varphi(f) \leq  \lim_{\alpha \to \infty} \alpha \hat \cE^{(\alpha)} ( f ) = \cE( f ).$$

 (e): Let $f \in D(\hat{\tilde \cE}_\varphi)$ such that $\varphi f \in D(\tilde \cE)$. We have $\varphi \in D(\cE)$ and $|\varphi f| \leq \|f\|_\infty \varphi$. Since $D(\cE)$ is an order ideal in $D(\tilde \cE)$, this implies $\varphi f\in D(\cE)$ and also $\varphi f^2 \in D(\cE)$ by Lemma~\ref{lemma:maximal silverstein extension technical lemma}. Using (e) for the approximating forms yields
 $$\Eph(f) = \lim_{\alpha \to \infty} \alpha \hat \cE^{(\alpha)}_\varphi(f) \leq \lim_{\alpha \to \infty} \alpha  \hat{\tilde \cE}^{(\alpha)}_\varphi(f) = \hat{\tilde \cE}_\varphi(f).$$
 This finishes the proof.
 \end{proof}
 
   We can now prove Theorem~\ref{theorem:properties of concatenated forms} by showing that $\Eph$ is closable and that the properties discussed in the previous lemma pass to its closure.

 \begin{proof}[Proof of Theorem~\ref{theorem:properties of concatenated forms}]
  We first prove that $\Eph$ is closable on $L^0(m)$. Indeed, we show that for any sequence $(f_n)$ in $L^\infty(m)$ and $f \in L^\infty(m)$ the convergence $f_n \tom f$ implies
  $$\Eph(f) \leq \liminf_{n\to \infty} \Eph(f_n).$$
  This means that the restriction of $\Eph$ to $L^\infty(m)$ is lower semicontinuous with respect to $L^0(m)$-convergence. It is slightly stronger than closability (cf. Lemma~\ref{lemma:characterization closability}) and is crucial for proving the identity $D(\Ep) \cap L^\infty(m) = D(\Eph)$ later on.
  
  If $\liminf_{n\to \infty} \Eph(f_n) = \infty$ there is nothing to show. Hence, after passing to a suitable subsequence, we can assume  that $(f_n)$ in $D(\Eph)$ and
  $$\lim_{n\to \infty}\Eph(f_n) = \liminf_{n\to \infty} \Eph(f_n) < \infty.$$
  Moreover, by Lemma~\ref{lemma:properties of eph}~(b) we can further assume $\|f_n\|_\infty \leq \|f\|_\infty$.
  
  Lemma~\ref{lemma:maximal silverstein extension technical lemma}~(a) applied to the function $C:\R \to \R, \, C(x) = x^2 \wedge \|f\|_\infty ^2$ yields
 $$\cE(\varphi f_n^2)^{1/2} = \cE(\varphi\, C(f_n))^{1/2} \leq 2 \|f\|_\infty \cE(\varphi f_n )^{1/2} + 3 \|f\|_\infty ^2 \cE(\varphi)^{1/2}.$$
 From this inequality  we infer
 \begin{align*} 
  \Eph(f_n) &= \cE(\varphi f_n) - \cE(\varphi f_n^2,\varphi) \\
  &\geq \cE(\varphi f_n) - \cE(\varphi f_n^2)^{1/2}\cE(\varphi)^{1/2}\\
  &\geq \cE(\varphi f_n)^{1/2} \left(\cE(\varphi f_n)^{1/2}  - 2\|f\|_\infty \cE(\varphi)^{1/2} \right) - 3\|f\|^2_\infty \cE(\varphi).
 \end{align*}
Therefore, the boundedness of $(\Eph(f_n))$ yields the boundedness of $(\cE(\varphi f_n))$ and this in turn yields the boundedness of $(\cE(\varphi f_n^2))$. Since $(f_n)$ is uniformly bounded by $\|f\|_\infty$ and since $\varphi \in L^2(m)$, Lebesgue's dominated convergence theorem implies $\varphi f_n \to \varphi f$ and $\varphi f_n^2 \to  \varphi f^2$ in $L^2(m)$. Therefore, the $L^2$-lower semicontinuity of $\cE$ yields $\varphi f, \varphi f^2 \in D(\cE)$, i.e., $f \in D(\Eph)$. From the boundedness of $(\cE(\varphi f_n^2))$ and the convergence $\varphi f_n^2 \to  \varphi f^2$ in $L^2(m)$ we obtain the $\cE$-weak convergence $\varphi f_n^2   \to \varphi f^2$, see Lemma~\ref{lemma:existence of a weakly convergent subnet}. This observation and the lower semicontinuity of $\cE$ on $L^2(m)$ amount to 
$$\Eph(f) = \cE(\varphi f) - \cE(\varphi f^2,\varphi) \leq \liminf_{n\to \infty} \cE(\varphi f_n) - \lim_{n\to \infty} \cE(\varphi f_n^2,\varphi) = \liminf_{n \to \infty} \Eph(f_n),$$
and  prove the desired lower semicontinuity. 

Having proven the closability of $\Eph$ on $L^0(m)$, we denote  its closure by $\Ep$. We now prove that $\Ep$ is Markovian. To this end, let $C:\R \to \R$ a normal contraction and let  $f \in D(\Ep)$. By Lemma~\ref{lemma:characterization closability} there exists an $\Eph$-Cauchy sequence $(f_n)$ in $D(\Eph)$ with $f_n \tom f$ and $\Eph(f_n) \to \Ep(f)$. Since $C$ is a normal contraction, we also have $C(f_n) \tom C(f)$. The lower semicontinuity of $\Ep$ and Lemma~\ref{lemma:properties of eph}~(b) yield $C(f_n) \in D(\Eph)$ and
$$\Ep(C(f)) \leq \liminf_{n\to \infty} \Ep(C(f_n)) = \liminf_{n\to \infty} \Eph(C(f_n)) \leq  \liminf_{n\to \infty} \Eph( f_n ) = \Ep(f).$$
Altogether, we have proven that $\Ep$ is an energy form. Its recurrence follows from the fact that $1 \in D(\Eph)$ and 
$$\Ep(1) = \Eph(1) = \cE(\varphi 1) - \cE(\varphi 1^2,\varphi) = 0.$$

(b): Since $\Ep$ is an extension of $\Eph$ and since $D(\Eph) \subseteq L^\infty(m)$, the inclusion $D(\Eph) \subseteq D(\Ep) \cap L^\infty(m)$ is trivial. Let $f \in  D(\Ep) \cap L^\infty(m)$ be given. According to Lemma~\ref{lemma:characterization closability}, there exists an $\Eph$-Cauchy sequence $(f_n)$ in $D(\Eph)$ with $f_n \tom f$ and $\Ep(f) = \lim_{n \to \infty}\Eph(f_n)$. The lower semicontinuity property that we proved above for $\Eph$ yields 
$$\Eph(f) \leq \liminf_{n\to\infty} \Eph(f_n) = \Ep(f) < \infty. $$
This shows $f \in D(\Eph)$.

(a): Let $f \in D(\Ee)$ and let $(f_n)$ be an $\cE$-Cauchy sequence with $f_n \tom f$. Since $D(\cE) \cap L^\infty(m)$ is dense in $D(\cE)$, see e.g. \cite[Theorem~1.4.2]{FOT}, we can choose the $(f_n)$ to be essentially bounded so that $f_n \in D(\Eph)$. The $L^0(m)$-lower semicontinuity of $\Ep$ and Lemma~\ref{lemma:properties of eph}~(d) yield
$$\Ep(f) \leq \liminf_{n\to \infty}\Ep(f_n) = \liminf_{n\to \infty}\Eph(f_n) \leq  \liminf_{n\to \infty} \cE(f_n) = \Ee(f).$$
This proves $\Ee \leq \Ep$. Now, let $\psi \in D(\cE)$ with $\varphi \leq \psi \leq 1$. For $f \in D(\cE_\psi)$ we choose an $\hat \cE_\psi$-Cauchy sequence $(f_n)$ with $f_n \tom f$. Lemma~\ref{lemma:properties of eph}~(c) yields $f_n \in D(\Eph)$ and  
$$\Ep(f) \leq \liminf_{n \to \infty} \Ep(f_n) = \liminf_{n \to \infty} \Eph(f_n) \leq \liminf_{n \to \infty} \hat \cE_\psi(f_n) = \cE_\psi(f).$$

(c): Let $\tilde \cE$ be a Dirichlet form that dominates $\cE$. For $f \in D(\tilde \cE_\varphi)$ we choose an $\hat{\tilde \cE}_\varphi$-Cauchy sequence $(f_n)$ with $f_n \tom f$ and $\hat{\tilde \cE}_\varphi(f_n) \to \tilde \cE_\varphi(f)$. The $L^0$-lower semicontinuity of $\Ep$ and Lemma~\ref{lemma:properties of eph}~(e) imply
$$\Ep(f) \leq \liminf_{n \to \infty} \Ep(f_n) = \liminf_{n \to \infty} \Eph(f_n) \leq \liminf_{n \to \infty}\hat{\tilde \cE}_\varphi(f_n) \leq \tilde \cE_\varphi(f). $$
This finishes the proof.
\end{proof}

\begin{definition}[Main part]
 The {\em main part of $\cE$} is defined by 
$$\Em:L^0(m) \to [0,\infty],\, \Em(f) := \sup\{ \Ep(f) \mid \varphi \in D(\cE) \text{ with } 0 \leq \varphi \leq 1\}.$$
The restriction of $\Em$ to $L^2(m)$ is called the {\em active main part of $\cE$} and is denoted by $\Em_a$.
\end{definition}

\begin{theorem}[Maximality of the main part]\label{theorem:maximality main part}
 The main part $\Em$ is a recurrent energy form that satisfies $\Ee \leq \Em$ and the active main part $\Em_a$ is a Dirichlet form that satisfies $\cE \leq \Em_a$.  Moreover, if $\tilde{\cE}$ is a Dirichlet form that dominates $\cE$, then also $\tilde{\cE}_{\rm e} \leq \Em$ and $\tilde{\cE} \leq \Em_a$.
\end{theorem}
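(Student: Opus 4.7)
The plan is to reduce everything to properties of the truncations $\Ep$ established in Theorem~\ref{theorem:properties of concatenated forms} and the fact that the family $\{\Ep : \varphi \in D(\cE),\ 0 \leq \varphi \leq 1\}$ is upward directed as a family of functionals on $L^0(m)$. The one step that requires real work is showing that the pointwise supremum of this directed family of quadratic forms is itself a quadratic form (and not merely a convex lower semicontinuous functional); I would invoke Appendix~\ref{appendix:monotone forms} on monotone forms precisely for this purpose. This is the main obstacle, and it is handled by an external result rather than by direct computation.

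First I would verify the directedness. Given $\varphi,\psi \in D(\cE)$ with $0 \leq \varphi,\psi \leq 1$, Lemma~\ref{lemma:contraction properties} yields $\varphi \vee \psi \in D(\cE)$ with $0 \leq \varphi \vee \psi \leq 1$. Lemma~\ref{lemma:properties of eph}~(c) then gives the pointwise inequalities $\Eph \leq \hat\cE_{\varphi \vee \psi}$ and $\hat\cE_\psi \leq \hat\cE_{\varphi \vee \psi}$ on $L^0(m)$, and taking closures preserves such pointwise inequalities, so $\Ep$ and $\cE_\psi$ are both pointwise dominated by $\cE_{\varphi \vee \psi}$. Hence the family is upward directed, and the monotone-forms result in the appendix produces an energy form $\Em$ as its pointwise supremum. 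Recurrence follows from $1 \in D(\Eph)$ with $\Eph(1) = \cE(\varphi) - \cE(\varphi,\varphi) = 0$ for every admissible $\varphi$.

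By Theorem~\ref{theorem:properties of concatenated forms}~(a), each $\Ep$ satisfies $\Ee \leq \Ep$ in the form ordering, equivalently $\Ep(f) \leq \Ee(f)$ pointwise on $L^0(m)$; taking the pointwise supremum over $\varphi$ preserves this, so $\Em \leq \Ee$ pointwise, which reads $\Ee \leq \Em$ in the form ordering. Restricting to $L^2(m)$ and using $D(\Ee) \cap L^2(m) = D(\cE)$ shows $\Em_a(f) \leq \cE(f)$ for every $f \in L^2(m)$. In particular $D(\cE) \subseteq D(\Em_a)$, which together with the density of $D(\cE)$ in $L^2(m)$ and the inherited Markovianness and $L^2$-lower semicontinuity of $\Em_a$ (the latter coming from the continuity of the embedding $L^2(m) \hookrightarrow L^0(m)$) shows that $\Em_a$ is a Dirichlet form with $\cE \leq \Em_a$.

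For maximality, let $\tilde\cE$ be a Dirichlet form dominating $\cE$. By Lemma~\ref{lemma:characterization of domination}, $D(\cE) \subseteq D(\tilde\cE)$, so every admissible $\varphi$ for $\cE$ is admissible for $\tilde\cE$. Theorem~\ref{theorem:properties of concatenated forms}~(a) applied to $\tilde\cE$ gives $\tilde\cE_{\rm e} \leq \tilde\cE_\varphi$, while Theorem~\ref{theorem:properties of concatenated forms}~(c) gives $\tilde\cE_\varphi \leq \Ep$; both translate to $\tilde\cE_{\rm e}(f) \geq \tilde\cE_\varphi(f) \geq \Ep(f)$ pointwise on $L^0(m)$. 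Taking the supremum over admissible $\varphi$ on the right yields $\tilde\cE_{\rm e}(f) \geq \Em(f)$, that is $\tilde\cE_{\rm e} \leq \Em$ in the form ordering, and restriction to $L^2(m)$ gives $\tilde\cE \leq \Em_a$. Apart from the appendix input guaranteeing that $\Em$ is quadratic, the entire argument is bookkeeping built on the preservation of pointwise inequalities under closure and supremum.
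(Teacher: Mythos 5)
Your overall architecture matches the paper's: directedness of the family $\{\Ep\}$, monotonicity in $\varphi$, and the comparison inequalities from parts (a) and (c) of Theorem~\ref{theorem:properties of concatenated forms}, with the maximality statement obtained by applying part (a) to $\tilde\cE$ (legitimate since $D(\cE)\subseteq D(\tilde\cE)$) and combining with part (c). All of that bookkeeping is sound. The problem is the one step you yourself single out as the main obstacle: that the pointwise supremum $\Em$ is a quadratic form. You discharge this by citing Appendix~\ref{appendix:monotone forms}, but that appendix contains no such result. Lemma~\ref{lemma:characterization monotonicity} there concerns a single quadratic form $q$ that is \emph{monotone in its argument} (i.e.\ $|f|\le|g|$ forces a comparison of $q(f)$ and $q(g)$) and characterizes this property as nonnegative definiteness when $D(q)$ is a lattice; it says nothing about suprema of directed families of forms. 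So the key step of your argument rests on a lemma that does not exist in the paper, and as written the proof is incomplete.

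The gap is easily filled, and you have already assembled the needed ingredient, namely directedness. The paper verifies the parallelogram identity for $\Em$ directly: since $I=\{\varphi\in D(\cE)\mid 0\le\varphi\le 1\}$ is upward directed under $\vee$ and $\varphi\mapsto\Ep(f)$ is monotone increasing for each fixed $f$, a sum of suprema of increasing nets over a directed set equals the supremum of the sums, so
\begin{equation*}
\Em(f+g)+\Em(f-g)=\sup_{\varphi\in I}\bigl(\Ep(f+g)+\Ep(f-g)\bigr)=\sup_{\varphi\in I}\bigl(2\Ep(f)+2\Ep(g)\bigr)=2\Em(f)+2\Em(g).
\end{equation*}
Homogeneity, lower semicontinuity (as a supremum of lower semicontinuous functionals), the Markov property and recurrence all pass to the supremum exactly as you say. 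Replace the appendix citation by this short computation and your proof coincides with the paper's.
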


\begin{proof}
As the supremum of lower semicontinuous functions on $L^0(m)$ the functional $\Em$ is lower semicontinuous. Moreover, the Markov property of $\Em$ and its recurrence follow from the Markov property  the recurrence of the $\Ep$. 

Next we prove that $\Em$ is a quadratic form. The homogeneity of $\Em$ follows easily from the homogeneity of $\Ep$. We let $I := \{\varphi \in D(\cE) \mid 0 \leq \varphi \leq 1\}$. For fixed $f \in L^0(m)$ the map $I \to [0,\infty],\, \varphi \mapsto \Ep(f)$ is monotone increasing and if $\varphi,\psi \in I$, then also $\varphi \vee \psi \in I$.  This monotonicity implies that for all $f,g \in L^0(m)$ we have
\begin{align*}
 \Em(f+g) + \Em(f-g) &= \sup_{\varphi \in I} \Ep(f+g) + \sup_{\varphi \in I}\Ep(f-g) \\
 &= \sup_{\varphi \in I} \left(  \Ep(f+g) +  \Ep(f-g) \right)\\
 &= \sup_{\varphi \in I} \left(  2\Ep(f) +  2\Ep(g) \right)\\
 &= 2 \sup_{\varphi \in I}   \Ep(f) +  2\sup_{\varphi \in I}\Ep(g) \\
 &= 2\Em(f)  + 2\Em(g).
\end{align*}
Therefore, $\Em$ is a quadratic form. 

Theorem~\ref{theorem:properties of concatenated forms} implies $\Ee \leq \Ep$ for all $\varphi \in I$ and therefore $\Ee \leq \Em$. As the $L^2$-restriction of an energy form, $\Em_a$ is clearly a Dirichlet form. The inequality $\cE \leq \Em_a$ follows from the statement for the extended Dirichlet form $\Ee$ and the identity $D(\Ee) \cap L^2(m) = D(\cE)$.

If $\tilde \cE$ is a Dirichlet form that dominates $\cE$,  Theorem~\ref{theorem:properties of concatenated forms} also shows  $\tilde \cE_\varphi \leq \Ep$ for all $\varphi \in I$. Moreover, what we have already proven applied to the form $\tilde \cE$ yields $\tilde \cE_{\rm e} \leq \tilde \cE_{\varphi}$ for any $\varphi \in I$. Combining these inequalities and taking a supremum over $\varphi$ shows $\tilde \cE_{\rm e} \leq \Em$. With the same argumentation as above for the form $\cE$, we also obtain $\tilde{\cE} \leq \Em_a$.
\end{proof}
\begin{remark}
 The previous theorem shows that $\Em_a$ is larger (in the sense of quadratic forms) than any Dirichlet form dominating $\cE$. Below we will also prove that   $\Em_a$  is a Dirichlet form that dominates $\cE$ so that $\Em_a$ is the maximal element in the cone of Dirichlet forms that dominate $\cE$.  
\end{remark}

For later purposes we note the following lower semicontinuity of $\Ep$ in the parameter $\varphi$.

\begin{lemma}\label{lemma:lsc in varphi}
 Let $(\varphi_n)$ a sequence in $D(\cE)$ and let $\varphi \in D(\cE)$ with $0\leq \varphi_n,\varphi \leq 1$. If $\varphi_n \to  \varphi$ with respect to $\|\cdot\|_\cE$, then for all $f \in L^0(m)$ we have
 $$\Ep(f) \leq \liminf_{n\to \infty} \cE_{\varphi_n}(f).$$
\end{lemma}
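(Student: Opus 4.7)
The plan is to prove the inequality first for bounded $f \in L^\infty(m)$, using the explicit formula $\Eph(f) = \cE(\varphi f) - \cE(\varphi f^2,\varphi)$ from Theorem~\ref{theorem:properties of concatenated forms}~(b), and then extend to arbitrary $f \in L^0(m)$ via the Markov property of the energy forms $\cE_{\varphi_n}$ combined with the $L^0(m)$-lower semicontinuity of $\Ep$.

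For the reduction step, let $f \in L^0(m)$ and set $f^{(k)} := (f \wedge k) \vee (-k)$. Since $x \mapsto (x\wedge k)\vee(-k)$ is a normal contraction, the Markov property of each $\cE_{\varphi_n}$ yields $\cE_{\varphi_n}(f^{(k)}) \leq \cE_{\varphi_n}(f)$ for all $n,k$. On the other hand $f^{(k)} \tom f$ as $k\to\infty$, so the lower semicontinuity of $\Ep$ gives $\Ep(f) \leq \liminf_k \Ep(f^{(k)})$. Once the inequality is established for bounded functions these two estimates combine to $\Ep(f) \leq \liminf_k \liminf_n \cE_{\varphi_n}(f^{(k)}) \leq \liminf_n \cE_{\varphi_n}(f)$.

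For the bounded case, fix $f \in L^\infty(m)$. We may assume $L := \liminf_n \cE_{\varphi_n}(f) < \infty$ and, after passing to a subsequence, that $\cE_{\varphi_n}(f) \to L$, so in particular $f \in D(\cE_{\varphi_n})$ for each $n$. Theorem~\ref{theorem:properties of concatenated forms}~(b) then gives $\varphi_n f \in D(\cE)$ and
$$\cE_{\varphi_n}(f) = \cE(\varphi_n f) - \cE(\varphi_n f^2,\varphi_n).$$
Applying Lemma~\ref{lemma:maximal silverstein extension technical lemma}~(a) to the Lipschitz map $C(x) = x^2 \wedge \|f\|_\infty^2$ yields
$$\cE(\varphi_n f^2)^{1/2} \leq 2\|f\|_\infty \cE(\varphi_n f)^{1/2} + 3\|f\|_\infty^2\cE(\varphi_n)^{1/2},$$
and Cauchy--Schwarz bounds $|\cE(\varphi_n f^2,\varphi_n)|$ by $\cE(\varphi_n f^2)^{1/2}\cE(\varphi_n)^{1/2}$. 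Since $\cE(\varphi_n)$ is bounded (by form-norm convergence $\varphi_n \to \varphi$), writing $a_n = \cE(\varphi_n f)^{1/2}$ the displayed identity becomes an inequality of the form $a_n^2 - C_1 a_n - C_2 \leq \cE_{\varphi_n}(f)$ with constants independent of $n$. This quadratic bound forces $(a_n)$, and hence $(\cE(\varphi_n f^2))$, to be uniformly bounded.

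Finally, since $\varphi_n \to \varphi$ in $L^2(m)$ and $f$ is bounded, $\varphi_n f \to \varphi f$ and $\varphi_n f^2 \to \varphi f^2$ in $L^2(m)$. Combined with the uniform form-norm bounds just obtained, Lemma~\ref{lemma:existence of a weakly convergent subnet} yields $\varphi f,\varphi f^2 \in D(\cE)$ together with weak convergence $\varphi_n f^2 \to \varphi f^2$ with respect to the form inner product. Since $\varphi_n \to \varphi$ strongly in form norm, a standard strong--weak argument gives $\cE(\varphi_n f^2,\varphi_n) \to \cE(\varphi f^2,\varphi)$, while the $L^2(m)$-lower semicontinuity of $\cE$ yields $\cE(\varphi f) \leq \liminf_n \cE(\varphi_n f)$. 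Hence $f \in D(\Eph)$, and using $\Ep|_{L^\infty(m)} = \Eph$,
$$\Ep(f) = \cE(\varphi f) - \cE(\varphi f^2,\varphi) \leq \liminf_n \cE(\varphi_n f) - \lim_n \cE(\varphi_n f^2,\varphi_n) = L.$$
The main obstacle is the uniform form bound on $(\varphi_n f)$: without it the symmetric cross term $\cE(\varphi_n f^2,\varphi_n)$ cannot be controlled and no weak limit can be extracted from the sequence $(\varphi_n f^2)$.
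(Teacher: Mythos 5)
Your proof is correct and follows essentially the same route as the paper: reduce to bounded $f$ by truncation using the Markov property of $\cE_{\varphi_n}$ and the $L^0$-lower semicontinuity of $\Ep$, then in the bounded case derive the uniform bound on $\cE(\varphi_n f)$ and $\cE(\varphi_n f^2)$ from the Lipschitz estimate of Lemma~\ref{lemma:maximal silverstein extension technical lemma}~(a), extract $\cE$-weak convergence of $\varphi_n f^2$ via Lemma~\ref{lemma:existence of a weakly convergent subnet}, and pass to the limit in the cross term using the strong form-norm convergence of $\varphi_n$. No gaps.
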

\begin{proof}
 We first prove the statement for $f \in L^\infty(m)$. Without loss of generality we can assume
 $$\lim_{n\to \infty} \cE_{\varphi_n}(f) = \liminf_{n\to \infty} \cE_{\varphi_n}(f) <  \infty.$$
 Since $D(\cE_{\varphi_n}) \cap L^\infty(m)  = D(\hat \cE_{\varphi_n})$, this implies $f \in D(\hat \cE_{\varphi_n})$ and $\cE_{\varphi_n}(f) = \hat \cE_{\varphi_n}(f)$ for each $n \in \N$. As in the proof of Theorem~\ref{theorem:properties of concatenated forms} we obtain the inequalities
 $$\cE(\varphi_n f^2)^{1/2} \leq 2 \|f\|_\infty \cE(\varphi_n f )^{1/2} + 3 \|f\|_\infty ^2 \cE(\varphi_n)^{1/2},$$
 and
 $$\cE(\varphi_n f)^{1/2} \left(\cE(\varphi_n f)^{1/2}  - 2\|f\|_\infty \cE(\varphi_n)^{1/2} \right) - 3\|f\|^2_\infty \cE(\varphi_n) \leq \hat \cE_{\varphi_n}(f).$$
 Hence, the boundedness of $(\cE(\varphi_n))$ and $(\hat \cE_{\varphi_n}(f))$ yields the boundedness of $(\cE(\varphi_n f))$ and  $(\cE(\varphi_n f^2))$. Since $\varphi_n f \to \varphi f$ in $L^2(m)$, the $L^2$-lower semicontinuity of $\cE$ yields $\varphi f \in D(\cE)$ and so $f \in D(\Eph)$ by Lemma~\ref{lemma:properties of eph}. Moreover, the $L^2$-convergence $\varphi_n f^2 \to \varphi  f^2$ and the $\cE$-boundedness of $(\varphi_n f^2)$ yields $\varphi_n f^2 \to \varphi  f^2$ $\cE$-weakly. Since also $\varphi_n \to \varphi$ in form norm, we obtain 
 $$\cE(\varphi_n f^2,\varphi_n) \to \cE(\varphi f^2,\varphi) \text{, as }n\to \infty.$$
The $L^2$-lower semicontinuity of $\cE$ and this observation yield
 $$\Ep(f) = \Eph(f) = \cE(\varphi f) - \cE(\varphi f^2,\varphi) \leq \liminf_{n \to \infty} \cE(\varphi_n f) - \lim_{n\to \infty} \cE(\varphi_n f^2,\varphi_n) = \liminf_{n\to \infty} \cE_{\varphi_n}(f).$$
 For general $f \in L^0(m)$ we consider $f^{(k)} := (f\wedge k) \vee (-k)$. Using the $L^0$-lower semicontinuity of $\Ep$,  what we have already proven for bounded functions and the Markov property of $\cE_{\varphi_n}$, we obtain
 $$\Ep(f) \leq \liminf_{k\to \infty} \Ep(f^{(k)}) \leq \liminf_{k\to \infty} \liminf_{n\to \infty}\cE_{\varphi_n}(f^{(k)}) \leq \liminf_{n\to \infty}\cE_{\varphi_n}(f).$$
 This finishes the proof.
\end{proof}

For a measurable subset $F$ of $X$ we let $D(\cE)_F = \{f \in D(\cE) \mid f\1_{X \setminus F} = 0\}$. Following \cite{AH}, we call an ascending sequence of measurable subsets $(F_n)_{n \in \N}$ of $X$ a {\em (measurable)   $\cE$-nest} if 
\begin{itemize}
 \item for each $n\in \N$ there exists $\varphi_n \in D(\cE)$ with $\varphi_n \geq \1_{F_n},$
 \item $\bigcup_{n \in \N} D(\cE)_{F_n}$ is dense in $D(\cE)$ with respect to $\|\cdot\|_\cE$.
\end{itemize}
According to \cite[Lemma~3.1]{AH} there always exits a nest. For the discussion in Section~\ref{section:regular} the following alternative formula for $\Em$ will be important. It shows that the supremum in the definition of $\Em$ can be taken along a suitable increasing sequence of functions. 
\begin{lemma} \label{lemma:alternative formula em}
 Let $(F_n)$ be an $\cE$-nest and let $(\chi_n)$ be an increasing sequence  in $D(\cE)$ with $\1_{F_n} \leq \chi_n \leq 1$. For all $f \in L^0(m)$ we have
 $$\Em(f) = \lim_{n \to \infty} \cE_{\chi_n}(f).$$
\end{lemma}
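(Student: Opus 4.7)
The plan splits naturally into the two inequalities.

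The inequality $\lim_n \cE_{\chi_n}(f) \leq \Em(f)$ is immediate from the definition of $\Em$: since $\chi_n \in D(\cE)$ with $0 \leq \chi_n \leq 1$, each $\cE_{\chi_n}(f)$ appears in the supremum defining $\Em(f)$. Furthermore, the hypothesis $\chi_n \leq \chi_{n+1} \leq 1$ together with Theorem~\ref{theorem:properties of concatenated forms}~(a) (applied with $\varphi = \chi_n$, $\psi = \chi_{n+1}$) ensures that $n \mapsto \cE_{\chi_n}(f)$ is monotone increasing, so the limit exists and is at most $\Em(f)$.

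For the reverse inequality it suffices to prove $\Ep(f) \leq \lim_n \cE_{\chi_n}(f)$ for every $\varphi \in D(\cE)$ with $0 \leq \varphi \leq 1$; taking the supremum over such $\varphi$ then closes the argument. By the nest density I choose $\psi_k \in D(\cE)_{F_{n_k}}$ with $\psi_k \to \varphi$ in $\|\cdot\|_\cE$ and set $\varphi_k := (\psi_k \vee 0) \wedge 1$. The truncation preserves vanishing off $F_{n_k}$, so $\varphi_k \in D(\cE)_{F_{n_k}}$ with $0 \leq \varphi_k \leq 1$; moreover, since $\chi_{n_k} = 1$ on $F_{n_k}$, one has $\varphi_k \leq \chi_{n_k}$, and Theorem~\ref{theorem:properties of concatenated forms}~(a) then yields $\cE_{\varphi_k}(f) \leq \cE_{\chi_{n_k}}(f) \leq \lim_n \cE_{\chi_n}(f)$.

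To apply Lemma~\ref{lemma:lsc in varphi} and conclude $\Ep(f) \leq \liminf_k \cE_{\varphi_k}(f)$, I still need $\varphi_k \to \varphi$ in $\|\cdot\|_\cE$. The key observation is that $\varphi = (\varphi \vee 0)\wedge 1$ is itself a fixed point of the truncation: the pointwise estimate $|\varphi_k - \varphi| \leq |\psi_k - \varphi|$ gives $L^2$-convergence, the Markov property $\cE(\varphi_k) \leq \cE(\psi_k) \to \cE(\varphi)$ combined with the $L^2$-lower semicontinuity of $\cE$ forces $\cE(\varphi_k) \to \cE(\varphi)$, and reflexivity of $(D(\cE),\|\cdot\|_\cE)$ yields weak convergence $\varphi_k \to \varphi$ in $D(\cE)$; weak convergence together with convergence of norms in the Hilbert space then gives strong form-norm convergence. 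This last step is the main obstacle, because normal contractions are not form-norm continuous on $D(\cE)$ in general — the argument works only because $\varphi$ is fixed by the truncation, which prevents any loss of energy through it.
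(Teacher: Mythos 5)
Your proposal is correct and follows essentially the same route as the paper's proof: monotonicity of $\Ep$ in $\varphi$ for the easy inequality, and for the reverse inequality the approximation of an arbitrary $\varphi$ by functions from $\bigcup_n D(\cE)_{F_n}$ dominated by the $\chi_{n_k}$, concluded via Lemma~\ref{lemma:lsc in varphi}. The only difference is that you carefully justify the truncation step (form-norm convergence of $(\psi_k\vee 0)\wedge 1$ to $\varphi$), which the paper dispatches with ``without loss of generality''; your justification is sound and matches what Lemma~\ref{lemma:existence of a weakly convergent subnet} provides.
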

\begin{proof}
 It follows from the monotonicity of $\Ep$ in the parameter $\varphi$ that for any $f \in L^0(m)$ the limit $\lim_{n \to \infty} \cE_{\chi_n}(f)$ exists and satisfies 
 $$\lim_{n \to \infty} \cE_{\chi_n}(f) \leq \Em(f).$$
Thus, it suffices to prove the opposite inequality by showing that for any $\varphi \in D(\cE)$ with $0 \leq \varphi \leq 1$ and any $f \in L^0(m)$ we have    
 $$\Ep(f) \leq \lim_{n \to \infty} \cE_{\chi_n}(f).$$
 To this end, we use that $(F_n)$ is a nest and choose a sequence $(\varphi_k)$ in $\bigcup_n D(\cE)_{F_n}$ such that $\varphi_k \to \varphi$ with respect to $\|\cdot\|_\cE$. Without loss of generality we can assume $0 \leq \varphi_k \leq 1$. By the choice of $\varphi_k$ there exists $n_k$ such that $\varphi_k$ vanishes outside $F_{n_k}.$ Since $\chi_n \geq \1_{F_{n_k}}$, this implies $\varphi_k \leq \chi_{n_k}$ and we obtain
 $$\cE_{\varphi_k}(f) \leq \cE_{\chi_{n_k}}(f) \leq \lim_{n \to \infty} \cE_{\chi_n}(f).$$
 With this at hand, Lemma~\ref{lemma:lsc in varphi} yields
 $$\Ep(f) \leq \liminf_{k \to \infty} \cE_{\varphi_k}(f) \leq \lim_{n \to \infty} \cE_{\chi_n}(f).$$
 This finishes the proof.
\end{proof}

\begin{example}[Weighted manifolds]\label{example:mainfolds}
 For the notation used in this example we refer the reader to \cite{Gri}. Let $(M,g,\mu)$ a weighted Riemannian manifold and let $V \in L^1_{\rm loc}(\mu)$. We define the quadratic form $\D_0$ by letting $D(\D_0) = C_c^\infty(M)$ on which it acts by
 $$\D_0(f) = \int_M g(\nabla f, \nabla f) d\mu + \int_M V f^2 d\mu.$$
 The closure $\D$ of $\D_0$ in $L^2(\mu)$ is a regular Dirichlet form with domain $D(\D) = W_0^1(M) \cap L^2(V \cdot \mu)$. The domain of the main part of $\D$ is  $D(\D^{(M)}) = \{f \in L^2_{\rm loc}(\mu) \mid \nabla f \in \vec L^2(\mu)\}$ on which it acts by
 $$\D^{(M)}(f) = \int_M g(\nabla f, \nabla f) d\mu.$$
 Therefore, the active main part of $\D$ is the Dirichlet form of the weighted Laplacian on $(M,g,\mu)$ with Neumann boundary conditions at infinity, i.e., $D(\D^{(M)}_a) = W^1(M) =  \{f \in L^2(\mu) \mid \nabla f \in \vec L^2(\mu)\}$.
 \begin{proof}
 We first prove $\{f \in L^2_{\rm loc}(\mu) \mid \nabla f \in \vec L^2(\mu)\} \subseteq D(\D^{(M)})$ and the formula 
 $$\D^{(M)}(f) = \int_M g(\nabla f, \nabla f) d\mu$$
 for $f \in L^2_{\rm loc}(\mu)$ with $\nabla f \in L^2(\mu)$.  For $f \in L^2 (\mu)$ with $\nabla f \in  \vec L^2(\mu)$  and $\varphi \in C_c^\infty(M)$ with $0 \leq \varphi \leq 1$ we have $\varphi f \in L^2(m)$ and it follows from some distributional chain rule that $\nabla(\varphi f) \in \vec L^2(\mu)$. Since $\varphi f$ has compact support, \cite[Lemma~5.5]{Gri} implies $\varphi f \in W^1_0(M)$. If, additionally, $f \in L^\infty(\mu)$, we have also have $\varphi f \in L^2(V \cdot \mu)$ and therefore $\varphi f \in D(\D)$. In this case, another application of the chain rule shows
 $$\D_\varphi(f) = \D(\varphi f) - \D(\varphi f^2,\varphi) = \int_M \varphi^2 g(\nabla f,\nabla f) d\mu.$$
 Let now $f \in L^2_{\rm loc}(\mu)$ with $\nabla f \in  \vec L^2(\mu)$ and for $n \in \N$ set $f^{(n)} = (f \wedge n) \vee(-n)$.  Since $\D_\varphi$ is an energy form, we have $\D_\varphi(f) = \lim_n \D_\varphi(f^{(n)})$. Moreover, $\nabla f^{(n)} \to \nabla f$ in $L^2_{\rm loc}(\mu)$. Therefore, the above  identity extends to unbounded functions as well. Letting $\varphi \nearrow 1$ and using Lemma~\ref{lemma:alternative formula em} shows $\{f \in L^2_{\rm loc}(\mu) \mid \nabla f \in L^2(\mu)\}$ and the desired formula.
 
 It remains to prove $D(\D^{(M)}) \subseteq  \{f \in L^2_{\rm loc}(\mu) \mid \nabla f \in \vec L^2(\mu)\}$. Let $f \in D(\D^{(M)}) \cap L^\infty(m)$. Then for each $\varphi \in C_c^\infty(M)$ with $0 \leq \varphi \leq 1$ we have $\varphi f \in \D$ and therefore $\nabla(\varphi f) \in \vec L^2(\mu)$. This shows $\nabla f \in \vec L^2_{\rm loc}(\mu)$. Similar computations as above yield
 $$\D_\varphi(f) = \int_M \varphi^2 g(\nabla f, \nabla f) d\mu$$
 and after letting $\varphi \nearrow 1$ we arrive at $\nabla f \in \vec L^2(\mu)$. For unbounded $f \in D(\D^{(M)})$ what we have already proven and Lemma~\ref{lemma:bounded approximation} yields
 $$\sup_n \int_M g(\nabla f^{(n)},\nabla f^{(n)}) d \mu =  \sup_n \D^{(M)}(f^{(n)}) = \D^{(M)}(f) < \infty.$$
 With this at hand,  a local Poincaré inequality yields $f \in L^2_{\rm loc}(\mu)$ and $\nabla f \in L^2(\mu)$ (for more details on this conclusion see \cite[Proof of Proposition~7.1]{HKLMS}).
 \end{proof}
 \end{example}

\begin{example}[Weighted graphs]
For  a background on Dirichlet forms associated with weighted graphs we refer the reader to \cite{KL}. Let $X$ a countably infinite set. Any function $m:X \to (0,\infty)$ induces a measure of full support on $X$ by letting 
$$m(A) = \sum_{x \in A}m(x).$$
A weighted graph over $X$ is a pair $(b,c)$ consisting of a function $c:X \to [0,\infty)$ and a function $b:X \times X \to [0,\infty)$ with
\begin{itemize}
 \item[(b0)] $b(x,x) = 0$  for all $x \in X$, 
 \item[(b1)] $b(x,y) = b(y,x)$ for all $x,y \in X$,
 \item[(b2)] $\sum_{y\in X} b(x,y) < \infty$ for all $x \in X$.
\end{itemize}
The value $b(x,y)$ is considered to be the edge weight of the edge $(x,y)$. We define the form $Q_0$ by letting  $D(Q_0) = C_c(X)$, the finitely supported functions on $X$, on which $Q_0$ acts by
$$Q_0(f) = \sum_{x,y\in X} b(x,y)(f(x)-f(y))^2 + \sum_{x \in X} c(x) f(x)^2. $$
The closure  $Q$ of $Q_0$ on $\ell^2(X,m)$ is a regular Dirichlet form. Then, the domain of the main part of $Q$ is  
$$D(Q^{(M)}) = \{f :X \to \R \mid \sum_{x,y\in X} b(x,y)(f(x)-f(y))^2 < \infty \}$$
on which it acts by
 $$Q^{(M)}(f) = \sum_{x,y\in X} b(x,y)(f(x)-f(y))^2.$$
 \begin{proof}
  It follows from the definitions  and a simple approximation argument that for $\varphi \in C_c(X)$ with $0\leq \varphi \leq 1$ we have $D(Q_\varphi) = \{f:X\to \R\}$ on which it acts by
 $$Q_\varphi(f) = \sum_{x,y\in X} b(x,y) \varphi(x) \varphi(y) (f(x) - f(y))^2.$$
 This observation and Lemma~\ref{lemma:alternative formula em} imply the claim after letting $\varphi \nearrow 1$. 
 \end{proof}
\end{example}

\subsection{The killing part and the reflected Dirichlet form}

The given examples show that in general $\Em$ is not an extension of $\Ee$ and   $\Em_a$ is not an extension of $\cE$.  However, on $D(\cE)$ the form $\cE$  is  a perturbation of $\Em$ by a monotone quadratic form. In this subsection we (prove and) employ this monotonicity to construct the killing part of $\cE$ and discuss its properties.

We define the {\em preliminary killing part of $\cE$} by 
$$\Ekh:L^0(m) \to [0,\infty],\, \Ekh(f) := \begin{cases}
                                            \cE(f) - \Em(f) &\text{if } f \in D(\cE), \\
                                            \infty &\text{else.}
                                           \end{cases}
$$
It follows from the considerations in the previous subsection that $\Ekh$ is a nonnegative quadratic form with domain $D(\cE)$. In contrast to $\Eph$ it is in general not closable on $L^0(m)$, see \cite[Remark~3.50]{Schmi}.  Instead, we will use the following monotonicity property of $\Ekh$ to  extend it to a larger domain.

\begin{lemma}\label{lemma:monotonicity of ekh}
 For $f,g \in D(\cE)$ the inequality $|f| \leq |g|$ implies $\Ekh(f) \leq \Ekh(g)$.
\end{lemma}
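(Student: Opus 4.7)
My approach is to derive an explicit representation of $\Ekh$ that is manifestly monotone in $|\cdot|$. As a first step, I reduce to bounded $f$ and $g$. Since Theorem~\ref{theorem:maximality main part} gives $D(\cE) \subseteq D(\Em)$, applying Lemma~\ref{lemma:bounded approximation} to both $\Ee$ and $\Em$ yields $\Ekh(f^{(k)}) \to \Ekh(f)$ and $\Ekh(g^{(k)}) \to \Ekh(g)$ as $k \to \infty$. Since $|f^{(k)}| \leq |g^{(k)}|$, it suffices to treat bounded $f, g \in D(\cE)$.

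The key computation uses the approximating forms $\cE^{(\alpha)}$ from the proof of Lemma~\ref{lemma:properties of eph}. Expanding $\hat\cE^{(\alpha)}_\varphi(h) = \cE^{(\alpha)}(\varphi h) - \cE^{(\alpha)}(\varphi h^2,\varphi)$ via the definition $\cE^{(\alpha)}(a,b) = \as{a,b} - \alpha\as{a,G_\alpha b}$ and the symmetry of the resolvent kernel $p_\alpha(x,dy)\,dm(x)$, a direct calculation shows that, for bounded $h \in D(\cE)$ and $\varphi \in D(\cE)$ with $0 \leq \varphi \leq 1$,
\[
\cE^{(\alpha)}(h) - \hat\cE^{(\alpha)}_\varphi(h) = \int h^2 (1 - \alpha G_\alpha 1)\,dm + \frac{\alpha}{2}\iint[1 - \varphi(x)\varphi(y)](h(x)-h(y))^2\,p_\alpha(x,dy)\,dm(x).
\]
Both summands are nonnegative (using $\alpha G_\alpha 1 \leq 1$ by sub\-Markov\-ianity and $0 \le \varphi \le 1$); the first is manifestly monotone in $|h|$, while the second, a jump-type term, is not, but disappears as $\varphi \nearrow 1$.

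Now I choose $(\chi_n) \subset D(\cE)$ with $0 \leq \chi_n \leq 1$ from an $\cE$-nest such that $\chi_n \nearrow 1$ pointwise, so that by Lemma~\ref{lemma:alternative formula em} $\hat\cE_{\chi_n}(h) \nearrow \Em(h)$. For each fixed $\alpha$, the jump-type summand decreases monotonically to $0$ as $n \to \infty$ by dominated convergence. Multiplying by $\alpha$ and passing to the iterated limit --- first $\alpha\to\infty$ (using $\alpha\cE^{(\alpha)}(h) \nearrow \cE(h)$ and $\alpha\hat\cE^{(\alpha)}_{\chi_n}(h) \to \hat\cE_{\chi_n}(h)$ from Lemma~\ref{lemma:properties of eph}), then $n \to \infty$ --- gives $\Ekh(h) = \lim_n[\cE(h) - \hat\cE_{\chi_n}(h)]$; exchanging the two monotone limits yields the representation
\[
\Ekh(h) = \lim_{\alpha\to\infty} \alpha \int h^2 (1 - \alpha G_\alpha 1)\,dm.
\]
Since the integrand is nonnegative and pointwise monotone in $|h|$, the inequality $\Ekh(f) \leq \Ekh(g)$ for $|f| \leq |g|$ follows at once by passing to the limit.

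The main obstacle is justifying the interchange of the two monotone limits $\alpha\to\infty$ and $n\to\infty$. Both approximations are monotone in their parameters ($\alpha\cE^{(\alpha)}(h)$ is increasing in $\alpha$ by a Yosida-type spectral argument, and $\hat\cE_{\chi_n}(h)$ is increasing in $n$ along the nest), while the nonnegative jump-type error $\frac{\alpha^2}{2}\iint[1-\chi_n(x)\chi_n(y)](h(x)-h(y))^2\,p_\alpha\,dm$ is decreasing in $n$ and bounded by $\alpha\cE^{(\alpha)}(h) \leq \cE(h)$ uniformly in $\alpha$, so a double monotone-convergence argument should deliver the swap. If this turns out to be delicate, an alternative is to work directly on simple functions supported on a common partition of finite-measure sets and exploit the explicit coefficients $b_{ij}^\varphi, c_i^\varphi$ from the proof of Lemma~\ref{lemma:properties of eph}, in which the same killing-versus-jump splitting is visible at the level of finite sums, and then to extend to bounded $f, g \in D(\cE)$ by $L^2$-density and lower semicontinuity of the forms involved.
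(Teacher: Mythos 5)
There is a genuine gap, and it sits exactly where you flagged it: the interchange of the limits $\alpha\to\infty$ and $n\to\infty$. Write $A_\alpha:=\alpha\as{h^2,(1-\alpha G_\alpha 1)}_2$, $B_{\alpha,n}$ for the jump remainder, and $C_{\alpha,n}:=A_\alpha+B_{\alpha,n}=\alpha\cE^{(\alpha)}(h)-\alpha\hat\cE^{(\alpha)}_{\chi_n}(h)$. What you can get from $A_\alpha\le C_{\alpha,n}$ and $\lim_\alpha C_{\alpha,n}=\cE(h)-\hat\cE_{\chi_n}(h)$ is only $\limsup_\alpha A_\alpha\le\Ekh(h)$. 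The monotonicity argument needs the reverse bound $\Ekh(h)\le\liminf_\alpha A_\alpha$, and separate monotonicity in the two parameters cannot deliver it: even if $C_{\alpha,n}$ were increasing in $\alpha$ and (as it is) decreasing in $n$, the iterated limits satisfy only $\sup_\alpha\inf_n C_{\alpha,n}\le\inf_n\sup_\alpha C_{\alpha,n}$, i.e.\ $\lim_\alpha\lim_n\le\lim_n\lim_\alpha$ --- precisely the inequality you already have for free, not the one you need. The uniform bound $B_{\alpha,n}\le\cE(h)$ gives domination but not uniform smallness; the dominated-convergence argument for $B_{\alpha,n}\to0$ uses the measure $\alpha^2 p_\alpha\otimes m$, which changes with $\alpha$, so there is no uniformity in $\alpha$. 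What is actually required is that the approximating jump expressions concentrate, uniformly in $\alpha$, on sets where $\chi_n$ is eventually $1$ --- this is essentially the existence and tightness of the Beurling--Deny jump measure, which is exactly the machinery the abstract ($\sigma$-finite measure space, no topology) setting of this lemma does not provide. Two secondary issues: the kernel $p_\alpha(x,dy)$ need not exist on an abstract measure space (your fallback to simple functions and the coefficients $b_{ij}^\varphi,c_i^\varphi$ addresses the algebra but not the limit interchange), and the monotonicity of $A_\alpha$ in $\alpha$ does not follow from the spectral theorem since $1\notin L^2(m)$ in general.

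For comparison, the paper's proof avoids any representation of $\Ekh$. After truncating $f$ and $g$ away from $0$ (so that $\varphi_\varepsilon=(\varepsilon^{-1}g)\wedge1$ equals $1$ on the relevant supports), it reduces the claim to $\cE(g_\varepsilon^2-f_\varepsilon^2,\varphi)\ge0$ for cut-offs $\varphi\ge\varphi_\varepsilon$, and proves this single inequality by the variational trick $\cE(\varphi)=\cE\bigl((\varphi+\alpha(g_\varepsilon^2-f_\varepsilon^2))\wedge1\bigr)\le\cE(\varphi)+2\alpha\cE(g_\varepsilon^2-f_\varepsilon^2,\varphi)+\alpha^2\cE(g_\varepsilon^2-f_\varepsilon^2)$, letting $\alpha\to0+$. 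If you want to salvage your representation $\Ekh(h)=\lim_\alpha A_\alpha$, you would have to supply the uniform-in-$\alpha$ control of $B_{\alpha,n}$ separately (in the regular case this is the content of the Beurling--Deny decomposition), at which point the proof is far heavier than the statement warrants.
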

\begin{proof}
 Let $f,g \in D(\cE) \cap L^\infty(m)$ with $|f|\leq |g|$ be given. For $\varepsilon > 0$ we set $f_\varepsilon := f - (f \wedge \varepsilon) \vee (-\varepsilon)$, $g_\varepsilon := g - (g \wedge \varepsilon) \vee (-\varepsilon)$ and $\varphi_\varepsilon := (\varepsilon^{-1}g) \wedge 1$ such that $\varphi_\varepsilon = 1$ on $\{f_\varepsilon > 0\}$ and $\{g_\varepsilon > 0\}$.    According to \cite[Theorem~1.4.2]{FOT}, we have $f_\varepsilon \to f$ and $g_\varepsilon \to g$ with respect to $\cE$, as $\varepsilon \to 0+$. Since $\Ekh$ is continuous with respect to $\cE$ convergence, this implies
 \begin{align*}
  \Ekh(g) - \Ekh(f) &= \lim_{\varepsilon \to 0+} \left( \Ekh(g_\varepsilon) - \Ekh(f_\varepsilon)\right)\\
  &=  \lim_{\varepsilon \to 0+} \left( \cE(g_\varepsilon) - \Em(g_\varepsilon) - \cE(f_\varepsilon) +  \Em(f_\varepsilon)\right)\\
  &= \lim_{\varepsilon \to 0+}\lim_{ \varphi \in D(\cE),\, \varphi \nearrow 1}  \left( \cE(g_\varepsilon) - \Ep(g_\varepsilon) - \cE(f_\varepsilon) +  \Ep(f_\varepsilon)\right).
 \end{align*}
 Moreover, for any $\varphi \in D(\cE)$ with $\varphi_\varepsilon \leq \varphi \leq 1$  we have
$$
 \cE(g_\varepsilon) - \Ep(g_\varepsilon) - \cE(f_\varepsilon) +  \Ep(f_\varepsilon) =  \cE(g_\varepsilon) - \Eph(g_\varepsilon) - \cE(f_\varepsilon) +  \Eph(f_\varepsilon)= \cE(g_\varepsilon^2 - f_\varepsilon^2,\varphi).
$$
Since $g_\varepsilon^2 - f_\varepsilon^2 \geq 0$ and $\varphi = 1$ on  $\{g_\varepsilon^2 - f_\varepsilon^2  > 0\}$, for any $\alpha > 0$ we obtain
$$\cE(\varphi) = \cE\left( \left(\varphi +\alpha(g_\varepsilon^2 - f_\varepsilon^2) \right) \wedge 1 \right) \leq \cE(\varphi) + 2\alpha \cE(g_\varepsilon^2 - f_\varepsilon^2,\varphi) + \alpha^2 \cE(g_\varepsilon^2 - f_\varepsilon^2). $$
Rearranging this inequality and letting $\alpha \to 0+$ yields $\cE(g_\varepsilon^2 - f_\varepsilon^2,\varphi) \geq 0$ and finishes the proof.
\end{proof}

\begin{definition}[Killing part]
 The {\em killing part} of $\cE$ is defined by
$$\Ek:L^0(m) \to [0,\infty],\, \Ek(f):= \begin{cases}
                        \sup\{\Ekh(g) \mid g \in D(\cE) \text{ with } |g| \leq |f|\} &\text{if } f \in D(\Em),\\
                        \infty &\text{else}.
                       \end{cases}$$
\end{definition}

It follows from the previous lemma that $\Ek$ is an extension of $\Ekh$. In order to prove further properties of $\Ek$ we need the following alternative formula.

\begin{lemma} \label{lemma:alternative formula ek}
 For all $f \in D(\Em)$ we have
 $$\Ek(f) = \sup\{\Ekh(\varphi f^{(n)}) \mid n \in \N, \varphi \in D(\cE) \text{ with } 0 \leq \varphi \leq 1\},$$
 where $f^{(n)} = (f \wedge n) \vee (-n)$.
\end{lemma}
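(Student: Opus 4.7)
The plan is to establish both inequalities separately; write $K(f)$ for the right-hand side. For the easy direction $K(f) \leq \Ek(f)$: since $f \in D(\Em)$, the Markov property gives $f^{(n)} \in D(\Em) \cap L^\infty(m)$, and from $\Em(h) \geq \Ep(h)$ (immediate from the definition of $\Em$) we get $f^{(n)} \in D(\Ep) \cap L^\infty(m)$. Theorem~\ref{theorem:properties of concatenated forms}~(b) then yields $\varphi f^{(n)} \in D(\cE)$, and since $|\varphi f^{(n)}| \leq |f|$, each $\varphi f^{(n)}$ is admissible in the supremum defining $\Ek(f)$.

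For the converse $\Ek(f) \leq K(f)$, fix $g \in D(\cE)$ with $|g| \leq |f|$; the goal is $\Ekh(g) \leq K(f)$. First I reduce to bounded nonnegative $g$: the truncations $g^{(N)}$ converge to $g$ in $\cE$-norm by \cite[Theorem~1.4.2]{FOT}, so $\cE(g^{(N)}) \to \cE(g)$; the Markov property of $\Em$ together with its lower semicontinuity on $L^0(m)$ forces $\Em(g^{(N)}) \to \Em(g)$, whence $\Ekh(g^{(N)}) \to \Ekh(g)$. Using Lemma~\ref{lemma:monotonicity of ekh} I further replace $g$ and $f$ by their absolute values, arriving at $0 \leq g \leq f^{(N)}$ with $\|g\|_\infty \leq N$.

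Next, I approximate $g$ using a nest. Fix an $\cE$-nest $(F_n)$ with increasing associated $(\chi_n) \subseteq D(\cE)$, $\1_{F_n} \leq \chi_n \leq 1$. Density of $\bigcup_n D(\cE)_{F_n}$ in $D(\cE)$ gives $g_k \in D(\cE)_{F_{n_k}}$ with $g_k \to g$ in $\cE$-norm. The lattice modification $\tilde g_k := (g_k \wedge g) \vee (-g)$ still lies in $D(\cE)_{F_{n_k}}$ (since $g \geq 0$ forces it to vanish outside $F_{n_k}$), satisfies $|\tilde g_k| \leq g$, and admits the pointwise estimate $|\tilde g_k - g| \leq |g_k - g|$ by a direct case check; Lemma~\ref{lemma:contraction properties} thus gives $\tilde g_k \to g$ in $\cE$-norm. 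Renaming $\tilde g_k \mapsto g_k$, the identity $g_k = \chi_{n_k} g_k$ (valid because $\chi_{n_k} \equiv 1$ on $F_{n_k} \supseteq \supp g_k$) yields
$$|g_k| = \chi_{n_k} |g_k| \leq \chi_{n_k} g \leq \chi_{n_k} f^{(N)} = |\chi_{n_k} f^{(N)}|,$$
and Lemma~\ref{lemma:monotonicity of ekh} gives $\Ekh(g_k) \leq \Ekh(\chi_{n_k} f^{(N)}) \leq K(f)$.

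It remains to pass to the limit $k \to \infty$: $\cE(g_k) \to \cE(g)$ by $\cE$-norm convergence, and Theorem~\ref{theorem:maximality main part} gives $\Em(h) \leq \cE(h)$ on $D(\cE)$, so $\Em(g_k - g)^{1/2} \leq \cE(g_k - g)^{1/2} \to 0$. The triangle inequality for the seminorm $\Em^{1/2}$ on $D(\Em)$ then forces $\Em(g_k) \to \Em(g)$, hence $\Ekh(g_k) \to \Ekh(g)$ and finally $\Ekh(g) \leq K(f)$. The main obstacle is the approximation step: the raw density approximants from the nest need not respect the pointwise bound $|g_k| \leq g$ that the monotonicity argument demands, and the lattice modification $(g_k \wedge g) \vee (-g)$ is what simultaneously enforces this bound, preserves the $\cE$-norm convergence, and preserves the support in $F_{n_k}$.
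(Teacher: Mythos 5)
Your easy direction and the reduction to bounded nonnegative $g$ with $0 \leq g \leq |f|^{(N)}$ are sound, and your strategy for the hard direction is genuinely different from the paper's: you approximate an arbitrary competitor $g$ by functions supported in a nest and dominate them by $\chi_{n_k}f^{(N)}$, whereas the paper picks a maximizing sequence $(f_l)$ for $\Ek(f)$, truncates it, and multiplies by the cutoffs $\varphi_{l,\varepsilon}=f_l/(\varepsilon+f_l)$ manufactured from $f_l$ itself, so no nest is needed there. However, your argument has a genuine gap at the lattice modification step. Lemma~\ref{lemma:contraction properties} requires control of \emph{both} the values $|h(x)|$ \emph{and} the increments $|h(x)-h(y)|$, and you only verify the pointwise bound $|\tilde g_k-g|\leq|g_k-g|$ on values. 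Writing $u_k=g_k-g$, one has $\tilde g_k-g=(u_k\wedge 0)\vee(-2g)$, whose increments are not controlled by those of $u_k$ alone because the expression depends on $g$. Concretely, on a two-point space with $\cE(h)=(h(x)-h(y))^2$, taking $g(x)=1$, $g(y)=100$ and $g_k=g-10$ gives $\cE(g_k-g)=0$ but $\tilde g_k-g=(-2,-10)$, so $\cE(\tilde g_k-g)=64$. Hence the inequality $\cE(\tilde g_k-g)\leq\cE(g_k-g)$ that your appeal to Lemma~\ref{lemma:contraction properties} would require is false, and $\cE$-norm convergence $\tilde g_k\to g$ does not follow; form-norm continuity of the two-variable lattice operations is not available in general (even for a single normal contraction $C$, only weak convergence of $C(u_n)$ is guaranteed).

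The gap is repairable without changing your architecture, because you only need $\Ekh(g)\leq\liminf_k\Ekh(\tilde g_k)$, not convergence. You do have $\tilde g_k\to g$ in $L^2(m)$ (from $|\tilde g_k-g|\leq|g_k-g|$), together with the uniform bounds $\cE(\tilde g_k)^{1/2}\leq\cE(g_k)^{1/2}+2\cE(g)^{1/2}$ and $\Em(\tilde g_k)\leq\cE(\tilde g_k)$. Lemma~\ref{lemma:existence of a weakly convergent subnet} then gives $\tilde g_k\to g$ both $\cE$-weakly and $\Em$-weakly, hence $\Ekh$-weakly, and lower semicontinuity of the nonnegative quadratic form $\Ekh$ with respect to weak convergence of its bilinear form yields $\Ekh(g)\leq\liminf_k\Ekh(\tilde g_k)\leq K(f)$; this is exactly the mechanism the paper uses in the lower-semicontinuity part of Theorem~\ref{lemma:properties of ek}. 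Alternatively, passing to Ces\`aro means of a subsequence via Banach--Saks restores strong convergence while preserving both the domination $|\cdot|\leq g$ and the support condition.
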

\begin{proof}
For $f \in D(\Em)$ we let 
$$k(f) := \sup\{\Ekh(\varphi f^{(n)}) \mid n \in \N, \varphi \in D(\cE) \text{ with } 0 \leq \varphi \leq 1\}.$$
Since $D(\cE) \cap L^\infty(m)$ is an algebraic ideal in $D(\Em) \cap L^\infty(m)$ and $|\varphi f^{(n)}| \leq |f|$, the monotonicity of $\Ekh$ on $D(\cE)$ implies $k(f) \leq \Ek(f)$. 

In order to deduce the opposite inequality, we use the monotonicity of $\Ekh$ and that $D(\cE)$ is a lattice to choose an increasing sequence of nonnegative functions $(f_l)$  in $D(\cE)$ such that $\Ek(f) = \sup_l \Ekh(f_l)$.   Since $f_l^{(n)}\overset{n\to \infty}{\longrightarrow} f_l$  with respect to $\cE$, see e.g. \cite[Theorem~1.4.2]{FOT}, and since $\Ekh$ is continuous with respect to $\cE$-convergence, we can further assume the $(f_l)$ to be bounded. For $\varepsilon > 0$ we let $\varphi_{l,\varepsilon} := f_l/(\varepsilon + f_l)$. It follows from Lemma~\ref{lemma:contraction properties} that $\varphi_{l,\varepsilon} \in D(\cE)$ and that $\varphi_{l,\varepsilon} f_l$ satisfies
$$\cE(\varphi_{l,\varepsilon}f_l) \leq \cE(f_l).$$
This inequality together with $\varphi_{l,\varepsilon}f_l \tom f_l$ as $\varepsilon \to 0 +$ yields $\varphi_{l,\varepsilon}f_l \to f_l$ with respect to $\cE$, see Lemma~\ref{lemma:existence of a weakly convergent subnet}. Since $\Ekh$ is monotone and continuous with respect to $\cE$, we obtain
$$\Ekh(f_l) = \lim_{\varepsilon \to 0+} \Ekh(\varphi_{l,\varepsilon} f_l) \leq k(f).$$
This finishes the proof.
\end{proof}

The following theorem summarizes  the  properties of $\Ek$.

\begin{theorem} \label{lemma:properties of ek}
The functional $\Ek$ is a monotone quadratic form on $L^0(m)$ that extends $\Ekh$. Its domain satisfies $D(\Ek) \subseteq D(\Em)$. If $(f_n)$ is an $\Em$-bounded sequence in $D(\Em)$ with $f_n \tom f$, then
$$\Ek(f) \leq \liminf_{n\to \infty}\Ek(f_n).$$
\end{theorem}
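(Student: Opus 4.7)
My plan is to dispatch the easy structural claims first, prove the lower semicontinuity, and then deduce quadraticity from it.

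For the structural claims, the inclusion $D(\Ek) \subseteq D(\Em)$ is built into the definition. For the extension property, Theorem~\ref{theorem:maximality main part} gives $D(\cE) \subseteq D(\Em)$, so for $f \in D(\cE)$ the admissible set $\{g \in D(\cE) : |g| \leq |f|\}$ contains $f$ itself, yielding $\Ek(f) \geq \Ekh(f)$; the reverse inequality is Lemma~\ref{lemma:monotonicity of ekh}. Monotonicity of $\Ek$ is immediate, since $|f| \leq |h|$ enlarges the admissible set in the supremum defining $\Ek(h)$.

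For the lower semicontinuity, I use Lemma~\ref{lemma:alternative formula ek} to reduce to showing, for each $\varphi \in D(\cE)$ with $0 \leq \varphi \leq 1$ and each $N \in \N$, the inequality $\Ekh(\varphi f^{(N)}) \leq \liminf_n \Ek(f_n)$. Passing to a subsequence with $\Ek(f_n) \to L := \liminf \Ek(f_n) < \infty$ (the case $L = \infty$ is trivial), Lemma~\ref{lemma:algebraic properties} together with the Markov property $\Em(f_n^{(N)}) \leq \Em(f_n)$ gives uniform boundedness of $\Em(\varphi f_n^{(N)})$, and combining with $\Ekh(\varphi f_n^{(N)}) \leq \Ek(f_n)$ yields uniform $\cE$-boundedness of $\varphi f_n^{(N)}$. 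Dominated convergence (with dominator $N|\varphi| \in L^2(m)$) gives $\varphi f_n^{(N)} \to \varphi f^{(N)}$ in $L^2(m)$, so Lemma~\ref{lemma:existence of a weakly convergent subnet} furnishes weak convergence in the Hilbert space $(D(\cE), \|\cdot\|_\cE)$. Mazur's lemma then supplies convex combinations $h_k = \sum_{n \geq k} c_n^k\, \varphi f_n^{(N)}$ that converge $\cE$-strongly to $\varphi f^{(N)}$. Since $\Em \leq \cE$ on $D(\cE)$, the polarization of $\Em$ is bounded on $(D(\cE),\|\cdot\|_\cE)$ by Cauchy--Schwarz, so $\Ekh = \cE - \Em$ is $\cE$-continuous on $D(\cE)$ and $\Ekh(h_k) \to \Ekh(\varphi f^{(N)})$. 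Jensen's inequality for the nonnegative quadratic form $\Ekh$ yields $\Ekh(h_k) \leq \sum_{n \geq k} c_n^k \Ekh(\varphi f_n^{(N)}) \leq \sup_{n \geq k} \Ek(f_n)$, which tends to $L$.

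For quadraticity, Lemma~\ref{lemma:alternative formula ek} presents $\Ek$ as a monotone increasing supremum of $\Ekh(\varphi f^{(n)})$ along the directed set of pairs $(\varphi, n)$. For $f, g \in D(\Em) \cap L^\infty(m)$ and $n$ exceeding $\|f\|_\infty + \|g\|_\infty$, the truncations have no effect on $f, g, f \pm g$, and for fixed $\varphi$ the map $h \mapsto \Ekh(\varphi h)$ is a quadratic form; since the family is directed and monotone in $\varphi$ by Lemma~\ref{lemma:monotonicity of ekh}, the parallelogram identity passes through the sup, establishing quadraticity of $\Ek$ on $D(\Em) \cap L^\infty(m)$. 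For unbounded $f, g \in D(\Ek)$, I apply the bounded parallelogram to $f^{(N)}, g^{(N)}$, use $\Ek(h^{(N)}) \nearrow \Ek(h)$ from the sup formula, and invoke the already-established lower semicontinuity on the $\Em$-bounded sequences $f^{(N)} \pm g^{(N)} \to f \pm g$ (boundedness via the triangle inequality for $\Em^{1/2}$) to obtain $\Ek(f+g) + \Ek(f-g) \leq 2\Ek(f) + 2\Ek(g)$; the reverse inequality follows by applying this to $(f+g)/2$ and $(f-g)/2$.

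The main obstacle is the lower semicontinuity, where $\cE$-weak convergence of $\varphi f_n^{(N)}$ must be upgraded via Mazur's lemma to $\cE$-strong convergence in order to exploit the $\cE$-continuity of $\Ekh$ on $D(\cE)$. A secondary issue is that quadraticity for unbounded $f, g$ genuinely depends on lower semicontinuity, so the proofs must be performed in this order.
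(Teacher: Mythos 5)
Your proof is correct, and its skeleton --- reducing everything to the quantities $\Ekh(\varphi f^{(n)})$ via Lemma~\ref{lemma:alternative formula ek} and extracting $\cE$- and $\Em$-bounded, $L^2$-convergent sequences --- is the same as the paper's. The two sub-steps where you diverge are worth comparing. For the lower semicontinuity, the paper observes that $\cE$-weak and $\Em$-weak convergence of $\varphi f_n^{(l)}$ together give $\Ekh$-weak convergence of the associated bilinear forms, and then uses that a nonnegative quadratic form is lower semicontinuous along sequences that converge weakly for its own bilinear form (Cauchy--Schwarz); your Mazur-plus-Jensen upgrade to $\cE$-strong convergence reaches the same conclusion but routes through the $\cE$-continuity of $\Ekh$ and is somewhat longer. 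For quadraticity, the paper proves only the one-sided inequality $\Ek(f+g)+\Ek(f-g)\geq 2\Ek(f)+2\Ek(g)$ --- directly for arbitrary $f,g\in D(\Em)$, using $|\varphi(f^{(n)}\pm g^{(n)})|\leq|f\pm g|$ and monotonicity --- and then invokes Lemma~\ref{lemma:characterization quadratic forms}, which says that this inequality together with $\Ek(\lambda f)\leq|\lambda|^2\Ek(f)$ already forces the parallelogram identity. That lemma is precisely the abstract version of your ``apply the inequality to $(f+g)/2$ and $(f-g)/2$'' trick, but using it up front removes the need to first establish the exact identity for bounded functions and to invoke lower semicontinuity for the unbounded case, so the order-of-proof constraint you flag at the end disappears. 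One small point you leave implicit: before applying Lemma~\ref{lemma:alternative formula ek} to the limit $f$ you need $f\in D(\Em)$, which follows from the $\Em$-boundedness of $(f_n)$ and the closedness of $\Em$.
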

\begin{proof}
 The monotonicity of $\Ek$ and that it extends $\Ekh$ is immediate from its definition and the monotonicity of $\Ekh$. Next we show that $\Ek$ is a quadratic form.  For   $\lambda \in \R$ and $f \in D(\Em)$ the identity 
 $$\Ek(\lambda f) = |\lambda|^2\Ek(f)$$ follows easily from the definition of $\Ek$ and the corresponding statement for $\Ekh$. We employ Lemma~\ref{lemma:alternative formula ek} to  verify the parallelogram identity. Let $f,g \in D(\Em)$,  $\varphi \in D(\cE)$ with $0 \leq \varphi \leq 1$ and   $n \in \N$. The inequalities $|f + g| \geq |\varphi(f^{(n)} + g^{(n)})|$ and $|f - g| \geq |\varphi(f^{(n)} - g^{(n)})|$, the monotonicity of $\Ek$ and the fact that $\Ekh$ is a quadratic form yield
 \begin{align*}
  \Ek(f + g) + \Ek(f-g) &\geq \Ekh(\varphi(f^{(n)} + g^{(n)})) + \Ekh(\varphi(f^{(n)} - g^{(n)}))\\
  &= 2 \Ekh(\varphi g^{(n)}) + 2 \Ekh(\varphi f^{(n)}).
 \end{align*}
 Lemma~\ref{lemma:alternative formula ek} implies
 $$\Ek(f + g) + \Ek(f-g) \geq 2\Ek(f) + 2\Ek(g).$$
 According to Lemma~\ref{lemma:characterization quadratic forms} this is inequality and the homogeneity are sufficient for proving that $\Ek$ is a quadratic form. 

 It remains to show the  statement on lower semicontinuity. To this end, let $(f_n)$ an $\Em$-bounded sequence in $D(\Em)$ with $f_n \tom f$. Since $\Em$ is closed, we have $f \in D(\Em)$. Therefore, Lemma~\ref{lemma:alternative formula ek} implies that it suffices to show 
 $$\Ekh(\varphi f^{(l)} ) \leq \liminf_{n\to \infty}\Ekh(\varphi f_n^{(l)})$$
 for every $l \in \N$ and $\varphi \in D(\cE)$ with $0 \leq \varphi \leq 1$. The monotonicity of $\Ekh$ and Lemma~\ref{lemma:algebraic properties} yield
 $$\cE(\varphi f_n^{(l)}) = \Em(\varphi f_n^{(l)}) + \Ekh(\varphi f_n^{(l)})  \leq 2 l^2 \Em(\varphi) + 2 \Em(f_n) +  l^2 \Ekh(\varphi).$$
 Since $(f_n)$ is $\Em$-bounded, we obtain that $(\varphi f_n^{(l)})$ is  $\cE$-bounded and $\Em$-bounded. Furthermore, we have the $L^2$-convergence $\varphi f_n^{(l)}   \to \varphi f^{(l)}$, as $n\to \infty$, and so Lemma~\ref{lemma:existence of a weakly convergent subnet} shows  $\varphi f_n^{(l)}   \to \varphi f^{(l)}$ $\cE$-weakly and $\Em$-weakly, i.e., $\varphi f_n^{(l)}   \to \varphi f^{(l)}$ $\Ekh$-weakly. Quadratic forms are lower semicontinuous with respect to weak convergence of the induced bilinear form. Therefore, we arrive at the desired inequality. 
 \end{proof}

 \begin{example}[Weighted manifolds, continued]
 We prove that the domain of $\D^{(k)}$ is given by $D(\D^{(k)}) = \{f \in L^2(\mu) \mid \nabla f \in \vec L^2(\mu) \text{ and } \int_M V f^2 d \mu <  \infty \}$ on which it acts by
 $$\D^{(k)}(f) = \int_M V f^2 d\mu.$$
 \begin{proof}
  If follows from what we have already proven on $\D^{(M)}$ that 
  $$\hat \D^{(k)}(f) = \D(f) - \D^{(M)}(f) = \int_M V f^2 d\mu$$
  for $f \in D(\D)$. From this identity it easily follows from the monotone convergence theorem that for $f \in D(\D^{(M)}) = \{g \in L^2_{\rm loc}(\mu) \mid \nabla \vec L^2(\mu)\}$   the supremum
  $$ \sup \{ \D^{(k)}(h) \mid  h \in D(\D) \text{ with } |h| \leq |f|\} $$
  is finite  if and only if $\int_M V f^2 d\mu <  \infty$ in which case it equals $\int_M V f^2 d\mu$.  This finishes the proof.
 \end{proof}
 \end{example}

\begin{example}[Weighted graphs, continued]
 With the same arguments as for manifolds it follows that the domain of the killing part of $Q$ satisfies
 $$D(Q^{(k)}) = \{f :X \to \R\mid \sum_{x,y} b(x,y) (f(x)- f(y))^2 + \sum_{x \in X} c(x) f(x)^2 < \infty\}$$
 on which it acts by
 $$Q^{(k)}(f) = \sum_{x\in X} c(x) f(x)^2.$$
\end{example}

\subsection{The (active) reflected Dirichlet form and its maximality} In this subsection we use the main part and the killing part to introduce the reflected Dirichlet form. We show that it is a Silverstein extension and discuss its maximality among all Silverstein extensions. As a main result we obtain that the active main part is the maximal form among all Dirichlet forms that dominate the given form. In contrast, we give an example of a Dirichlet form which does not possess a maximal Silverstein extension.

\begin{definition}[Reflected Dirichlet form]
 Let $\cE$ be a Dirichlet form. The {\em reflected form of $\cE$} is  $\Er := \Em + \Ek$. The restriction of $\Er$ to $L^2(m)$ is called the {\em active reflected Dirichlet form of $\cE$} and is denoted by $\Era$. 
\end{definition}

 The results of the previous subsection accumulate in the following theorem.

\begin{theorem}\label{theorem:er is a silverstein extension}
 $\Er$ is an energy form that extends $\Ee$ and the Dirichlet form $\Era$ is a Silverstein extension of $\cE$.
\end{theorem}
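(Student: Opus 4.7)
The plan is to verify that $\Er$ is a quadratic form, is Markovian, and is lower semicontinuous with respect to local convergence in measure, so that it qualifies as an energy form; then to extract the extension of $\Ee$ from its closedness; and finally to read off the Silverstein property of $\Era$ from the algebraic ideal structure already built into the truncated forms via Theorem~\ref{theorem:properties of concatenated forms}. Since $D(\Ek)\subseteq D(\Em)$ by Theorem~\ref{lemma:properties of ek}, the domain of $\Er = \Em + \Ek$ equals $D(\Ek)$ and $\Er$ is a nonnegative quadratic form. Markovianity follows from two ingredients: for any normal contraction $C$ one has $\Em(C\circ f)\le \Em(f)$ because $\Em$ is an energy form, and $|C\circ f|\le |f|$ together with monotonicity of $\Ek$ (Theorem~\ref{lemma:properties of ek}) gives $\Ek(C\circ f)\le \Ek(f)$; adding the two yields the Markov property for $\Er$.

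Lower semicontinuity is the technical heart of the proof. Given $f_n\tom f$ with $c:=\liminf \Er(f_n)<\infty$, I would pass to a subsequence realising the liminf and, by a further diagonal extraction, assume $\Em(f_n)\to a$ and $\Ek(f_n)\to b$ with $a+b=c$. Since $\Em(f_n)\le \Er(f_n)$ the sequence is $\Em$-bounded, so $L^0$-lower semicontinuity of $\Em$ yields $\Em(f)\le a$, and the conditional lower semicontinuity of $\Ek$ along $\Em$-bounded sequences from Theorem~\ref{lemma:properties of ek} yields $\Ek(f)\le b$; adding gives $\Er(f)\le c$. This is the main obstacle: lower semicontinuity of $\Ek$ along arbitrary $L^0$-convergent sequences is not available, and the argument must unlock it via the $\Em$-control that comes automatically from $\Er$.

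For the extension claim, any $f\in D(\cE)$ lies in $D(\Em)$ by Theorem~\ref{theorem:maximality main part}, and $\Ek$ extends $\Ekh$, so $\Ek(f)=\cE(f)-\Em(f)$ and therefore $\Er(f)=\cE(f)=\Ee(f)$. For a general $f\in D(\Ee)$, I would pick an $\cE$-Cauchy sequence $(f_n)$ in $D(\cE)$ with $f_n\tom f$ and $\cE(f_n)\to\Ee(f)$. Since $\Er=\cE$ on $D(\cE)$, this sequence is $\Er$-Cauchy; the general theory of closed forms on the metrizable topological vector space $L^0(m)$ from Appendix~\ref{appendix:closed forms on l0} then delivers $f\in D(\Er)$ with $\Er(f-f_n)\to 0$, so $\Er(f)=\lim \cE(f_n)=\Ee(f)$.

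For the Silverstein property of $\Era$, continuity of the embedding $L^2(m)\hookrightarrow L^0(m)$ transfers $L^0$-lower semicontinuity of $\Er$ to $L^2$-lower semicontinuity of $\Era$, and $D(\cE)\subseteq D(\Era)$ is dense in $L^2(m)$, so $\Era$ is a densely defined, closed, Markovian quadratic form on $L^2(m)$, i.e.\ a Dirichlet form extending $\cE$. To verify the ideal condition from Lemma~\ref{lemma:characterization of domination}(iii), I would take $f\in D(\Era)\cap L^\infty(m)$ and $g\in D(\cE)\cap L^\infty(m)$; splitting $g$ into positive and negative parts via the lattice property of $D(\cE)$ reduces to $g\ge 0$, and setting $\varphi=g/(\|g\|_\infty\vee 1)\in D(\cE)$ with $0\le\varphi\le 1$, the chain $D(\Era)\cap L^\infty\subseteq D(\Em)\cap L^\infty\subseteq D(\Eph)$ together with Theorem~\ref{theorem:properties of concatenated forms}(b) gives $\varphi f\in D(\cE)$, whence $fg=(\|g\|_\infty\vee 1)\,\varphi f\in D(\cE)$.
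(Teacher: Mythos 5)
Your proposal is correct and follows essentially the same route as the paper: closedness of $\Er$ from the $L^0$-lower semicontinuity of $\Em$ combined with the conditional lower semicontinuity of $\Ek$ along $\Em$-bounded sequences, the extension of $\Ee$ via completeness of $D(\Er)$ in the form topology together with $\Er=\cE$ on $D(\cE)$, and the Silverstein property via the chain $D(\Era)\cap L^\infty(m)\subseteq D(\Em)\cap L^\infty(m)\subseteq D(\Ep)\cap L^\infty(m)=\{g\in L^\infty(m)\mid \varphi g\in D(\cE)\}$ from Theorem~\ref{theorem:properties of concatenated forms}(b). Your write-up is in fact somewhat more explicit than the paper's on the closedness step (the subsequence extraction splitting $\liminf\Er(f_n)$ into limits of $\Em(f_n)$ and $\Ek(f_n)$), which the paper leaves implicit.
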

\begin{proof}
 The closedness of $\Er$ on $L^0(m)$ follows from the closedness of $\Em$ on $L^0(m)$ and Lemma~\ref{lemma:properties of ek}. That it is Markovian is a consequence of the Markov property of $\Em$ and the monotonicity of $\Ek$. Furthermore, by the very definition of $\Ekh$ the forms $\Er$ and $\cE$ agree on $D(\cE)$.
 
 Next we prove that $\Er$ extends $\Ee$. To this end, let $f \in D(\Ee)$ and let $(f_n)$ an $\cE$-Cauchy sequence in $D(\cE)$ with $f_n \tom f$. The form $\Er$ is closed and so its domain $D(\Er)$ is complete with respect to the form topology, cf. Lemma~\ref{lemma:completeness v.s. lower semicontinuity} and the discussion preceding it. Since $\Ee$ and $\Er$ agree on $D(\cE)$, the sequence $(f_n)$ is a Cauchy sequence in the form topology of $\Er$. By completeness this implies $f \in D(\Er)$ and 
 $$\Er(f) = \lim_{n \to \infty} \Er(f_n) = \lim_{n \to \infty} \cE(f_n) = \Ee(f).$$

 For proving that $\Era$ is a Silverstein extension of $\cE$ we show that $D(\cE) \cap L^\infty(m)$ is an algebraic ideal in $D(\Era) \cap L^\infty(m)$. Let $\varphi \in D(\cE) \cap L^\infty(m)$ and $f \in D(\Era) \cap L^\infty(m)$. Since $D(\cE)$ is a lattice it suffices to consider $0 \leq \varphi  \leq 1$. By definition we have
 $$D(\Era) \cap L^\infty(m)  \subseteq D(\Em) \cap L^\infty(m) \subseteq D(\Ep)\cap L^\infty(m) = \{g \in L^\infty(m) \mid \varphi g \in D(\cE)\},$$
where we used Theorem~\ref{theorem:properties of concatenated forms} for the last equality. This shows $f\varphi \in D(\cE)$ and finishes the proof.
 \end{proof}

New approaches towards defining reflected forms naturally raise the question  whether  these approaches coincide or not. The following maximality statement about the main part of partially settles this issue. It is the main structural result about the active main part.
 
\begin{theorem}\label{theorem:maximality active reflected form}
Let $\cE$ be a Dirichlet form. Its active main part $\Em_a$ is the maximal Dirichlet form that dominates $\cE$. In particular, if $\Ek= 0$, then $\Era = \Em_a$ is the maximal Silverstein extension of $\cE$.
\end{theorem}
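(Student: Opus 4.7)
The plan is to reduce the first statement to two ingredients already available in the paper: the upper bound provided by Theorem~\ref{theorem:maximality main part}, which asserts that every Dirichlet form dominating $\cE$ lies below $\Em_a$, and the characterization of domination through the order ideal condition in Lemma~\ref{lemma:characterization of domination}. Since the upper bound is in place, the only remaining content is to verify that $\Em_a$ itself dominates $\cE$. Invoking Lemma~\ref{lemma:characterization of domination}, this amounts to checking (i) the inclusion $D(\cE) \subseteq D(\Em_a)$ and the inequality $\cE(f,g) \geq \Em_a(f,g)$ for nonnegative $f,g\in D(\cE)$, and (ii) that $D(\cE)$ is an order ideal in $D(\Em_a)$. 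Part (i) is immediate from $\cE \leq \Em_a$ (Theorem~\ref{theorem:maximality main part}) together with the identity $D(\Ee) \cap L^2(m) = D(\cE)$. The main obstacle is therefore the ideal property in (ii).

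To establish (ii) I would route the argument through the active reflected form $\Era$, which by Theorem~\ref{theorem:er is a silverstein extension} is a Silverstein extension of $\cE$; this already encodes the desired order ideal property for the pair $(D(\cE), D(\Era))$. Given $f \in D(\Em_a)$ and $g \in D(\cE)$ with $|f| \leq |g|$, it therefore suffices to prove the intermediate inclusion $f \in D(\Era) = D(\Em_a) \cap D(\Ek)$. Membership in $D(\Em_a)$ is given by hypothesis, so the remaining point is $f \in D(\Ek)$. Here I would unfold the definition
\[
\Ek(f) = \sup\{\Ekh(h) \mid h \in D(\cE),\ |h| \leq |f|\}.
\]
Any admissible $h$ satisfies $|h| \leq |g|$, so by the monotonicity of $\Ekh$ from Lemma~\ref{lemma:monotonicity of ekh} we obtain $\Ekh(h) \leq \Ekh(g)$. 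Taking the supremum yields $\Ek(f) \leq \Ekh(g) < \infty$, hence $f \in D(\Ek)$, and therefore $f \in D(\Era)$. The Silverstein property of $\Era$ then delivers $f \in D(\cE)$ as required.

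For the \emph{in particular} clause, assume $\Ek \equiv 0$. Then $\Er = \Em$ and accordingly $\Era = \Em_a$, and by Theorem~\ref{theorem:er is a silverstein extension} this form is itself a Silverstein extension of $\cE$. Any Silverstein extension $\tilde\cE$ of $\cE$ is, by the definition given after Lemma~\ref{lemma:characterization of domination}, a Dirichlet form dominating $\cE$; the first part of the theorem then forces $\tilde\cE \leq \Em_a = \Era$, which is precisely the maximality of $\Era$ in the class of Silverstein extensions. Overall, the proof is a compact assembly of previously established results, with the monotonicity of $\Ekh$ and the Silverstein property of $\Era$ being the two structural inputs that make the ideal condition reduce to facts already at hand.
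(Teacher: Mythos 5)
There is one genuine gap, in your ``Part (i) is immediate'' claim. The relation $\cE \leq \Em_a$ from Theorem~\ref{theorem:maximality main part} only compares \emph{on-diagonal} values: it says $\cE(f) \geq \Em_a(f)$ for $f \in D(\cE)$. Condition (ii) of Lemma~\ref{lemma:characterization of domination} additionally requires the \emph{bilinear} inequality $\cE(f,g) \geq \Em_a(f,g)$ for all nonnegative $f,g \in D(\cE)$, and this does not follow from nonnegativity of the difference form $\Ekh = \cE - \Em$ on the diagonal: a nonnegative quadratic form can have $q(f,g) < 0$ for distinct nonnegative $f,g$ (a jump-type energy evaluated at two disjointly supported indicator functions is the standard example). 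The missing step is exactly what the paper supplies at this point: $\Ekh$ is monotone by Lemma~\ref{lemma:monotonicity of ekh}, its domain $D(\cE)$ is a lattice, and Lemma~\ref{lemma:characterization monotonicity} converts monotonicity into nonnegative definiteness, giving $\cE(f,g) - \Em_a(f,g) = \Ekh(f,g) \geq 0$ for nonnegative $f,g \in D(\cE)$. With that inserted, part (i) is done.

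The rest of your argument is correct. Your verification of the order ideal property is a mild but valid variant of the paper's: the paper checks the algebraic ideal condition (iii) of Lemma~\ref{lemma:characterization of domination} directly from the identity $D(\Ep)\cap L^\infty(m) = \{g \in L^\infty(m) \mid \varphi g \in D(\cE)\}$ in Theorem~\ref{theorem:properties of concatenated forms}~(b), whereas you check condition (ii) by first showing $\Ek(f) \leq \Ekh(g) < \infty$ via the monotonicity of $\Ekh$, concluding $f \in D(\Era)$, and then invoking the Silverstein property of $\Era$ from Theorem~\ref{theorem:er is a silverstein extension}. There is no circularity, since that theorem is proved independently of the present one; the trade-off is that your route needs the killing part and the reflected form even to prove a statement purely about the main part, while the paper's route stays inside the truncation machinery. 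The ``in particular'' clause is handled the same way in both.
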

\begin{proof}
We proved in Theorem~\ref{theorem:maximality main part} that $\Em_a$ is larger than any Dirichlet form that dominates $\cE$. Hence, it only remains to prove that  $\Em_a$ dominates $\cE$. To this end, we employ Lemma~\ref{lemma:characterization of domination}. It follows along the same lines as at the end of the proof of Theorem~\ref{theorem:er is a silverstein extension} that $D(\cE) \cap L^\infty(m)$ is an algebraic ideal in $D(\Em_a) \cap L^\infty(m)$. For nonnegative $f,g \in D(\cE)$ we infer
$$\cE(f,g) - \Em_a(f,g) = \Ekh(f,g) \geq 0$$
from Lemma~\ref{lemma:monotonicity of ekh} and Lemma~\ref{lemma:characterization monotonicity}, which we can apply since $D(\Em)$ is a lattice, see Lemma~\ref{lemma:algebraic properties}.  The 'In particular' part follows from the identity $\Em_a = \Era$ if $\Ek=0$. This finishes the proof.
\end{proof}

\begin{remark}
 It is proven in \cite[Theorem~6.6.9]{CF} for quasi-regular Dirichlet forms that the active reflected Dirichlet form constructed there is always the maximal Silverstein extension of the given form. While the proof of \cite[Theorem~6.6.9]{CF} contains a mistake (see also the counterexample below), it is true when the given form has no killing (in the sense of the Beurling-Deny decomposition of quasi-regular forms). Since the maximal Silverstein extension is unique, in this case we obtain from Theorem~\ref{theorem:maximality active reflected form} that the active reflected Dirichlet form in the sense of \cite{CF} coincides with the active reflected Dirichlet form (and the active main part) in our sense. With the theory developed in this paper it is also not so difficult to prove that both approaches to reflected Dirichlet spaces agree for all quasi-regular Dirichlet forms. Our main part corresponds to the sum of the extended strongly local part and the extended jump part (in the sense of the Beurling-Deny decomposition) and our killing part coincides with the killing part of the Beurling-Deny decomposition.  However, spelling out all the definitions and details is a bit lengthy and so we leave the details as an exercise to the reader. For weighted manifolds and weighted graphs we have given the necessary arguments in the examples. 
 
 In \cite{Rob} there is another construction for maximal Dirichlet forms that dominate a given inner regular local Dirichlet form. It follows from  \cite[Corollary~4.3]{Rob} that our form $\Era$ coincides with the form $\cE_m$ constructed in \cite{Rob} if $\cE$ has no killing. 

 \end{remark}

\begin{remark}
Given a Dirichlet form $\cE$ we have proven that $\Em_a$ is the maximal element (in the sense of quadratic forms) in the cone of Dirichlet forms that dominate $\cE$. It would be interesting to know whether it is also the maximal element among all quadratic forms that dominate $\cE$. Note that any form that dominates $\cE$ has a positivity preserving resolvent and therefore satisfies the first Beurling-Deny criterion, while it need not satisfy the second Beurling-Deny criterion.  
\end{remark}

In general $\Er$ is not the maximal Silverstein extension of $\cE$. In fact, the following example shows that there are regular Dirichlet forms that do not possess a maximal Silverstein extension. In particular, it is a counterexample to the (wrong) claims of \cite[Theorem~5.1]{Kuw} and \cite[Theorem~6.6.9]{CF}, which state for quasi-regular Dirichlet forms that the active reflected Dirichlet form is always the maximal Silverstein extension. 

We let $\lambda$ be the Lebesgue measure on all Borel subsets of the open interval $(-1,1)$ and let  
$$W^1((-1,1)) = \{f \in L^2(\lambda) \mid f' \in L^2(\lambda)\}$$
the usual Sobolev space of first order. It is folklore that $W^1((-1,1))$ equipped with the norm
$$ \|f\|_{W^1} := \sqrt{\|f\|_2^2 + \|f'\|_2^2} $$
is a Hilbert space, which continuously embeds into $(C([-1,1]),\|\cdot\|_\infty)$. In particular,  any function in $W^1((-1,1))$ can be uniquely extended to the boundary points $-1$ and $1$. Then
$$W_0^1((-1,1)) = \{ f\in W^1((-1,1)) \mid f(-1) = f(1) = 0\} $$
coincides with the closure of $C_c((-1,1)) \cap W^1((-1,1))$ in the space $(W^1((-1,1)),\|\cdot\|_{W^1})$.

\begin{proposition}\label{proposition:counterexample}
The Dirichlet form 
$$\cE:L^2(\lambda) \to [0,\infty], \cE(f) = \begin{cases} 
                                                        \int_{-1}^1|f'|^2 d\lambda + f(0)^2 & \text{ if } f \in W_0^1((-1,1))\\
                                                        \infty &\text{ else}
                                                       \end{cases}
$$
is regular and does not possess a maximal Silverstein extension. 
\end{proposition}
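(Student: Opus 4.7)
My plan is to handle regularity and the non-existence of a maximal Silverstein extension separately. Regularity follows quickly from the Sobolev embedding $W^1 \hookrightarrow C([-1,1])$: the space $C_c^\infty((-1,1))$ lies in $D(\cE)$, is dense in $C_0((-1,1))$ in the supremum norm, and approximates every $f \in D(\cE) = W_0^1((-1,1))$ in $W^1$-norm; since the trace at $0$ is continuous on $W^1$, such an approximating sequence also converges in form norm.

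The core of the argument is producing a second Silverstein extension of $\cE$ that, in contrast to $\Era$, kills the constant function $1$. The candidate I will propose is
$$\tilde{\cE}(f) := \int_{-1}^1 |f'|^2\, d\lambda + (f(0) - f(-1))^2, \qquad D(\tilde{\cE}) := W^1((-1,1)).$$
The routine verification breaks into three pieces: $\tilde{\cE}$ is a Dirichlet form (closedness from lower semicontinuity of $\int|f'|^2$ and trace continuity of $W^1$ at $-1$ and $0$; the Markov property because every normal contraction is $1$-Lipschitz with $C(0)=0$); $\tilde{\cE}$ extends $\cE$ (on $W_0^1((-1,1))$ we have $f(-1)=0$, so the boundary term reduces to $f(0)^2$); and $\tilde{\cE}$ is a Silverstein extension (if $f \in W^1 \cap L^\infty$ and $g \in W_0^1 \cap L^\infty$, then $fg \in W^1$ still vanishes at $\pm 1$, so $fg \in D(\cE)$, and by Lemma~\ref{lemma:characterization of domination} the algebraic ideal property suffices).

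With $\tilde{\cE}$ in place the contradiction is short. Assume a maximal Silverstein extension $\hat{\cE}$ exists; then $\hat{\cE} \geq \Era$ and $\hat{\cE} \geq \tilde{\cE}$ in the form order, so $W^1((-1,1)) \subseteq D(\hat{\cE})$ and $\hat{\cE}(f) \leq \min(\Era(f), \tilde{\cE}(f))$ for every $f \in W^1((-1,1))$. Taking $f \equiv 1$ gives $\hat{\cE}(1) \leq \tilde{\cE}(1) = 0$, so $1 \in \ker \hat{\cE}$, and Cauchy--Schwarz for the polarisation yields $\hat{\cE}(1, g) = 0$ for every $g \in D(\hat{\cE})$. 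I will then test against the tent $f(x) := 1 - |x| \in D(\cE)$, for which $\cE(f) = 2 + 1 = 3$ and hence $\hat{\cE}(f) = 3$; bilinear expansion gives
$$\hat{\cE}(f - 1) = \hat{\cE}(f) - 2\hat{\cE}(f, 1) + \hat{\cE}(1) = 3,$$
whereas $(f - 1)(0) = 0$ yields $\Era(f - 1) = \int_{-1}^1 |(f-1)'|^2\,d\lambda = 2$, contradicting $\hat{\cE}(f - 1) \leq \Era(f - 1)$.

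The main obstacle is identifying a second Silverstein extension whose kernel strictly contains that of $\Era$. Once $\tilde{\cE}$ is produced, the contradiction is driven by the asymmetry between the two kernels: $1 \in \ker \tilde{\cE}$ wipes out the killing of $\cE$ at $0$ inside $\hat{\cE}$ via the shift $f \mapsto f-1$, while $\Era$ still records the killing at $0$ and forces a strictly smaller upper bound on $\hat{\cE}(f-1)$.
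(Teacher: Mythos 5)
Your argument is essentially the paper's own proof: the paper likewise produces a Silverstein extension on $W^1((-1,1))$ with boundary term $(f(0)-f(1))^2$ (your $\tilde\cE$ with $f(-1)$ plays the identical role) to force $\hat\cE(1)=0$, and then derives the contradiction from the shift $f\mapsto f-1$ applied to a function of $W^1_0((-1,1))$ with value $1$ at the origin. The one place where your write-up leans on an unproved claim is the evaluation $\Era(f-1)=\int_{-1}^{1}|(f-1)'|^2\,d\lambda$: this presupposes the explicit formula $\Era(g)=\int|g'|^2\,d\lambda+g(0)^2$ on $W^1((-1,1))$, which requires computing the main and killing parts of $\cE$ for this example and is not established anywhere in your argument. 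The paper avoids this by using, in place of $\Era$, the explicitly defined second Silverstein extension $\cE_2(g)=\int_{-1}^{1}|g'|^2\,d\lambda+g(0)^2$ with domain $W^1((-1,1))$ (a Silverstein extension by the same order/algebraic ideal argument you already give for $\tilde\cE$); substituting $\cE_2$ for $\Era$ in your final inequality closes the gap with no other changes.
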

\begin{proof}
 The regularity of $\cE$ follows from the properties of the Sobolev space $W_0^1((-1,1))$. Assume that $\ow{\cE}$ were the maximal Silverstein extension of $\cE$. The Dirichlet forms 
  $$\cE_1:L^2(\lambda) \to [0,\infty],\cE_1(f) = \begin{cases} 
                                                        \int_{-1}^1|f'|^2 d \lambda + (f(0)-f(1))^2 & \text{ if } f \in W^1((-1,1))\\
                                                        \infty &\text{ else}
                                                       \end{cases}
$$
 and
 $$\cE_2:L^2(\lambda) \to [0,\infty],\cE_2(f) = \begin{cases} 
                                                        \int_{-1}^1|f'|^2 d \lambda + f(0)^2 & \text{ if } f \in W^1((-1,1))\\
                                                        \infty &\text{ else}
                                                       \end{cases}
$$
are  Silverstein extensions of $\cE$, as obviously $D(\cE)$ is an order ideal in $D(\cE_1)$ and $D(\cE_2)$. The maximality of $\ow{\cE}$ implies $W^1((-1,1)) = D(\cE_1) \subseteq (\ow{\cE})$ and 
$$\ow{\cE}(1) \leq \cE_1(1) = 0.$$
We choose a function $f \in W_0^1((-1,1))$ with $f(0) = 1$. The equality $\ow{\cE}(1) = 0$ and the maximality of $\ow{\cE}$ yield 
\begin{align*}
   \int_{-1}^1|f'|^2 d \lambda + 1 = \cE(f) = \ow{\cE}(f) = \ow{\cE}(f - 1) \leq \cE_2(f-1) = \int_{-1}^1|f'|^2 d\lambda,
\end{align*}
a contradiction. This shows that $\cE$ does not possess a maximal Silverstein extension.  
\end{proof}

\section{Reflected regular Dirichlet forms}\label{section:regular}

In this we employ our new construction of the reflected form to prove that for regular forms the space of continuous functions is dense in the domain of the active reflected Dirichlet form. We then briefly sketch how this observation leads to a construction of the associated reflected process on a compactification of the underlying space.

Let $(X,d)$ a separable locally compact metric space and let $m$ a Radon measure of full support on $X$. By $C_c(X)$ we denote the space of compactly supported continuous functions on $X$. Recall that a Dirichlet form $\cE$ on $L^2(m)$ is called {\em regular} if $C_c(X) \cap D(\cE)$  is dense 
\begin{itemize}
 \item in $D(\cE)$ with respect to the form norm,
 \item in $C_c(X)$ with respect to the uniform norm.
\end{itemize}
In what follows $\cE$ is a fixed regular Dirichlet form on the metric measure space $(X,d,m)$.

\begin{lemma}[Regular partitions of unity]\label{lemma:regular partitions of unity}
For every  sequence of  relatively compact  open sets $(G_n)$ with $\overline{G_n} \subseteq G_{n+1}$, for all $n \in \N$, and $\bigcup_n G_n = X$ there exist functions $\psi_n \in D(\cE)\cap C_c(X)$ with $0\leq \psi_n \leq 1$, $\supp \psi_n \subseteq G_{n + 1} \setminus \overline{G_{n-1}}$ and 
$$\sum_{n = 1}^\infty \psi_n = 1.$$
\end{lemma}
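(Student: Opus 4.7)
The plan is to build an increasing sequence of cutoff functions $\eta_n \in D(\cE) \cap C_c(X)$ which are identically one on a neighborhood of $\overline{G_n}$ and supported in $G_{n+1}$, and then to take $\psi_n$ as a telescoping difference $\eta_n - \eta_{n-1}$. The slack between ``equal to one on $\overline{G_n}$'' and ``equal to one on an open neighborhood of $\overline{G_n}$'' is what will later buy us the sharp support condition $\supp \psi_n \subseteq G_{n+1} \setminus \overline{G_{n-1}}$.

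First, using that $(X,d)$ is locally compact (hence normal), I would  interpolate: for each $n \in \N$ choose an open set $U_n$ with
$$\overline{G_n} \subseteq U_n \subseteq \overline{U_n} \subseteq G_{n+1}$$
and $\overline{U_n}$ compact. Set $U_0 := \emptyset$ and $\eta_0 := 0$. The main technical step is then to invoke the standard cutoff lemma for regular Dirichlet forms (a consequence of regularity combined with normal contractions, cf.\ \cite{FOT}): for every compact set $K$ and every relatively compact open set $U$ with $K \subseteq U$ there exists a function in $D(\cE) \cap C_c(X)$ with values in $[0,1]$ that equals $1$ on $K$ and is supported in $U$. Applying this to $K = \overline{U_n}$ and $U = G_{n+1}$, I obtain $\eta_n \in D(\cE) \cap C_c(X)$ with $0 \leq \eta_n \leq 1$, $\eta_n \equiv 1$ on $\overline{U_n}$, and $\supp \eta_n \subseteq G_{n+1}$.

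Then I would set $\psi_n := \eta_n - \eta_{n-1}$ for $n \geq 1$ and verify all claimed properties. Membership $\psi_n \in D(\cE) \cap C_c(X)$ is immediate because both spaces are vector spaces. Since $\supp \eta_{n-1} \subseteq G_n \subseteq \overline{G_n} \subseteq U_n$ and $\eta_n \equiv 1$ on $\overline{U_n}$, we have $\eta_n \geq \eta_{n-1}$ pointwise, and therefore $0 \leq \psi_n \leq \eta_n \leq 1$. For the support, observe that $\psi_n$ vanishes on the open set $U_{n-1}$ (where $\eta_n = \eta_{n-1} = 1$), so
$$\supp \psi_n \subseteq X \setminus U_{n-1} \subseteq X \setminus \overline{G_{n-1}},$$
while on the other hand $\supp \psi_n \subseteq \supp \eta_n \cup \supp \eta_{n-1} \subseteq G_{n+1}$, and combining these gives the desired containment $\supp \psi_n \subseteq G_{n+1} \setminus \overline{G_{n-1}}$.

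Finally, the telescoping identity $\sum_{k=1}^{N} \psi_k = \eta_N$ together with the fact that every $x \in X$ lies in some $G_N$ (hence in $\overline{G_N} \subseteq U_N$, where $\eta_N = 1$) yields that for each $x$ the sum stabilizes at $1$ in finitely many steps; in particular it is locally finite and $\sum_{n=1}^\infty \psi_n = 1$. The only nonroutine ingredient is the cutoff lemma used in the second step; once it is available, the rest is bookkeeping with supports.
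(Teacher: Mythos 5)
Your proof is correct, but it takes a genuinely different route from the paper. The paper fixes bump functions $\varphi_n \in D(\cE)\cap C_c(X)$ supported in the annuli $\Omega_n = G_{n+1}\setminus\overline{G_{n-1}}$ and equal to $1$ on the compact annuli, and then \emph{normalizes}: $\psi_n := \varphi_n/\varphi$ with $\varphi = \sum_k \varphi_k \geq 1$. The price of that normalization is an extra argument showing that the quotient stays in $D(\cE)$ — the paper verifies a two-variable Lipschitz estimate for $(f,g)\mapsto f/g$ and invokes the contraction property (Lemma~2.1). Your telescoping construction $\psi_n = \eta_n - \eta_{n-1}$ sidesteps this entirely, since $D(\cE)\cap C_c(X)$ is a vector space; the only nontrivial input is the same cutoff lemma (regularity plus normal contractions, FOT Exercise~1.4.1) that the paper also uses, and the interpolated sets $U_n$ give you exactly the slack needed for $\supp\psi_n \subseteq G_{n+1}\setminus\overline{G_{n-1}}$ and for the monotonicity $\eta_n \geq \eta_{n-1}$ that yields $0\leq\psi_n\leq 1$. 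All the support bookkeeping in your write-up checks out (including the degenerate case $n=1$ with $U_0=\emptyset$, $\eta_0=0$), and the stabilization of the partial sums $\sum_{k\leq N}\psi_k = \eta_N$ at $1$ is correct. If anything, your argument is the leaner of the two; the paper's normalization approach has the mild advantage of producing $\psi_n$ directly from any family of bumps subordinate to the annuli without needing the cutoffs to be nested and monotone.
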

\begin{proof}
 Set $\Omega_n := G_{n+1} \setminus \overline{G_{n-1}}$. Since $\cE$ is regular, there exist functions $\varphi_n \in D(\cE) \cap C_c(X)$ with $0 \leq \varphi_n \leq 1$, $\varphi_n = 1$ on $G_n$ and $\supp \varphi_n \subseteq \Omega_n$, cf. \cite[Exercise~1.4.1]{FOT}. We let 
 $$\varphi:= \sum_{n = 1}^\infty \varphi_n \text{ and } \psi_n := \frac{\varphi_n}{\varphi}. $$
 Since $(\Omega_n)$ is a locally finite cover of $X$, the function $\varphi$ is bounded and continuous. Moreover, it satisfies $\varphi \geq 1$. Therefore, $\psi_n \in C_c(X)$ and 
 $$\sum_{n=1}^\infty \psi_n = 1.$$
 It remains to prove $\psi_n \in D(\cE)$. The property $\supp \varphi_n \subseteq \Omega_n$ yields
 $$\psi_n = \begin{cases}
             \frac{\varphi_n}{\varphi_{n-1} + \varphi_n + \varphi_{n+1}} &\text{on } \{\varphi_n > 0\},\\
             0&\text{on } \{\varphi_n = 0\}.
            \end{cases}
$$
 Hence, $\psi_n$ is of the form $g^{-1}f$ with $f,g \in D(\cE)$, $0 \leq f \leq g$ and $g \geq 1 $ on $\{f > 0\}$, where we use the convention $g^{-1}f = 0$ on $\{f = 0\}$ . Such functions satisfy
 $$\left|\frac{f(x)}{g(x)}\right| \leq |f(x)| \text{ and } \left|\frac{f(x)}{g(x)} - \frac{f(y)}{g(y)}\right| \leq |f(x) - f(y)| + |g(x) - g(y)|.$$
 Therefore, Lemma~\ref{lemma:contraction properties} yields $g^{-1}f \in D(\cE)$ and the claim is proven.
\end{proof}

The following lemma is a variant of \cite[Corollary~2.3.1]{FOT} and we include a proof for the convenience of the reader. For a discussion of the related question of Kac regularity we  refer  to the recent \cite{Wir}.
\begin{lemma}\label{lemma:help domains}
 Let $\Omega$ relatively compact open. Then $\{\psi \in D(\cE) \cap C_c(X) \mid \supp \psi \subseteq \Omega\}$ is dense in $ \{f \in D(\cE)  \mid \text{there ex. compact } K\subseteq \Omega \text{ with } f 1_{X \setminus K} = 0\}$ with respect to the form norm. 
\end{lemma}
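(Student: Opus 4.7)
The plan is to approximate $f$ by multiplying an approximation from $D(\cE)\cap C_c(X)$ with a fixed cutoff function supported in $\Omega$. Given $f\in D(\cE)$ with $f\1_{X\setminus K}=0$ for some compact $K\subseteq\Omega$, I first reduce to the bounded case: the truncations $f^{(n)}:=(f\wedge n)\vee(-n)$ still vanish off $K$ and converge to $f$ in form norm, since normal contractions operate continuously on $D(\cE)$ (cf.\ \cite[Theorem~1.4.2]{FOT}; analogous to Lemma~\ref{lemma:bounded approximation}). So assume $\|f\|_\infty\leq N$. By the regularity of $\cE$, pick a cutoff $\eta\in D(\cE)\cap C_c(X)$ with $0\leq\eta\leq 1$, $\eta=1$ on $K$ and $\supp\eta\subseteq\Omega$ (compare \cite[Exercise~1.4.1]{FOT}). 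By regularity again, choose $(g_n)\subseteq D(\cE)\cap C_c(X)$ with $g_n\to f$ in form norm, and by the same truncation continuity replace $g_n$ by $(g_n\wedge N)\vee(-N)$ so that $\|g_n\|_\infty\leq N$.

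Set $\tilde g_n:=\eta g_n$. Because $D(\cE)\cap L^\infty(m)$ is an algebra (Lemma~\ref{lemma:contraction properties}) and both factors are continuous with compact support, $\tilde g_n\in D(\cE)\cap C_c(X)$ with $\supp\tilde g_n\subseteq\supp\eta\subseteq\Omega$, so each $\tilde g_n$ lies in the target set. Since $\eta=1$ on $K\supseteq\supp f$ we have $\eta f=f$, hence $\tilde g_n-f=\eta(g_n-f)$ and
$$\|\tilde g_n-f\|_2\leq\|\eta\|_\infty\|g_n-f\|_2\to 0.$$
The algebra estimate (Lemma~\ref{lemma:algebraic properties}) further gives $\cE(\tilde g_n)^{1/2}\leq\cE(g_n)^{1/2}+N\,\cE(\eta)^{1/2}$, so $(\tilde g_n)$ is bounded in the Hilbert space $(D(\cE),\|\cdot\|_\cE)$. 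Since $D(\cE)\hookrightarrow L^2(m)$ continuously, $L^2(m)$ embeds into the dual $D(\cE)^*$ via the inner product, so any form-weak cluster point of $(\tilde g_n)$ must coincide with its $L^2$-limit $f$. Consequently $\tilde g_n\to f$ weakly in $(D(\cE),\as{\cdot,\cdot}_\cE)$.

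The main obstacle is passing from weak to strong form-norm convergence: the obvious algebra bound $\cE(\eta(g_n-f))^{1/2}\leq\|\eta\|_\infty\cE(g_n-f)^{1/2}+\|g_n-f\|_\infty\cE(\eta)^{1/2}$ is insufficient because $\|g_n-f\|_\infty$ is only bounded, not small. I would circumvent this using Mazur's lemma: from the form-weak convergence $\tilde g_n\to f$, there exist convex combinations $h_n=\sum_k\lambda_k^{(n)}\tilde g_k$ (finite, with $\lambda_k^{(n)}\geq 0$ and $\sum_k\lambda_k^{(n)}=1$) converging to $f$ in form norm. Since the target set $\{\psi\in D(\cE)\cap C_c(X):\supp\psi\subseteq\Omega\}$ is convex, each $h_n$ still lies in it, proving the claimed density.
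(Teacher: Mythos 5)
Your argument is correct and follows essentially the same route as the paper: multiply a core approximating sequence by a cutoff supported in $\Omega$, note that the resulting sequence lies in the (convex) target set, is $\cE$-bounded and converges to $f$ in $L^2(m)$, hence converges form-weakly to $f$, and then upgrade to strong convergence using convexity (the paper invokes the Banach--Saks theorem where you invoke Mazur's lemma). One small caveat: the truncation $g_n\mapsto (g_n\wedge N)\vee(-N)$ is not known to be continuous in form norm in general, only to preserve $\cE$-boundedness and $L^2$-convergence --- but these weaker facts are all your argument actually uses, so nothing breaks.
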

 \begin{proof}
  Let $f\in D(\cE)$  and $K \subseteq \Omega$ compact such that $f \1_{X \setminus K} = 0$. Without loss of generality we assume $0 \leq f \leq 1$. Since $\cE$ is regular, there exists $\psi \in D(\cE) \cap C_c(X)$ with $\psi = 1$ on $K$ and $\psi = 0$ on $X \setminus \Omega$, cf. \cite[Exercise~1.4.1]{FOT}. Let now $(f_n)$ a sequence in $D(\cE)\cap C_c(X)$ with $f_n \to f$ with respect to the form norm and set $g_n := \psi \cdot (f_n \vee 0) \wedge 1 $.  Then $(g_n)$ is an $\cE$-bounded sequence in $D(\cE) \cap C_c(X)$ with $\supp g_n \subseteq \Omega$ that converges in $L^2(m)$ towards $f$. By Lemma~\ref{lemma:existence of a weakly convergent subnet} it also converges $\cE$-weakly. Now the Banach-Saks theorem implies the desired statement.  
 \end{proof}
The following density statements are the main insights of this section. Its proof is based on the ideas of \cite{MeSe}.
\begin{theorem}\label{theorem:continous functions are dense}
\begin{itemize}
 \item[(a)]  $D(\Em) \cap C(X)$ is dense in $D(\Em)$ with respect to $\Em$.
 \item[(b)] $D(\Er) \cap C(X)$ is dense in $D(\Er)$ with respect to $\Er$.
 \item[(c)] $D(\Em_a) \cap C(X)$ is dense in $D(\Em_a)$ with respect to the form norm of $\Em_a$.
\item[(d)] $D(\Era) \cap C(X)$ is dense in $D(\Era)$ with respect to the form norm of $\Era$. 
\end{itemize}

\end{theorem}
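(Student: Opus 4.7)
The plan is to prove (a) first and to deduce (b), (c), (d) by restriction to $L^2(m)$ and by separately handling the killing part. For (a) I would use a three--step reduction: reduce to bounded $f$, then to compactly supported bounded $f$, and finally invoke the regularity of $\cE$ to produce continuous approximations.

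For Step~1, given $f \in D(\Em)$, Lemma~\ref{lemma:bounded approximation} shows that the truncations $f^{(k)} := (f\wedge k)\vee(-k)$ satisfy $f^{(k)} \tom f$ and $\Em(f^{(k)} - f) \to 0$, so it suffices to treat $f \in D(\Em) \cap L^\infty(m)$. For Step~2, choose an exhausting sequence of relatively compact open sets $(G_n)$ with $\overline{G_n} \subseteq G_{n+1}$ and the associated partition of unity $(\psi_n)$ from Lemma~\ref{lemma:regular partitions of unity}; set $\chi_N := \sum_{n=1}^N \psi_n \in D(\cE) \cap C_c(X)$, so $0 \leq \chi_N \leq 1$ and $\chi_N \nearrow 1$ pointwise. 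For $f \in D(\Em) \cap L^\infty(m)$, Theorem~\ref{theorem:properties of concatenated forms}(b) yields $\chi_N f \in D(\cE) \cap L^\infty(m)$ with compact support in $\supp \chi_N$, and the decisive claim is that $\chi_N f \to f$ in the form topology of $\Em$. For Step~3, since $\chi_N f$ is compactly supported in $D(\cE) \cap L^\infty(m)$, Lemma~\ref{lemma:help domains} combined with the regularity of $\cE$ approximates it in $\cE$-norm by a sequence in $D(\cE) \cap C_c(X)$; because Theorem~\ref{theorem:properties of concatenated forms}(a) gives $\Em(h) \leq \cE(h)$ for every $h \in D(\cE)$, this approximation is valid in the $\Em$-topology and the approximants lie in $D(\Em) \cap C(X)$. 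A diagonal extraction combines the three steps and yields (a).

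The heart of the argument is the claim in Step~2. My approach uses Lemma~\ref{lemma:alternative formula em} to write $\Em(\chi_N f - f) = \lim_{k\to\infty} \cE_{\chi_k}(\chi_N f - f)$. For $k$ so large that $\chi_k = 1$ on $\supp \chi_N$ (hence $\chi_k \chi_N = \chi_N$ and $\chi_k \chi_N^2 = \chi_N^2$), a direct calculation reduces this to
\[
\cE_{\chi_k}(\chi_N f - f) = \cE((\chi_k - \chi_N) f) - \cE((\chi_k - 2\chi_N + \chi_N^2)\, f^2, \chi_k),
\]
and the task is to show that this tends to zero first as $k\to\infty$ and then as $N\to\infty$. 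For strongly local $\cE$ the relevant supports become disjoint and the cross terms collapse cleanly; in the general case one must control jump contributions between $\supp \chi_N$ and its complement uniformly in $k$ using only the $\Em$-finiteness of $f$ and the bounds from Lemma~\ref{lemma:maximal silverstein extension technical lemma}. This is the step where I expect the main technical obstacle to lie.

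For (c), observe that $D(\Em_a) = D(\Em) \cap L^2(m)$ and that the form topology of $\Em$ restricted to this intersection coincides with the form--norm topology of $\Em_a$; since $\chi_N f$ and the continuous approximants of Step~3 are compactly supported and bounded, hence in $L^2(m)$, (a) immediately yields (c). For (b) one additionally needs $\Ek(\chi_N f - f) \to 0$; the pointwise identity $|\chi_N f - f| = (1-\chi_N)|f| \leq |f|$ together with $\chi_N f - f \tom 0$, the monotonicity and lower semicontinuity of $\Ek$ from Theorem~\ref{lemma:properties of ek}, and the alternative formula of Lemma~\ref{lemma:alternative formula ek} permit a dominated--convergence argument; Step~3 still applies because $\cE$ and $\Er$ agree on $D(\cE)$, so $\cE$-convergence of the continuous approximants yields $\Er$-convergence. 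Part (d) is the restriction of (b) to $L^2(m)$.
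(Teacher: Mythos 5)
Your Steps~1 and~3 are sound, but the ``decisive claim'' of Step~2 --- that $\chi_N f \to f$ in the form topology of $\Em$ for $f \in D(\Em)\cap L^\infty(m)$ --- is false in general, and it is not a matter of controlling jump terms more carefully. The obstruction appears already for $f\equiv 1$: by Theorem~\ref{theorem:maximality main part} the main part is always recurrent, so $1\in D(\Em)$ with $\Em(1)=0$, and Cauchy--Schwarz gives $\Em(\chi_N f-f)=\Em(\chi_N-1)=\Em(\chi_N)$. For a weighted graph this is $\sum_{x,y}b(x,y)(\chi_N(x)-\chi_N(y))^2$, which is bounded below by the capacity of the annulus $G_{N+1}\setminus\overline{G_{N-1}}$ across which $\chi_N$ drops from $1$ to $0$; on a transient graph such as $\Z^3$ this is bounded below by $\mathrm{cap}(G_1)>0$ for \emph{every} choice of exhaustion, and even on $\Z$ it stays bounded away from $0$ for the standard exhaustion. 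The same happens for strongly local forms, where the term $f\,\nabla\chi_N$ supported on the annulus persists, so the remark that the cross terms ``collapse cleanly'' in the local case is also incorrect. There is a global reason the scheme cannot work: your Steps~2 and~3 together would produce approximants in $C_c(X)\cap D(\cE)$, i.e.\ they would prove that compactly supported functions are dense in $D(\Em_a)$ in form norm; in Example~\ref{example:mainfolds} with $V=0$ this would say $W^1_0(M)=W^1(M)$, which fails precisely when Dirichlet and Neumann conditions at infinity differ. Cutting $f$ off at infinity costs energy that need not vanish.

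The paper's proof avoids the cutoff of $f$ entirely. Each localized piece $f\psi_n$ lies in $D(\cE)$ by the ideal property and vanishes outside the compact set $\supp\psi_n\subseteq\Omega_n$, so Lemma~\ref{lemma:help domains} provides $g_n\in D(\cE)\cap C_c(X)$ with $\supp g_n\subseteq\Omega_n$, $\cE(f\psi_n-g_n)\le\varepsilon/4^n$ and $\|g_n\|_\infty\le\|f\|_\infty$. The approximant is $g:=\sum_n g_n$: it is continuous because the sum is locally finite, but it is \emph{not} compactly supported --- it follows $f$ out to infinity, so no boundary term arises. Since $\Ep(f-g)$ depends only on $(f-g)|_{\supp\varphi}$, where the difference is the finite sum $\sum_{n\le N+2}(f\psi_n-g_n)$, the triangle inequality for $\Ep^{1/2}$ together with $\Ep\le\cE$ on $D(\cE)$ (Theorem~\ref{theorem:properties of concatenated forms}) gives $\Ep(f-g)\le\varepsilon$ uniformly in $\varphi$, and Lemma~\ref{lemma:alternative formula em} yields $\Em(f-g)\le\varepsilon$. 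If you replace $\chi_Nf$ by this globally defined $g$, your remaining observations for (c), (d) and for the killing part in (b) --- where the monotonicity of $\Ek$ and the geometric decay $\cE(f\psi_n-g_n)\le\varepsilon/4^n$ do the work --- can be carried out essentially as in the paper.
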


\begin{proof}

 Starting with $f \in D(\Em)$, we construct a continuous function $g$ which is close to $f$ with respect to $\Em$. We then show that under the additional assumption $f \in D(\Er)$, the previously constructed function $g$ is also close to $f$ with respect $\Er$. Similarly, we treat the $L^2$-statement.

 (a): Since bounded functions are dense in domains of energy forms, see Lemma~\ref{lemma:bounded approximation}, it suffices to prove that each function $f \in D(\Em) \cap L^\infty(m)$ can be approximated by continuous ones. Let $(G_n)$ a   sequence of  relatively compact  open sets with   $\overline{G_n} \subseteq G_{n+1}$, for all $n \in \N$, and $\bigcup_n G_n = X$,  and let $(\psi_n)$ in $D(\cE) \cap C_c(X)$ a corresponding partition of unity as in Lemma~\ref{lemma:regular partitions of unity}. We set $\Omega_n := G_{n+1} \setminus \overline{G_{n-1}}$. 
 
 Let $\varepsilon > 0$. By construction $D(\cE) \cap L^\infty(m)$ is an algebraic ideal in $D(\Em) \cap L^{\infty}(m)$, cf. proof of Theorem~\ref{theorem:er is a silverstein extension}. Hence, it follows that  $f \psi_n \in D(\cE)$ with $f \psi_n = 0$ a.e. on $(\supp \psi_n)^c$. By definition  $\supp \psi_n $ is a compact subset of $\Omega_n$ and so Lemma~\ref{lemma:help domains} yields a function $g_n \in C_c(X) \cap D(\cE)$ with $\supp g_n \subseteq \Omega_n$,  
 $$\cE(f \psi_n - g_n) \leq \frac{\varepsilon}{4^{n}} \text{ and } \|f\psi_n - g_n\|_2 \leq \frac{\varepsilon}{2^{n}}.$$
 Moreover,  $g_n$ can be chosen to also satisfy $\|g_n\|_\infty \leq \|f\|_\infty$, see Lemma~\ref{lemma:bounded approximation}. Let $g := \sum_{n = 1}^\infty g_n$. Since $(\Omega_n)$ is a locally finite cover of $X$, we obtain that $g$ is a bounded continuous function on $X$.
 
 Let now $\varphi \in C_c(X) \cap D(\cE)$ with $0 \leq \varphi \leq 1$. There exists $N$ such that $\supp \varphi \subseteq G_N$. Using that $\psi_n$ and $g_n$ are supported on $\Omega_n  = G_{n+1}\setminus \overline{G_{n-1}}$, we infer
 $$\sum_{n = 1}^{N+2} \psi_n = 1 \text{ and }   \sum_{n = 1}^{N+2} g_n = g \text{ on } G_N , $$
 so that
 $$f-g = \sum_{n = 1}^{N + 2} ( f \psi_n - g_n) \text{ on } \supp \varphi.$$
 It follows from the definition of $\Ep$  that the value of $\Ep(f-g)$ only depends on the function $f-g$ on $\supp \varphi$. We obtain
 \begin{align*}
  \Ep(f - g)^{1/2}  &= \Ep\left(\sum_{n = 1}^{N + 2} ( f \psi_n - g_n)\right)^{1/2} \leq \sum_{n = 1}^{N + 2}\cE(f\psi_n - g_n)^{1/2} \leq \varepsilon^{1/2},
 \end{align*}
 where we used Theorem~\ref{theorem:properties of concatenated forms} for the first inequality. Since $\varphi \in C_c(X) \cap D(\cE)$ was arbitrary and $\cE$ is regular, it follows from Lemma~\ref{lemma:alternative formula em} that $\Em(f-g) \leq \varepsilon.$

 (b): Let now $f \in D(\Er) \cap L^\infty(m)$ and $g$ as in the proof of (a). We prove $\Ek(f-g) \leq \varepsilon$. For this estimate  we employ Lemma~\ref{lemma:alternative formula ek}. Let $\varphi \in D(\cE)$ with $0 \leq \varphi \leq 1$ arbitrary and choose a sequence $\varphi_n \in D(\cE) \cap C_c(X)$ with $0 \leq \varphi_n \leq 1$ and $\varphi_n \to \varphi$ with respect to the form norm of $\cE$.  Since $f$ and $g$ are bounded, Lemma~\ref{lemma:algebraic properties} yields
 \begin{align*}
  \Em(\varphi_n(f-g))^{1/2} &\leq \Em(f-g)^{1/2} + \|f - g\|_\infty \Em(\varphi_n)^{1/2} \\&\leq \Em(f-g)^{1/2} + \|f - g\|_\infty \cE(\varphi_n)^{1/2}.
 \end{align*}
 In particular, the sequence $(\varphi_n(f-g))$ is $\Em$-bounded and so Theorem~\ref{lemma:properties of ek} shows 
 $$\Ek(\varphi(f-g)) \leq \liminf_{n\to \infty} \Ek(\varphi_n (f-g)). $$
 Since $f-g \in D(\Em) \cap L^\infty(m)$, this computation and Lemma~\ref{lemma:alternative formula ek} show  that it suffices to prove the estimate 
 $\Ek(\psi(f-g)) \leq \varepsilon$
 for each $\psi \in D(\cE) \cap C_c(X)$ with $0\leq \psi \leq 1$. Since $\supp \psi$ is compact, it is included in $G_N$ for some $N \in  \N$. The properties $\psi_n$ and $g_n$, cf.  (a), yield 
 \begin{align*}
  \Ek(\psi (f-g))^{1/2} &= \Ek \left(\psi \sum_{n = 1}^{N + 2} ( f \psi_n - g_n) \right)^{1/2} \\&\leq \sum_{n = 1}^{N + 2} \Ek \left(\psi  ( f \psi_n - g_n) \right)^{1/2}\\
  &\leq \sum_{n = 1}^{N + 2} \cE (f \psi_n - g_n)^{1/2}\\
  &\leq \varepsilon^{1/2}.
 \end{align*}
 For the second inequality we used the monotonicity of $\Ek$ and that $\Ek(g) \leq \cE(g)$ for $g \in D(\cE)$. This proves (b).
 
 (c) + (d): Let $f \in D(\Em_a) = D(\Em)\cap L^2(m)$ and let $g$ as in (a). It remains to proof $g \in L^2(m)$ and estimates on the $L^2$-norm of $f-g$. Fatou's lemma and the properties of $\psi_n,g_n$, cf. (a), yield
 $$\|f - g\|_2 \leq \sum_{n = 1}^\infty\|f\psi_n - g_n\|_2 \leq \varepsilon.$$
 Since $f \in L^2(m)$, this implies $g \in L^2(m)$ and the claim is proven. 
 \end{proof}

\begin{remark}
 It seems to be a new observation that continuous functions are dense in the domain of the reflect form and the main part. With the same arguments it can even be strengthened. If $\mathcal{C}$ is a special standard core for $\cE$ in the sense of \cite[Section~1.1]{FOT} and $\mathcal B \subseteq C(X)$ is an algebra such  that $\mathcal{C}$ is an algebraic ideal in $\mathcal{B}$ and 
 $$\sum_{n = 1}^\infty {\psi_n} \in \mathcal{B}$$
 for all sequences  $(\psi_n)$ in $\mathcal{C}$  for which there exist neighborhoods $U_n$ of $\supp \psi_n$ that form a locally finite cover of $X$, then $\cB \cap D(\Em)$ is dense in $D(\Em)$ with respect to $\Em$. Similar statements also hold for $\Em_a$, $\Er$ and $\Era$ with the corresponding form norms.
 
 In the example of weighted manifolds, see Example~\ref{example:mainfolds}, the algebras $C_c^\infty(M)$ and $C^\infty(M)$  satisfy the above conditions. Therefore, $C^\infty(M)$ is dense in $D(\D^{(M)})$ with respect to $\D^{(M)}$ and in $D(\D^{(M)}_a)$ with respect to the form norm. The latter assertion is nothing more than the well known fact that  $C^\infty(M)$ is dense in $W^1(M)$, which is the main result  of \cite{MeSe}.
 \end{remark}

With the help of the previous theorem and Gelfand theory of commutative $C^*$-algebras we can construct a compactification   of the underlying space such that $\Er_a$ (and also $\Em_a$) can be considered to be a regular Dirichlet form on this compactification (or the compactification minus one point). Since this type of construction is essentially known, see e.g. \cite[Proof of Theorem~6.6.5]{CF} or \cite{KLSS} for Dirichlet forms on graphs, we only give a brief sketch and leave details to the reader.

Let $\cB$ a countably generated subalgebra of $D(\Er_a) \cap C_b(X)$ that is dense in $D(\Er_a)$ with respect to the form norm and in $C_c(X)$ with respect to the uniform norm. The existence of such an algebra follows from Theorem~\ref{theorem:continous functions are dense} and the separability of $(X,d)$ and   $L^2(m)$.  The (complexification of the)  uniform closure of $\cB$ equipped with the uniform norm is a commutative  $C^*$-algebra, which we denote by $\cA$. By construction $C_c(X) \subseteq \cA$ and so $C_0(X)$, the space of continuous functions that vanish at infinity, is also contained in $\cA$. Gelfand theory of commutative $C^*$-algebras  implies that there exists a compactification $\hat X$ of $X$  such that either $C(\hat X) \simeq \cA$ (this happens if $1 \in \cA$) or there exists a point $\infty \in \hat X$ such that $\cA \simeq C_0(\hat X \setminus \{\infty\})$ (this happens if $1 \not \in \cA$). Moreover, the corresponding isomorphisms are given by $C(\hat X) \to \cA$, $f \mapsto f|_X$ or $C_0(\hat X \setminus \{\infty\}) \to \cA$, $f \mapsto f|_X$, respectively.  Since $\cA$ is separable, $\hat X$ is a separable metric space and if $1 \not \in \cA$, then $\hat X \setminus \{\infty\}$ is locally compact. 

It can be proven that $1 \in \cA$ if and only if $\Ek(1) < \infty$ and $m$ is finite.  To simplify notation, in what follows we let
$$\hat X' := \begin{cases}
              \hat X &\text{if } 1 \in \cA,\\
              \hat X \setminus \{\infty\} &\text{if }1 \not \in \cA.
             \end{cases}
$$
With this convention $\cA$ is isomorphic to $C_0(\hat X')$ and the isomorphism is given by $C_0(\hat X') \to \cA,$ $f \mapsto f|_X$.

We equip $\hat X'$ with the Borel $\sigma$-algebra and extend the measure $m$ to $\hat X'$ by letting $\hat m (\hat X' \setminus X) = 0$. Then $\hat X$   is a Radon measure of full support on $\hat X'$.

The embedding map $\tau:X \to \hat X'$, $x \mapsto x$ induces a unitary operator $T: L^2(\hat X', \hat m) \to L^2(X, m)$, $f \mapsto f \circ \tau$. We define the quadratic forms $\hat \cE :L^2(\hat X',\hat m) \to [0,\infty]$, $\hat \cE(f) = \cE(Tf)$ and $\hat \Era:L^2(\hat X', \hat m) \to [0,\infty]$, $\hat \Era (f) = \Era(Tf)$. 

\begin{theorem} \label{theorem:regularity active main part}
 $\hat \Era$ is a regular Dirichlet form on $L^2(\hat X',\hat m)$.  
\end{theorem}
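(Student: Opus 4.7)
The plan is to regard $\hat\Era$ as the image of $\Era$ under the unitary pullback $T$. Since $\hat m$ is concentrated on $\tau(X)$, the operator $T$ intertwines pointwise operations on equivalence classes of functions (normal contractions, products, lattice operations), and hence transports closedness, dense definition, and the Markov property from $\Era$ to $\hat\Era$; this yields the Dirichlet form property without additional work.

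For regularity one must show that $C_c(\hat X') \cap D(\hat\Era)$ is dense in $D(\hat\Era)$ for the form norm and in $C_c(\hat X')$ for the uniform norm. The natural candidate core is $\hat\cB := T^{-1}(\cB) \subseteq C_0(\hat X') \cap D(\hat\Era)$. By construction $\hat\cB$ is form-norm dense in $D(\hat\Era)$ ($T$ is an isometry of the respective form inner products) and uniformly dense in $C_0(\hat X')$ (Gelfand--Naimark applied to $\cA \simeq C_0(\hat X')$). The remaining task is to pass from $\hat\cB$ to its compactly supported elements in both topologies.

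To produce compact support I would arrange, by enlarging the original countable generating set if necessary, that $\cB$ contains $D(\cE) \cap C_c(X)$; this is consistent with the density requirements because of the regularity of $\cE$ and because the proof of Theorem~\ref{theorem:continous functions are dense} already builds its approximants as locally finite sums of elements of $D(\cE) \cap C_c(X)$. Since $\tau$ carries $X$-compacts to $\hat X'$-compacts, every function in $D(\cE) \cap C_c(X)$ extends under Gelfand to a function in $C_c(\hat X') \cap D(\hat\Era)$. Using Lemma~\ref{lemma:regular partitions of unity} on $X$ one then obtains an increasing family of bumps $\hat\chi_k \in D(\hat\Era) \cap C_c(\hat X')$ with $\hat\chi_k \nearrow 1$ and supports exhausting $\hat X'$. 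Uniform density of $C_c(\hat X') \cap D(\hat\Era)$ in $C_c(\hat X')$ is then immediate: given $f \in C_c(\hat X')$ whose support is contained in $\{\hat\chi_k = 1\}$, approximate $f$ uniformly by elements of $\hat\cB$ and multiply each term by $\hat\chi_k$.

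The main obstacle is the form-norm density: one must show $\hat\chi_k \hat f \to \hat f$ in $\|\cdot\|_{\hat\Era}$ for $\hat f \in \hat\cB$. The $L^2$-convergence is immediate by dominated convergence, since $\hat f \in C_0(\hat X')$ and $\hat\chi_k \nearrow 1$. For the energy part I would use the algebra inequality of Lemma~\ref{lemma:algebraic properties}, together with boundedness of $\cE(\hat\chi_k)$ along a carefully chosen exhausting sequence, to get that $(\hat\chi_k \hat f)$ is $\hat\Era$-bounded; Lemma~\ref{lemma:existence of a weakly convergent subnet} then yields $\hat\Era$-weak convergence, and a Banach--Saks passage (as in the proof of Lemma~\ref{lemma:help domains}) upgrades this to form-norm convergence along convex combinations. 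A diagonal argument between the $\hat\cB$-approximation and the bump cutoff completes the proof. The delicate point, and the genuine technical content of the sketch, is the choice of the cutoffs $\hat\chi_k$ with $\cE(\hat\chi_k)$ controlled, which is precisely what the author leaves to the reader.
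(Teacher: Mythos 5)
Your reduction to the core candidate $\hat\cB := T^{-1}\cB$ is exactly the paper's: the unitary, pointwise-defined $T$ gives the Dirichlet form property, and $\hat\cB$ is uniformly dense in $C_0(\hat X')$ and form-norm dense in $D(\hat\Era)$. The paper then finishes in one line by citing \cite[Lemma~1.4.2]{FOT}, which upgrades such a $C_0$-core to a $C_c$-core by truncating in the \emph{range}: for $u\in C_0(\hat X')\cap D(\hat\Era)$ one composes with a function $\phi_\varepsilon$ vanishing near $0$, so that $\phi_\varepsilon\circ u$ is supported in the compact set $\{|u|\ge\varepsilon'\}\subseteq\hat X'$ and converges to $u$ uniformly and in form norm by the Markov property.

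Your passage from $C_0$ to $C_c$ instead cuts off in \emph{space} with bumps $\hat\chi_k$ built from Lemma~\ref{lemma:regular partitions of unity}, and this is where the argument breaks. Those bumps are supported in compact subsets of $X$, and their Gelfand extensions vanish identically on $\hat X'\setminus X$; hence their supports exhaust $X$, not $\hat X'$, and $\{\hat\chi_k=1\}\subseteq X$. Consequently: (a) a function in $C_c(\hat X')$ that is nonzero somewhere on the boundary $\hat X'\setminus X$ --- and such functions exist precisely when the compactification is nontrivial, which is the generic and interesting case --- has support in no $\{\hat\chi_k=1\}$ and cannot be uniformly approximated by functions of the form $\hat\chi_k\cdot(\cdot)$, all of which vanish on $\hat X'\setminus X$; so uniform density in $C_c(\hat X')$ is not established. (b) The ``main obstacle'' you isolate, namely $\hat\chi_k\hat f\to\hat f$ in $\|\cdot\|_{\hat\Era}$, is not merely delicate but false in general: since $\Era$ is a Silverstein extension, $\hat\chi_k\hat f\in D(\cE)\cap L^\infty(m)$, and $\cE$ is closed and agrees with $\Era$ on $D(\cE)$, so this convergence would place every $\hat f\in\cB$ --- and then all of $D(\Era)$ --- inside $D(\cE)$, forcing $\cE=\Era$; this already fails for the Dirichlet form on $W_0^1((-1,1))$, whose reflected form lives on $W^1((-1,1))$. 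The compact subsets of $\hat X'$ that matter are exactly those reaching into $\hat X'\setminus X$, and only range truncation of functions vanishing at infinity of $\hat X'$ produces them; no exhaustion by compacts of $X$ can.
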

\begin{proof}
 Since $U$ is unitary and defined via the pointwise map $\tau$, it follows immediately that $\hat \Era$ is a Dirichlet form. Since $\cA$ is isomorphic to $C_0(\hat X')$ and the isomorphism is given by $C_0(\hat X') \to \cA, f \mapsto f|_X$, the operator $T^{-1}$ maps the algebra $\cB$ to a uniform dense subalgebra of $C_0(\hat X')$. Since $T^{-1} \cB \subseteq D(\hat \Era)$, this implies that $D(\hat \Era) \cap C_0(\hat X') \supseteq T^{-1} \cB$ is uniformly dense in $C_0(\hat X')$. The properties of $\cB$ also imply that $T^{-1} \cB$ is dense in $D(\hat \Era)$ with respect to the form norm. The regularity of $\hat \Era$ now follows from \cite[Lemma~1.4.2]{FOT}. 
\end{proof}

\begin{remark}
 For the potential theoretic notions in this remark we refer to \cite[Chapter~2]{FOT}. The form  $\hat \Era$ is regular and so there exists an associated $m$-symmetric Hunt process  on $X'$, see \cite[Theorem~7.2.1]{FOT}. Since functions in $D(\hat \cE) \cap C_c(X)$ (viewed as functions on $\hat X'$)  vanish on $\hat X' \setminus X$ and are dense in $D(\hat \cE)$ with respect to the form norm, it is also not difficult to prove the identity 
 $$D(\hat \cE) = \{f \in D(\hat \Era) \mid \tilde f = 0 \text{ q.e. on } \hat X' \setminus X\},$$
 where $\tilde f$ is a quasi-continuous version of $f$.  This situation is generic for Silverstein extensions of quasi-regular Dirichlet forms:
 
 Given a quasi-regular Dirichlet form $\cE$ on some space $X$ and a Silverstein extension $\tilde \cE$, it follows from \cite[Theorem~6.6.5]{CF} that there exists a locally compact separable metric space $\hat X$, a quasi-open subset $\tilde X$ of $\hat X$ that is quasi-homeomorphic with $X$ and a pointwise transformation (almost as above) such that $\tilde \cE$ can be considered to be a regular Dirichlet form on $\hat X$ and 
 $$D(\cE) =  \{f \in D(\tilde \cE) \mid \tilde f = 0 \text{ q.e. on } \hat X \setminus \tilde X\}.$$ 
 The new information here is that if $\cE$ is regular and $\tilde \cE = \Era$, then $\hat X$ can be chosen to be a compactification (minus one point) of $X$. For this insight the information of Theorem~\ref{theorem:continous functions are dense} is essential.
\end{remark}

\appendix

\section{Closed forms on metrizable topological vector spaces}\label{appendix:closed forms on l0}

In this section we provide a short introduction to quadratic forms on metrizable topological vector spaces. All of the results presented here are special cases of the theory developed in \cite[Chapter~1]{Schmi}, which treats quadratic forms on general topological vector spaces. Since on metrizable topological vector spaces many of the arguments in \cite{Schmi} simplify substantially, we chose to include proofs for the convenience of the reader. 

Note that all the results of this section are well known for quadratic forms on $L^2(m)$. However, their classical proofs use that $L^2(m)$ is locally convex in an essential way and in the main text we apply the theory to forms on $L^0(m)$, which is not locally convex in general.  

Let $V$ be a real vector space. We call $q:V \to [0,\infty]$ a {\em quadratic form on $V$} if it satisfies
\begin{itemize}
 \item $q(\lambda f) = |\lambda|^2 q(f)$ for all $\lambda \in \R$, $f \in V,$
 \item $q(f+g) + q(f-g) = 2q(f) + 2q(g)$ for all $f,g \in V$.
\end{itemize}
Here we use the conventions $\infty \cdot 0 = 0, \infty \cdot x = \infty$  for $x \in (0,\infty]$ and $y + \infty = \infty$ for $y \in [0,\infty]$. The  {\em domain} of a quadratic form $q$ is $D(q) = \{f \in V \mid q(f) < \infty\}$ and its {\em kernel} is $\ker q = \{f \in V \mid q(f) = 0\}$. Both sets are subspaces of $V$. For checking whether or not a functional is a quadratic form, one does not need to verify identities but certain inequalities on its domain.

\begin{lemma}\label{lemma:characterization quadratic forms}
For $q:V \to [0,\infty]$ the following assertions are equivalent.
\begin{itemize}
 \item[(i)] $q$ is a quadratic form.
 \item[(ii)] For all $\lambda \in \R$ and all $f,g \in V$ with $q(f),q(g) < \infty$ the inequalities
 $$2q(f) + 2q(g) \leq q(f + g) + q(f-g) \text{ and } q(\lambda f) \leq |\lambda|^2 q(f)$$
 hold.
\end{itemize}
\end{lemma}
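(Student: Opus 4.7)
The direction (i)$\Rightarrow$(ii) is immediate, since the parallelogram and homogeneity \emph{equalities} that define a quadratic form trivially imply the one-sided \emph{inequalities} of (ii). The content is in (ii)$\Rightarrow$(i), which I would split into extracting the homogeneity identity and extracting the parallelogram identity.

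For the homogeneity identity, I would apply the inequality $q(\lambda f) \leq |\lambda|^2 q(f)$ with $\lambda = 0$ and any $f \in D(q)$ (using the convention $0\cdot\infty = 0$) to obtain $q(0) \leq 0$, hence $q(0) = 0$. For $\lambda \neq 0$ and $q(f) < \infty$, the inequality gives $q(\lambda f) < \infty$; applying it a second time to the pair $(\lambda^{-1}, \lambda f)$ then yields $q(f) \leq |\lambda|^{-2} q(\lambda f)$, and the two bounds squeeze to the desired equality $q(\lambda f) = |\lambda|^2 q(f)$. The case $q(f) = \infty$ follows by contraposition: if $q(\lambda f) < \infty$ for some $\lambda \neq 0$, the very same argument forces $q(f) < \infty$, contradicting the assumption.

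For the parallelogram identity on $f,g$ with $q(f), q(g) < \infty$, the direction $\geq$ is exactly the hypothesis in (ii). For the reverse direction I would use the classical substitution $\tilde f := (f+g)/2$, $\tilde g := (f-g)/2$, noting that $\tilde f + \tilde g = f$ and $\tilde f - \tilde g = g$. Applying (ii) to the pair $(\tilde f, \tilde g)$ and invoking the homogeneity identity established above (to rewrite $q(\tilde f) = q(f+g)/4$ and $q(\tilde g) = q(f-g)/4$) produces $q(f+g) + q(f-g) \leq 2q(f) + 2q(g)$, which closes the gap. The identity is then propagated to pairs where $q(f)$ or $q(g)$ is infinite by applying what has just been proved on $D(q)$ to auxiliary finite pairs such as $(f+g, -g)$; this rules out, by contradiction with the extended-real arithmetic, the only scenario that would otherwise break the identity.

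The main obstacle is that the substitution above tacitly requires $\tilde f, \tilde g \in D(q)$, equivalently $q(f+g), q(f-g) < \infty$. From (ii) one reads off only a \emph{lower} bound on $q(f+g) + q(f-g)$, so inferring the finiteness needed for the substitution is the delicate point. In the one application of the lemma in this paper (to the killing part $\Ek$) this is a non-issue, because the inequality $2\Ek(f)+2\Ek(g) \leq \Ek(f+g)+\Ek(f-g)$ has been verified for $f,g \in D(\Em)$, and $D(\Em)$ is already a vector subspace of $L^0(m)$; so closure under sum and difference is available for free at the cost of slightly strengthening the hypothesis one feeds into the lemma.
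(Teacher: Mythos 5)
The paper states this lemma without proof in Appendix~A, so there is no in-paper argument to compare against; I can only assess your proposal on its own terms. Your homogeneity argument is correct and complete, and the substitution $\tilde f = (f+g)/2$, $\tilde g = (f-g)/2$ is indeed the standard device for upgrading the one-sided parallelogram inequality to an identity. However, the obstacle you flag in your last paragraph is not a detail to be deferred to the application: it is a genuine gap, and under the literal reading of (ii) it cannot be closed, because the implication (ii) $\Rightarrow$ (i) is then false. Take $V = \R^2$ and let $q(x,y) = x^2 + y^2$ when $xy = 0$ and $q(x,y) = \infty$ otherwise. Both inequalities of (ii) hold for all $f,g$ with finite values (when $f$ and $g$ lie on the same coordinate axis one even has equality; when they lie on different axes, $f+g$ and $f-g$ leave the axes and the right-hand side is $+\infty$), yet $q$ violates the parallelogram identity at $f=(1,0)$, $g=(0,1)$, where $2q(f)+2q(g)=4$ but $q(f+g)+q(f-g)=\infty$. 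So no argument can finish from the hypotheses as you (and the lemma) literally state them.

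The repair is exactly what your closing remark gestures at, and it must be built into the proof rather than left as a comment on the application: one has to require the inequality $2q(f)+2q(g) \le q(f+g)+q(f-g)$ for \emph{all} $f,g \in V$ in the extended-real sense (equivalently, on a vector subspace containing $D(q)$ off which $q \equiv \infty$). Under that hypothesis your substitution closes at once: applying the inequality to $(\tilde f,\tilde g)$ gives $2q(\tilde f)+2q(\tilde g) \le q(f)+q(g) < \infty$, which simultaneously yields $q(f+g)=4q(\tilde f)<\infty$, $q(f-g)=4q(\tilde g)<\infty$ and the reverse inequality. The remaining case where $q(f)$ or $q(g)$ is infinite then needs the short contradiction you sketch: if $q(f+g)$ and $q(f-g)$ were both finite, the identity just established applied to the pair $(f+g,\,f-g)$ would force $q(2f)<\infty$ and hence $q(f)<\infty$. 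This strengthened hypothesis is precisely what is available in the paper's one application, since there the inequality is verified for all $f,g \in D(\Em)$, which is a vector space, and $\Ek \equiv \infty$ outside it; so your instinct about the application is right, but as written the proposal proves a corrected lemma, not the stated one.
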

By a theorem of Jordan and von Neumann, see \cite{JvN}, any quadratic form induces a bilinear form on its domain via polarization, i.e., the mapping
$$q:D(q) \times D(q) \to \R,\, (f,g) \mapsto q(f,g) := \frac{1}{4}\left(q(f+g) - q(f-g)\right)$$
is bilinear. We abuse notation and write $q$ both for the quadratic form on $V$ and the induced bilinear form on $D(q)$. In this sense, we have $q(f) = q(f,f)$ for $f \in D(q)$. Values  of the form $q(f) = q(f,f)$ are called {\em on-diagonal values of $q$} and values of the form $q(f,g)$ for different $f,g \in D(q)$ are called {\em off-diagonal values of $q$}.  It is important to note that $q$ (as a bilinear form) satisfies the Cauchy-Schwarz inequality and $q^{1/2}$ (as the square root of a quadratic form) is a seminorm on $D(q)$.

A quadratic form $\tilde{q}$ is called an {\em extension} of the quadratic form $q$ if $D(q) \subseteq D(\tilde{q})$ and $q(f)  = \tilde q(f)$ for all $f \in D(q)$. There is a natural partial order on the cone of all quadratic forms on $V$. We say that two quadratic forms $q,\tilde q$ on $V$ satisfy $q \leq \tilde q$ if $q(f) \geq \tilde q(f)$ for all $f \in V$. Large forms in terms of this relation are the ones with large domains. Indeed, we have $q \leq \tilde q$ if and only if $D(q) \subseteq D(\tilde q)$ and $q(f) \geq \tilde q(f)$ for all $f \in D(q)$.

A (real) {\em metrizable topological vector space} is a real vector space $V$ equipped with a balanced  translation invariant metric $\rho:V \times V \to [0,\infty)$, i.e., a metric such that for all $f,g,h \in V$ and $\lambda \in \R$ with $|\lambda|\leq 1$ we have
$$\rho(\lambda f,0) \leq \rho(f,0) \text{ and } \rho(f + h,g+h) = \rho(f,g).$$
The vector space operations are continuous with respect to such a metric. For a sequence $(f_n)$ in $(V,\rho)$ that converges to $f \in V$ with respect to $\rho$ we write $f_n \overset{\rho}{\to} f$ for short.
\begin{remark}
 In the text we shall mainly consider  $V = L^2(m)$ or $V = L^0(m)$ for some $\sigma$-finite measure $m$ on $X$. In the latter case a translation invariant metric is given as follows.  For an  ascending sequence of sets of finite measure  $(F_n)$ with $\cup_n F_n = X$ we let
 $$d(f,g) := \sum_{n = 1}^\infty \frac{1}{2^n m(F_n)} \int_{F_n} |f-g| \wedge 1 d m.$$
 It is a translation invariant and balanced metric and it induces the topology of local convergence in measure. Moreover, the metric space $(L^0(m),d)$ is complete.
\end{remark}
In what follows we fix a metrizable topological vector space $(V,\rho)$. The following lemma is nontrivial because balls with respect to $\rho$ need not be convex.

\begin{lemma}[Convergent Ces\`aro means] \label{lemma:convergent cesaro means}
 Let $(f_n)$ be a convergent sequence in $(V,\rho)$ with limit $f$. Then there exists a subsequence $(f_{n_k})$ such that for all of its subsequences $(f_{n_{k_l}})$ we have
 \
 $$\lim_{N \to \infty} \frac{1}{N} \sum_{l = 1}^N f_{n_{k_l}} = f.$$
\end{lemma}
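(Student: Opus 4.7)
The plan is to reduce to the case $f=0$ by translation invariance (replace $f_n$ by $f_n-f$), and then construct the subsequence using only two features of the metric: the balanced property $\rho(\lambda g,0)\le \rho(g,0)$ for $|\lambda|\le 1$ and the joint continuity of scalar multiplication (which in particular gives $\lambda f_n \to 0$ as $n\to\infty$ for every fixed $\lambda\in\R$). The point is that without local convexity we cannot use convexity of balls, so I will have to extract enough "fast decay" to compensate.

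For the construction, I will pick $n_1<n_2<\cdots$ such that
$$\rho(k f_{n_k},0) \le 2^{-k} \quad\text{for every } k\in\N,$$
which is possible since for each fixed $k$ the sequence $(kf_n)_n$ tends to $0$. Given any further subsequence $g_l := f_{n_{k_l}}$ with $k_1<k_2<\cdots$, we have $k_l\ge l$, so writing $l g_l = (l/k_l)\cdot (k_l g_l)$ and using the balanced property with $|l/k_l|\le 1$ yields
$$\rho(l g_l,0) \le \rho(k_l g_l,0) \le 2^{-k_l} \le 2^{-l}.$$
This is the key decay estimate that a plain assumption $\rho(f_{n_k},0)\le 2^{-k}$ would not have produced.

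To bound the Cesàro mean $s_N := \frac{1}{N}\sum_{l=1}^N g_l$, I will apply the triangle inequality and, for each $1\le l\le N$, the balanced property once more (since $|1/(Nl)|\le 1$):
$$\rho\bigl(\tfrac{1}{N}g_l,0\bigr) = \rho\bigl(\tfrac{1}{Nl}\cdot l g_l,0\bigr)\le \rho(l g_l,0)\le 2^{-l}.$$
Then I split the sum at an arbitrary threshold $M$:
$$\rho(s_N,0)\le \sum_{l=1}^{M}\rho\bigl(\tfrac{1}{N}g_l,0\bigr) + \sum_{l=M+1}^{N} 2^{-l} \le \sum_{l=1}^{M}\rho\bigl(\tfrac{1}{N}g_l,0\bigr) + 2^{-M}.$$
For fixed $M$, continuity of scalar multiplication at $0$ gives $\rho(\tfrac{1}{N}g_l,0)\to 0$ as $N\to\infty$ for each of the finitely many $l\le M$, so $\limsup_N \rho(s_N,0)\le 2^{-M}$. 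Letting $M\to\infty$ finishes the proof.

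The only genuine difficulty is the non-convexity of balls in $(V,\rho)$, which blocks the naive estimate $\rho(\tfrac1N\sum g_l,0)\le\tfrac1N\sum\rho(g_l,0)$; the remedy above is to front-load the decay by choosing $n_k$ so that $kf_{n_k}$ is small, which combined with the balanced property turns the $\tfrac{1}{N}$ factor into a harmless multiplier and leaves a summable bound whose tail can be made arbitrarily small.
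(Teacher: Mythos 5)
Your proof is correct and follows essentially the same route as the paper's: reduce to $f=0$, select a subsequence with summable decay, split the Ces\`aro sum at a threshold $M$, bound the tail using the balanced property, and let $N\to\infty$ in the finite head via continuity of scalar multiplication. The only difference is that your front-loaded requirement $\rho(k f_{n_k},0)\le 2^{-k}$ is stronger than needed: the paper simply takes $\sum_k \rho(f_{n_k},0)<\infty$ and bounds $\rho\bigl(\tfrac{1}{N}g_l,0\bigr)\le\rho(g_l,0)$ directly by balancedness, which already gives a summable tail over any subsubsequence.
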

\begin{proof} Since $\rho$ is translation invariant, we may assume $f=0$. We choose a subsequence $(f_{n_k})$ such that 
$$\sum_{k = 1}^\infty \rho(f_{n_k},0) < \infty.$$
For an arbitrary subsubsequence $(f_{n_{k_l}})$  the translation invariance of $\rho$  implies
$$\rho\left(\frac{1}{N} \sum_{l = 1}^N f_{n_{k_l}}, 0\right) \leq \rho\left(\frac{1}{N} \sum_{l = 1}^M f_{n_{k_l}},0\right) + \sum_{l = M + 1} ^N\rho\left(\frac{1}{N}f_{n_{k_l}},0\right).$$
From this inequality and the balancedness of $\rho$ we infer
$$\rho\left(\frac{1}{N} \sum_{l = 1}^N f_{n_{k_l}}, 0\right) \leq \rho\left(\frac{1}{N} \sum_{l = 1}^M f_{n_{k_l}},0\right) + \sum_{l = M + 1} ^\infty \rho\left(f_{n_{k_l}},0\right).$$
Choosing $M$ large enough and then using the continuity of the multiplication with scalars at $0$ finishes the proof.   
\end{proof}

We call a quadratic form $q$ on a metrizable topological vector space $(V,\rho)$ {\em closed} if it is lower semicontinuous with respect to $\rho$-convergence, i.e., if $f_n \toq f$ implies 
$$q(f) \leq \liminf_{n \to \infty} q(f_n).$$
A quadratic form is called {\em closable} if it possesses a closed extension. For determining whether or not a quadratic form possesses a closed extension the following lemma is useful.
\begin{lemma}\label{lemma:characterization closability}
 A quadratic form $q$ on $(V,\rho)$ is closable if and only if it is lower semicontinuous on its domain, i.e., if for all sequences $(f_n)$ in $D(q)$ and $f \in D(q)$ the convergence $f_n \overset{q}{\to} f$ implies
 $$q(f) \leq \liminf_{n\to \infty} q(f_n).$$
 In this case, it possesses a smallest closed extension $\bar q:V \to [0,\infty]$ that is given by
 $$\bar q (f) = \begin{cases}
                 \lim\limits_{n \to \infty} q(f_n) &\text{if } (f_n) \text{ is }q\text{-Cauchy with } f_n \toq f,\\
                 \infty &\text{if there exists no such sequence}.
                \end{cases}
$$
\end{lemma}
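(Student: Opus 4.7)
The forward direction is immediate: if $\tilde q$ is a closed extension of $q$, then for $f_n, f \in D(q) \subseteq D(\tilde q)$ with $f_n \toq f$, the closedness of $\tilde q$ yields $q(f) = \tilde q(f) \leq \liminf \tilde q(f_n) = \liminf q(f_n)$. For the reverse direction I would define $\bar q$ by the displayed formula and then verify in turn that it is well-defined, extends $q$, is a quadratic form, is closed, and is the smallest closed extension of $q$.

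Well-definedness is the step where the lower semicontinuity hypothesis is actually used. If $(f_n),(g_n) \subseteq D(q)$ are both $q$-Cauchy with common $\rho$-limit $f$, set $h_n := f_n - g_n$; then $(h_n)$ is $q$-Cauchy in $D(q)$ with $h_n \toq 0 \in D(q)$. The trick is to fix $\eps > 0$, pick $N$ with $q(h_n - h_m) < \eps$ for $n,m \geq N$, fix $n \geq N$, and apply the on-domain lower semicontinuity to $h_n - h_m \toq h_n$ as $m \to \infty$: this gives $q(h_n) \leq \liminf_m q(h_n - h_m) \leq \eps$, so $q(h_n) \to 0$. The seminorm inequality $|q(f_n)^{1/2} - q(g_n)^{1/2}| \leq q(f_n - g_n)^{1/2}$ then forces $\lim q(f_n) = \lim q(g_n)$. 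Taking the constant sequence shows $\bar q$ extends $q$; that $\bar q$ is a quadratic form follows because sums, differences, and scalar multiples of $q$-Cauchy sequences remain $q$-Cauchy with the right $\rho$-limits, so the parallelogram identity and homogeneity pass to the limit. Minimality among closed extensions is then also immediate: if $\tilde q$ is another closed extension and $(f_n) \subseteq D(q)$ realizes $\bar q(f) = \lim q(f_n)$ with $f_n \toq f$, then $\tilde q(f) \leq \liminf \tilde q(f_n) = \bar q(f)$.

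The main obstacle is proving closedness of $\bar q$ on all of $V$. Given $(f_n) \subseteq V$ with $f_n \toq f$ and $\liminf \bar q(f_n) = c < \infty$, a diagonal extraction inside the defining Cauchy sequences for each $f_n$ produces $g_n \in D(q)$ with $g_n \toq f$ and $q(g_n) \to c$; however, $(g_n)$ need not itself be $q$-Cauchy, so the value of $\bar q(f)$ cannot be read off directly. To circumvent this I would pass to the Hilbert completion $H$ of the pre-Hilbert quotient $D(q)/\ker q$ equipped with the inner product coming from $q$. Since $(g_n)$ is norm-bounded in $H$, the classical Banach--Saks theorem supplies a subsequence whose Ces\`aro means are $H$-norm-convergent, and hence $q$-Cauchy back in $D(q)$. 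To ensure that these Ces\`aro means also $\rho$-converge to $f$, I would first apply Lemma~\ref{lemma:convergent cesaro means} to $(g_n)$ to pass to a subsequence along which every further Ces\`aro mean $\rho$-converges to $f$, and then extract the Banach--Saks sub-subsequence from within. The resulting averages $h_N$ are $q$-Cauchy in $D(q)$, satisfy $h_N \toq f$, and the triangle inequality for the seminorm $q^{1/2}$ combined with Ces\`aro averaging gives $q(h_N)^{1/2} \leq N^{-1}\sum_{l} q(g_{n_{k_l}})^{1/2} \to c^{1/2}$. This exhibits $f \in D(\bar q)$ with $\bar q(f) \leq c$, which is the required lower semicontinuity of $\bar q$ and completes the proof.
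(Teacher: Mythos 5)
Your proposal is correct and uses essentially the same machinery as the paper: the well-definedness argument via lower semicontinuity on the domain is identical, and the decisive step in both proofs is the combination of Lemma~\ref{lemma:convergent cesaro means} with the Banach--Saks theorem to replace a $q$-bounded, $\rho$-convergent sequence by $q$-Cauchy Ces\`aro means, followed by the seminorm estimate $q(h_N)^{1/2}\leq N^{-1}\sum_l q(g_{n_{k_l}})^{1/2}$. The only organizational difference is that the paper packages closedness and minimality together by introducing the lower semicontinuous envelope $\tilde q$ via the closure of the epigraph and proving $\bar q=\tilde q$, whereas you prove closedness of $\bar q$ directly (which requires your extra, correctly handled, diagonal extraction to descend from approximating elements of $D(\bar q)$ to elements of $D(q)$).
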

\begin{proof}
 The necessity of lower semicontinuity on the domain for the existence of a closed extension is clear and so it suffices to show that this condition is sufficient. Recall that a function $F:V \to (-\infty,\infty]$ is lower semicontinuous if and only if its epigraph $${\rm epi}\, F = \{(f,t) \in V \times \R \mid F(f) \leq t\}$$ is closed in the product space  $V \times \R$ (indeed this is true for functions on any metric space).
 
 Let $\overline{{\rm epi}\,q}$   the closure of ${\rm epi}\,q$ in $V \times \R$. We define $\tilde q:V \to [0,\infty]$ by 
 $$\tilde{q}(f) := \inf \{t \mid (f,t) \in \overline{{\rm epi}\,q}\},$$
 where we use  the convention $\inf \emptyset = \infty$. It is readily verified that ${\rm epi}\, \tilde q = \overline{{\rm epi}\,q}$ and so $\tilde q$ is lower semicontinuous. The lower semicontinuity of $q$ on its domain implies that $\tilde q (f) = q(f)$ for $f \in D(q)$  and that the domain of any closed extension of $q$  needs to contain the epigraph of $\tilde q$. 
 
 Let $\bar q$ as in the statement of the lemma. The lower semicontinuity of $q$ on its domain shows that $\bar q$ is well defined. Indeed, if $(f_n),(g_n)$ are $q$-Cauchy sequences in $D(q)$ with $f_n \toq f$, $g_n \toq f$, then $(f_n - g_n)$ is a $q$-Cauchy sequence with $f_n - g_n \toq 0$. Since $f_n - g_n \in D(q)$ and $q$ is lower semicontinuous on its domain, we obtain 
 $$|q(f_n)^{1/2} - q(g_n)^{1/2}| \leq q(f_n - g_n)^{1/2} \leq \liminf_{m \to \infty} q(f_n - g_n - f_m + g_m)^{1/2},$$ 
 and therefore
 $$\lim_{n\to \infty} q(f_n) = \lim_{n\to \infty} q(g_n).$$
 To finish the proof it now suffices to show $\bar q = \tilde q$, which implies that $\bar q$ is lower semicontinuous. That $\bar q$ is a quadratic form is  immediate from the definition of $\bar q$ and its minimality follows from the minimality of $\tilde q$.
 
 Let $f \in V$. We first prove $\tilde q(f) \leq \bar q(f)$. If $\bar q(f) = \infty$ there is nothing to show. Assume that there is a $q$-Cauchy sequence $(f_n)$ in $D(q)$ with $f_n \toq f$. The lower semicontinuity of $\tilde q$ and that it coincides with $q$ on $D(q)$ implies
 $$\tilde q (f) \leq \liminf_{n\to \infty} \tilde q(f_n) = \liminf_{n\to \infty} q(f_n) = \bar q (f).$$
 For proving the opposite inequality we can assume $\tilde q(f) < \infty$. Since $(f,\tilde q(f)) \in {\rm epi} \, \tilde q = \overline{{\rm epi}\,q}$,  the definition of $\overline{{\rm epi}\,q}$ yields that there exists a sequence $(f_n,t_n) \in {\rm epi} \,   q $ with $f_n \toq f$ and $t_n \to \tilde q(f)$. In particular, $q(f_n) \leq t_n$ and so the sequence $(f_n)$ is $q$-bounded. By Lemma~\ref{lemma:convergent cesaro means} and by the Banach-Saks theorem, see e.g. \cite[Theorem~A.4.1]{CF}, we can pass to a suitable subsequence and additionally assume that the sequence of Cesàro means 
 $$g_N := \frac{1}{N}\sum_{n=1}^N f_n $$
 is $q$-Cauchy and satisfies $g_N \toq f$.  Using the definition of $\bar q$ we obtain
 $$\bar q(f)^{1/2} = \lim_{N \to \infty} q(g_N)^{1/2} \leq \liminf_{N \to \infty} \frac{1}{N} \sum_{n = 1}^N q(f_n)^{1/2} = \tilde q(f)^{1/2}.  $$
 This finishes the proof.
\end{proof}

When the underlying topological vector space $(V,\rho)$ is complete, we can give another useful characterization of closed forms. To this end, we consider 
$$\rho_q :D(q) \times D(q) \to [0,\infty),\, \rho_q(f,g):= \rho(f,g) + q(f-g)^{1/2}.$$
Since $q^{1/2}$ is a seminorm, it is a translation invariant balanced metric on $D(q)$. The topology generated by $\rho_q$ is called the {\em form topology} and we call a Cauchy sequence with respect to $\rho_q$ a {\em Cauchy sequence with respect to the form topology}. The latter naming is a bit imprecise as there is no Cauchyness with respect to topologies. However, since the form topology is a vector space topology, it has a canonical uniform structure which coincides with the uniform structure induced by the metric $\rho_q$. 
\begin{lemma}\label{lemma:completeness v.s. lower semicontinuity}
 Let $(V,\rho)$ be complete. The following assertions are equivalent.
 \begin{itemize}
  \item[(i)] $q$ is a closed quadratic form on $(V,\rho)$. 
  \item[(ii)] $(D(q),\rho_q)$ is a complete metric space.
 \end{itemize}
\end{lemma}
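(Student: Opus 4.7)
The plan is to prove both implications directly from the definitions, using the Cauchy-Schwarz inequality for the seminorm $q^{1/2}$ in one direction, and the combination of Lemma~\ref{lemma:convergent cesaro means} with the Banach-Saks theorem in the other. The reason these are the natural tools is that $\rho_q$-convergence strengthens $\rho$-convergence, so $\rho$-completeness transfers routinely when $q$ behaves well on limits, while showing lower semicontinuity from $\rho_q$-completeness requires building a witness element inside $D(q)$.

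For (i) $\Rightarrow$ (ii), I would take a $\rho_q$-Cauchy sequence $(f_n)$ in $D(q)$. Since $\rho(f,g) \leq \rho_q(f,g)$, the sequence is $\rho$-Cauchy, hence $\rho$-convergent to some $f \in V$ by completeness of $(V,\rho)$. For fixed $n$, we have $f_n - f_m \overset{\rho}{\to} f_n - f$ as $m \to \infty$, and $q$-closedness yields $q(f_n - f) \leq \liminf_m q(f_n - f_m)$, which is arbitrarily small for large $n$ by the $q$-Cauchy property. Then $f = f_n - (f_n - f)$ lies in $D(q)$ (seminorm triangle inequality for $q^{1/2}$), and $f_n \to f$ in $\rho_q$.

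For (ii) $\Rightarrow$ (i), let $f_n \overset{\rho}{\to} f$ with $L := \liminf_n q(f_n) < \infty$ (the case $L = \infty$ being trivial). Pass to a subsequence realizing the liminf, so $q(f_n) \to L$; this subsequence is $q^{1/2}$-bounded. First apply Lemma~\ref{lemma:convergent cesaro means} to extract a further subsequence all of whose sub-subsequences have Cesàro means $\rho$-converging to $f$; then apply the Banach-Saks theorem (as in the proof of Lemma~\ref{lemma:characterization closability}) to extract a sub-subsequence whose Cesàro means $g_N := \frac{1}{N}\sum_{k=1}^N f_{n_k}$ are $q^{1/2}$-Cauchy. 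These $g_N$ are then $\rho_q$-Cauchy, and by hypothesis converge in $\rho_q$ to some $g \in D(q)$; since $\rho_q$-convergence implies $\rho$-convergence, $g = f$, so $f \in D(q)$. Finally, the seminorm inequality $q(g_N)^{1/2} \leq \frac{1}{N}\sum_{k=1}^N q(f_{n_k})^{1/2}$ combined with $q(f_{n_k})^{1/2} \to L^{1/2}$ and $q(g_N) \to q(f)$ gives $q(f)^{1/2} \leq L^{1/2}$, which is the desired lower semicontinuity.

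The main obstacle is the second implication: one cannot directly take a $\rho$-limit and pair it with a $q$-Cauchy argument because the original sequence $(f_n)$ need not be $q$-Cauchy, only $q^{1/2}$-bounded. The trick of combining Lemma~\ref{lemma:convergent cesaro means} with Banach-Saks to construct a single subsequence whose Cesàro means are simultaneously $\rho$-convergent to $f$ and $q^{1/2}$-Cauchy is exactly what the metrizable (non-locally-convex) setting forces upon us, since we cannot appeal to weak convergence arguments as we would on $L^2(m)$.
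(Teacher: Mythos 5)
Your proof is correct and follows essentially the same route as the paper: for (i)~$\Rightarrow$~(ii) you apply lower semicontinuity to the differences $f_n - f_m \overset{\rho}{\to} f_n - f$, and for (ii)~$\Rightarrow$~(i) you combine Lemma~\ref{lemma:convergent cesaro means} with the Banach--Saks theorem to produce Ces\`aro means that are simultaneously $q$-Cauchy and $\rho$-convergent to $f$, exactly as in the paper. No gaps.
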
 
\begin{proof}
 (i) $\Rightarrow$ (ii): Let $(f_n)$ in $D(q)$ Cauchy with respect to $\rho_q$. Since $(V,\rho)$ is complete it has a $\rho$-limit $f \in V$. The lower semicontinuity of $q$ implies $f \in D(q)$ and
 $$q(f - f_n) \leq \liminf_{m \to \infty} q(f_m - f_n).$$
 This shows $f_n \to f$ with respect to $\rho_q$ and the completeness of is proven.
 
 (ii) $\Rightarrow$ (i): Let $(f_n)$ a sequence in $V$ and $f \in V$ with $f_n \overset{\rho}{\to} f$. We prove $q(f) \leq \liminf_n q(f_n)$. After passing to a suitable subsequence we can assume
 $$\liminf_{n \to \infty} q(f_n) = \lim_{n\to \infty}q(f_n) < \infty.$$
 The Banach-Saks theorem and Lemma~\ref{lemma:convergent cesaro means}  imply that after passing to a further subsequence we can assume that the sequence of Cesàro means 
 $$g_N := \frac{1}{N}\sum_{n=1}^N f_n$$
 is $q$-Cauchy and satisfies $g_N \overset{\rho}{\to} f$, as $N \to \infty$. The completeness of $(D(q),\rho_q)$ yields $g_N \to f$ with respect to $q$ and we obtain 
 $$q(f)^{1/2} = \lim_{N \to \infty} q(g_N)^{1/2} \leq \liminf_{N\to \infty} \sum_{n = 1}^N q(f_n)^{1/2} = \lim_{n\to \infty} q(f_n)^{1/2}.$$
 This finishes the proof.
\end{proof}

The following lemma on weakly convergent sequences is quite useful, for an $L^2$-version, see e.g. \cite[Lemma I.2.12]{MR}.

\begin{lemma} \label{lemma:existence of a weakly convergent subnet}
 Let $q$ be a closed quadratic form on $(V,\rho)$. Let $(f_n)$ be a $q$-bounded sequence in $D(q)$ and let $f \in V$  with $f_n \toq f$. Then $f \in D(q)$ and $(f_n)$ converges $q$-weakly $f$. If, additionally,
 $$\limsup_{n\to \infty} q(f_n) \leq q(f),$$
 then $f_n \to f$ with respect to $q$.
\end{lemma}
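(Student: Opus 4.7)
The plan is to transfer the problem to a genuine Hilbert space by taking the seminorm quotient and completion of $(D(q), q^{1/2})$, and then to exploit the Banach--Saks theorem together with Lemma~\ref{lemma:convergent cesaro means} to identify weak limits.

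First, the membership $f \in D(q)$ is immediate: since $(f_n)$ is $q$-bounded and $q$ is lower semicontinuous under $\toq$, we have $q(f) \leq \liminf_n q(f_n) < \infty$. To get $q$-weak convergence, I would consider the inner product space $D(q)/\ker q$ with inner product induced by $q$, let $H$ be its completion, and let $\pi\colon D(q) \to H$ be the canonical map. By construction $\|\pi(g)\|_H^2 = q(g)$, so $(\pi(f_n))$ is bounded in $H$. Weak convergence $\pi(f_n) \rightharpoonup \pi(f)$ is equivalent to $q(f_n,g) \to q(f,g)$ for every $g \in D(q)$, so this is precisely the $q$-weak convergence that is claimed.

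To establish this weak convergence, I would use the standard subsequence principle: it suffices to show every subsequence has a further subsequence weakly convergent to $\pi(f)$. Given an arbitrary subsequence, reflexivity of $H$ furnishes a weakly convergent subsubsequence $\pi(f_{n_k}) \rightharpoonup \xi$; applying Banach--Saks in $H$ yields a further subsequence whose Cesàro means converge in $H$-norm to $\xi$; finally, Lemma~\ref{lemma:convergent cesaro means} gives one more subsequence whose Cesàro means $\bar f_N$ also converge in $\rho$ to $f$. Now $\pi(\bar f_N) \to \xi$ in $H$, so $(\bar f_N)$ is $q$-Cauchy, while $\bar f_N \toq f$. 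Applying the closedness of $q$ (via the formula for the closure in Lemma~\ref{lemma:characterization closability}, or equivalently by the standard estimate $q(\bar f_N - f) \leq \liminf_M q(\bar f_N - \bar f_M)$), one concludes $q(\bar f_N - f) \to 0$, i.e., $\pi(\bar f_N) \to \pi(f)$ in $H$. Hence $\xi = \pi(f)$. The hard part of the argument is precisely this identification, because the sequence itself need not be $q$-Cauchy; it is the simultaneous use of Banach--Saks (to get Cauchyness of Cesàro means in $H$) and Lemma~\ref{lemma:convergent cesaro means} (to keep $\rho$-convergence alive through the Cesàro averaging) that makes closedness of $q$ applicable.

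For the final assertion, under the extra hypothesis $\limsup_n q(f_n) \leq q(f)$, I would use the identity
\[
 \|\pi(f_n) - \pi(f)\|_H^2 = \|\pi(f_n)\|_H^2 - 2\langle \pi(f_n), \pi(f)\rangle_H + \|\pi(f)\|_H^2,
\]
take $\limsup_n$, and invoke the weak convergence just proved to obtain $\limsup_n \|\pi(f_n)-\pi(f)\|_H^2 \leq 0$. Translated back, this reads $q(f_n - f) \to 0$, which is the asserted convergence $f_n \to f$ with respect to $q$.
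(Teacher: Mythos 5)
Your proof is correct, but it takes a genuinely different route from the paper's. The paper avoids the completion Hilbert space, weak sequential compactness and the Banach--Saks theorem altogether: after reducing to $f=0$ (by replacing $(f_n)$ with $(f-f_n)$), it tests lower semicontinuity against the perturbed sequences $g-\alpha f_n \toq g$ for $g\in D(q)$ and $\alpha>0$, expands to get $q(g)\leq q(g)-2\alpha \limsup_{n}q(f_n,g)+\alpha^2 M$ with $M$ a $q$-bound for $(f_n)$, divides by $\alpha$ and lets $\alpha\to 0+$ to conclude $\limsup_n q(f_n,g)\leq 0$; replacing $g$ by $-g$ then yields $q(f_n,g)\to 0$. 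This is shorter, invokes the closedness hypothesis exactly once, and needs no auxiliary space. Your argument instead transfers the problem to the completion $H$ of $D(q)/\ker q$ and identifies the weak limit there by the same Ces\`aro-mean-plus-closedness device that the paper employs in the proof of Lemma~\ref{lemma:characterization closability}; this is a valid and arguably more conceptual reduction to standard Hilbert space theory, at the price of the heavier machinery. One point you should make explicit: after extracting the Banach--Saks subsequence in $H$ you pass to yet another subsequence via Lemma~\ref{lemma:convergent cesaro means}, and for the Ces\`aro means of that final subsequence to still converge in $H$-norm to $\xi$ you need the hereditary form of Banach--Saks (Ces\`aro means of every further subsequence converge), or else you should apply Lemma~\ref{lemma:convergent cesaro means} first and Banach--Saks second, exploiting that the former lemma is stated hereditarily. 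The proof of the final strong-convergence assertion coincides with the paper's.
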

\begin{proof}
 We first show the statement on weak convergence. The lower semicontinuity of $q$ and the $q$-boundedness of $(f_n)$ imply $f \in D(q)$. Hence, by considering the sequence $(f-f_n)$ instead of $(f_n)$, we can assume $f = 0$. 
 
 The boundedness of $(f_n)$ implies that for each $g \in D(q)$ we have 
 $$- \infty < \liminf_{n\to \infty} q(f_n,g) \leq \limsup_{n\to \infty} q(f_n,g)  < \infty. $$
 Let $M \geq 0$ such that $q(f_n) \leq M$ for each $n$. Since the vector space operations are continuous with respect to $\rho$, for  $\alpha > 0$ and $g \in D(q)$  we have $g-\alpha f_n \toq g$.  The lower semicontinuity of $q$   yields
 \begin{align*}
  q(g) &\leq \liminf_{n \to \infty} q(g - \alpha f_n)\\
  &= \liminf_{n \to \infty}  \left(q(g) - 2\alpha q(f_n,g) + \alpha^2 q(f_n) \right) \\
  &\leq \liminf_{n \to \infty}  \left(q(g) - 2\alpha q(f_n,g) + \alpha^2 M \right) \\
  &=q(g) - 2\alpha \limsup_{n \to \infty} q(f_n,g) + \alpha^2 M.
 \end{align*}
Hence, for all $\alpha > 0$ we obtain $$2\limsup_{n \to \infty} q(f_n,g) \leq \alpha M,$$ which implies $$\limsup_{n \to \infty} q(f_n,g) \leq 0.$$  Since $g \in D(q)$ was arbitrary, we also have $\limsup_{n \to \infty} q(f_n,-g) \leq 0$ and conclude 
$$0 \leq  \liminf_{n \to \infty} q(f_n,g) \leq \limsup_{n \to \infty} q(f_n,g) \leq 0.$$
This shows weak convergence. The strong convergence under the additional condition $\limsup_{n\to \infty} q(f_n) \leq q(f)$ follows from the weak convergence and the identity
$$q(f-f_n) =  q(f) + q(f_n)  - 2 q(f,f_n).$$
This finishes the proof.
\end{proof}

\section{A lemma on monotone forms} \label{appendix:monotone forms} We call a quadratic form $q:L^0(m) \to [0,\infty]$ {\em nonnegative definite} if for $f,g \in D(q)$ the inequality $fg \geq 0$ implies $q(f,g) \geq 0$. We call it {\em monotone} if for $f,g \in D(q)$ the inequality $|f| \leq |g|$ implies $q(g) \leq q(f)$. The following lemma shows that monotone and nonnegative definite quadratic forms coincide when their domain is a lattice in the sense of the natural order on $L^0(m)$, i.e.,  if $f,g \in D(q)$ implies $f \wedge g \in D(q)$ and $f \vee g \in D(q)$.

\begin{lemma}\label{lemma:characterization monotonicity}
 Let $q$ be a quadratic form on $L^0(m)$ such that $D(q)$ is a lattice. The following assertions are equivalent.
 \begin{itemize}
  \item[(i)] $q$ is monotone.
  \item[(ii)] $q$ is nonnegative definite.
 
 \end{itemize}
\end{lemma}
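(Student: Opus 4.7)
The plan is to prove both implications by polarization, exploiting the pointwise identity $(f+g)^2 - (f-g)^2 = 4fg$.

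First I would dispatch (i) $\Rightarrow$ (ii), which does not actually require the lattice hypothesis. Given $f, g \in D(q)$ with $fg \geq 0$, the identity above shows $|f-g| \leq |f+g|$ pointwise, so monotonicity yields $q(f-g) \leq q(f+g)$, and polarization then gives $4 q(f,g) = q(f+g) - q(f-g) \geq 0$.

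For (ii) $\Rightarrow$ (i), the lattice hypothesis enters. The first step is to reduce to nonnegative functions. For any $f \in D(q)$, the lattice property places $f_\pm := (\pm f) \vee 0$ in $D(q)$, and because $f_+ f_- = 0$ is both $\geq 0$ and $\leq 0$ (since $f_+(-f_-) = -f_+ f_- = 0 \geq 0$), nonnegative definiteness applied to the pairs $(f_+, f_-)$ and $(f_+, -f_-)$ pins down $q(f_+, f_-) = 0$. Expanding $q(f_+ \pm f_-)$ then shows $q(f) = q(f_+) + q(f_-) = q(|f|)$, and the same for $g$. Since $|f|, |g| \in D(q)$ and $|f| \leq |g|$, it suffices to treat the case $0 \leq f \leq g$ in $D(q)$.

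In this reduced case, set $h := g - f \in D(q)$ (using that $D(q)$ is automatically a vector subspace). Since $fh \geq 0$, the hypothesis gives $q(f, h) \geq 0$, and $q(h) \geq 0$ always, so
$$q(g) = q(f + h) = q(f) + 2 q(f, h) + q(h) \geq q(f).$$
The main obstacle, such as it is, is merely the bookkeeping needed to ensure that $f_\pm$, $|f|$, $|g|$ and $g - f$ all lie in $D(q)$ so that the values of $q$ make sense and its bilinear structure may be used freely; the lattice hypothesis together with the fact that $D(q)$ is a subspace supplies precisely this, and it is the only place in the argument where the lattice assumption is genuinely consumed.
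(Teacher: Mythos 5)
Your proof is correct and follows essentially the same route as the paper: polarization for (i) $\Rightarrow$ (ii), and for the converse the reduction $q(f)=q(|f|)$ via $q(f_+,f_-)=0$ followed by an application of nonnegative definiteness to the pair $(|f|,\,|g|-|f|)$. The only (cosmetic) difference is in the last step, where you expand $q(f+h)$ with $h=g-f$ and use $q(h)\geq 0$, while the paper chains the two inequalities $q(|f|)\leq q(|g|,|f|)\leq q(|g|)$; these are the same computation.
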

\begin{proof}
 (i) $\Rightarrow$ (ii): For  $f,g \in D(q)$ with $fg \geq 0$ we have $|f+g| \geq |f-g|$. The monotonicity of $q$ implies
 $$q(f) + q(g) - 2q(f,g) = q(f-g) \leq q(f+g) = q(f) + q(g) + 2q(f,g)$$
 proving that it is nonnegative definite.
 
 (ii) $\Rightarrow$ (i): We first prove that $q(f) = q(|f|)$ for $f \in D(q)$. Since $D(q)$ is a lattice, $f_+ = f \vee 0$ and $f_- = (-f)\vee 0$ belong to $D(q)$. We obtain
 $$q(f) = q(f_+) + q(f_-) - 2q(f_+,f_-) \text{ and } q(|f|) = q(f_+) + q(f_-) + 2q(f_+,f_-). $$
 Since $f_+f_- = 0$, the positive definiteness of $q$ yields $q(f_+,f_-) = 0$, proving $q(f) = q(|f|)$.
 
 Now, let $f,g \in D(q)$ with $|f| \leq |g|$ be given. Using the inequalities $|f|(|g|-|f|) \geq 0$ and $|g|(|g|-|f|) \geq 0$, the positive definiteness of $q$ implies
 $$q(f) = q(|f|) \leq q(|g|,|f|) \leq q(|g|) = q(g).$$
This finishes the proof.
\end{proof}


 \bibliographystyle{plain}
 
\bibliography{literatur}

\end{document}